\def\D{\ensuremath{\mathbb D} }
\def\R{\ensuremath{\mathbb R} }
\def\C{\ensuremath{\mathbb C} }
\def\N{\ensuremath{\mathbb N} }
\newcommand{\diam}{\text{diam}}
\newcommand{\supp}{\operatorname{supp}}
\def\A{\ensuremath{\mathbb A} }
\numberwithin{equation}{section}
\newcommand*\M[1]{\textcolor{black}{#1}} 
	\definecolor{applegreen}{rgb}{0.55, 0.71, 0.0}
\begin{document}
\title{Simply Connected Wandering Domains of Small Order Entire Functions}
\author{Adi Gl\"ucksam and Leticia Pardo-Sim\'{o}n }
\date{}
\maketitle

\begin{abstract}
We show that any bounded, simply connected domain with analytic boundary can be realised as a wandering domain of an entire function of any prescribed order in $(0, 1)$. Extending results of Boc Thaler, our construction simultaneously prescribes the domain and the exact order of the map. In particular, we produce the first examples of entire functions with bounded simply connected wandering domains of each order in $\bigl(0,\tfrac12\bigr)$.
\end{abstract}

\section{Introduction}\label{sec:intro}
Let $f\colon\C\to\C$ be an entire function and denote by $f^n$ its $n$-th iterate, $n\in \N$. The \emph{Fatou set}, $F(f)$, consists of the points $z\in\C$ for which the family $\{f^n\}_{n\in\N}$ is normal in a neighbourhood of $z$, and the \emph{Julia set} is its complement, $J(f):=\C\setminus F(f)$. We refer to the surveys \cite{Bergweiler_1993,Schleicher_entire,Sixsmith_2018,Bergweiler_Rempe_escaping_survey_25} for background on iteration of entire functions. A connected component of the Fatou set, $U\subset F(f)$, is called a \emph{wandering domain} if $f^n(U)\cap f^m(U)=\emptyset$ whenever $n\neq m$. By Sullivan’s no-wandering-domain theorem, \cite{Sullivan_noWD_85}, rational maps, and, in particular, polynomials, admit no wandering domains, i.e., this phenomenon is inherently transcendental.

The first example {of an entire map with a} wandering domain was constructed by Baker in \cite{Baker_wandering_76}. Since then, many examples have been produced by various methods. Notably, constructions based on the Eremenko–Lyubich approximation scheme \cite{EL_87} —rooted in classical approximation theorems of Runge and Arakelyan—offer significant flexibility. This flexibility allows one to prescribe features such as the family of iterates having finite limit functions (yielding \textit{oscillating} wandering domains) or even prescribe the exact shape of the domains, e.g. \cite{BocThaler_2021, MartiPete_JAMS_2025}. The trade-off of these methods, however, is that typically they provide little quantitative information about the map itself—most prominently about its set of singular values (singularities of the inverse) or its  \emph{order of growth}, defined by
\[
\rho(f)=\limsup_{r\to\infty}\frac{\log\log M_f(r)}{\log r}, \qquad 	M_f(r):=\max_{\abs z=r}\abs{f(z)}.
\]

By contrast, constructions based on quasiconformal techniques (e.g. \cite{Kisaka_08,Bishop_acta_2015,Marti_Pete_2020,Burkart_25}), \textit{lifts} of periodic components (e.g. \cite{Herman_84,Fagella_henriksen_Teich_09}), or infinite products (e.g. \cite{Baker_85_mc_finiteorder,Bishop_dim1_18,Sixsmith_2018}) can produce entire functions with wandering domains and bounded singular sets and/or finite order; however, they typically offer limited control over the geometry of the domains themselves.

In keeping with a central theme of transcendental dynamics—identifying minimal transcendental features, such as growth, that still permit non-rational behaviour (here, wandering domains)—a natural goal is to reduce the order of growth as much as possible without losing geometric control. In this work we achieve simultaneous control on both the growth-rate and the geometry: we prescribe both the analytic boundary of any bounded simply connected wandering domain and the (arbitrarily small) order of growth of the corresponding entire function.

\begin{thm}\label{thm:intro}
	For every $\alpha\in\bb{0,1}$ and every bounded simply connected domain, $U$, with analytic boundary there exists an entire function, $f$, of order $\alpha$ for which $U$ is a wandering domain and the iterates, $f^n\vert_{U}$, are univalent.
\end{thm}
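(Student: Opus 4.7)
My plan is to work within the Eremenko--Lyubich approximation framework, grafting onto it a quantitative growth control. First I would choose basepoints $\{a_n\}_{n\geq 0}\subset\C$ with $a_0=0$ and $|a_n|\to\infty$ along some ray, at a rate slow enough to keep the final order below $\alpha$ yet fast enough to make the translates $U_n:=a_n+U$ pairwise disjoint and mutually well-separated. On $\bigsqcup_n U_n$ the natural target is the piecewise translation $g(z):=z+(a_{n+1}-a_n)$ for $z\in U_n$, which is tautologically a univalent bijection $U_n\to U_{n+1}$.

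Next, using a quantitative Arakelyan-- or Runge-type theorem I would produce an entire $f$ with $|f-g|<\varepsilon_n$ on $U_n$, the $\varepsilon_n$ chosen so small that $f(U_n)\subset U_{n+1}$ and $f$ is univalent in a neighbourhood of each $\overline{U_n}$. Iteration then yields $f^n(U)\subset U_n$, so $U$ sits inside a wandering Fatou component $V$ and $f^n|_U$ is univalent. To force $V=U$ rather than a slight perturbation, I would exploit the analyticity of $\partial U$: by Schwarz reflection, $g$ extends holomorphically across each $\partial U_n$, so one can perform the approximation on slightly thickened sets and then argue, using hyperbolic contraction of inverse branches of $f$, that $\partial U\subset J(f)$.

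The principal obstacle is producing an entire approximant of \emph{prescribed} order $\alpha$: classical Arakelyan--Runge theorems give no growth control, and naive implementations produce $f$ of infinite order. My strategy would be to write $f=\sum_k P_k$ as a telescoping sum of polynomial correctors on an expanding exhaustion $\{|z|\leq R_k\}$, choosing $\deg P_k$ and the accuracy at each stage so that $\log M_f(r)\leq C\, r^\alpha$. This requires a delicate balance: the $\varepsilon_n$ must decay fast enough for the dynamics to close up on each $U_n$, yet the polynomial degrees needed to achieve that accuracy must not exceed $R_k^\alpha$ up to logarithmic factors. The matching lower bound $\rho(f)\geq\alpha$ would follow from the forced growth $|f(a_n)|\approx|a_{n+1}|$ once the $|a_n|$ are chosen to saturate the target rate. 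I expect this quantitative approximation lemma---essentially a refinement of Arakelyan's theorem with explicit growth bounds compatible with finite order---to be the technical heart of the construction, and the step where the restriction $\alpha\in(0,1)$ (rather than $\alpha\in(0,\tfrac12)$ as in earlier work) is genuinely used.
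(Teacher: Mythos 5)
Your proposal correctly identifies the broad paradigm (approximate a model translation map, iterate, control growth), but it has a genuine gap at the single most delicate point: ensuring that the wandering component is \emph{exactly} $U$ rather than some larger set.

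Your mechanism for forcing $V=U$ --- ``exploit analyticity of $\partial U$, extend $g$ by Schwarz reflection, then use hyperbolic contraction of inverse branches to show $\partial U\subset J(f)$'' --- does not work, because there is no hyperbolicity to exploit. Your model map $g$ is a pure translation on a thickened neighbourhood $U_n^{+\varepsilon}$ of each $U_n$, so the approximant $f$ is nearly a translation there; its derivative is close to $1$ on both sides of $\partial U_n$, and inverse branches neither contract nor expand. Without a dynamical dichotomy distinguishing $\partial U$ from its neighbours, the entire thickened collar will be absorbed into the Fatou component containing $U$, and the wandering domain will strictly contain $U$. The paper's resolution (following Boc Thaler) is an extra ingredient that your proposal is missing: a second target, an attracting basin around a far point $-\tau_1$, together with finite collections of points $Q_k\subset\partial U_k$ sitting on shrinking ``layers'' that accumulate on $\partial U$ from outside, whose orbits are driven into that basin while every point of $U$ escapes to infinity. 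Every neighbourhood of a boundary point then contains both bounded and unbounded orbits, which is what pins $\partial U$ inside $J(f)$. Realising this simultaneously with small growth is exactly the tension the paper has to manage: the layers around $\partial U$ shrink, so the approximation must become correspondingly precise near them, which pushes the growth up; balancing this against the target order $\alpha$ is where the technical work lies. Your proposal has nothing corresponding to this mechanism, and I do not see how your outline can be repaired without adding it.

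Two further points, less critical but worth flagging. First, even granting $f(U_n)\approx U_{n+1}$, a translate-then-approximate scheme accumulates geometric error: $f^n(U)$ drifts away from $U_n$ unless you re-normalise. The paper repeatedly ``straightens'' $f_{k-1}^k(U_{k+1})$ back to a round disk via the Riemann map $\varphi_U^{-1}$ (Lemma~\ref{lem:riemann_approx} and Lemma~\ref{lem:small_disks}) precisely to keep the geometry controlled across infinitely many steps; your purely translational model has no analogue of this. Second, the paper's growth control does not come from counting polynomial degrees in a telescoping Arakelyan-type sum but from Hörmander's $\bar\partial$-theorem with a carefully designed subharmonic weight behaving like $\tau_k|z|^\alpha$ away from the approximation zones; the restriction $\alpha<1$ enters because $|z|^\alpha$ must be subharmonic, not (as you guess) as a degree-counting constraint. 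Your telescoping-polynomial idea is not obviously wrong as a replacement, but you have not shown that it can simultaneously meet the rapidly shrinking accuracy demanded near the layers and stay below order $\alpha$ --- and without the layers themselves, the question is moot.
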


We now place the result within its broader context. Wandering domains for entire maps may be simply connected or multiply connected. The multiply connected case is relatively well understood: multiply connected wandering domains are necessarily bounded, their iterates form a nested family of annuli tending to infinity and the dynamics are power-map-like, i.e., broadly speaking, they can really only posses one type of geometry and one type of dynamics (see, e.g., \cite{Bergweiler_PG_multiply_13} for more details). Furthermore, building on his original example, Baker showed in 1985 that for every $\rho\in[0,\infty]$ there exists an entire function of order $\rho$ with a multiply connected wandering domain \cite{Baker_85_mc_finiteorder}. Bishop later gave, via an infinite product, an example of an entire map of order $0$ with multiply connected wandering domains whose boundaries form a countable collection of $C^1$ curves, being this the first example of an entire function with Julia set of Hausdorff dimension $1$, \cite{Bishop_dim1_18}.

By contrast, simply connected wandering domains display a much wider range of phenomena. For example, they could be bounded or unbounded (\M{see}, e.g., \cite{Herman_84,Lazebnik_2021}), the set of limit functions of $\bset{f^n\vert_{U}}$ may include finite constant ones, i.e., they can be \emph{oscillating} (first example in \cite{EL_87}), and they arise for functions in class~$\mathcal{B}$ (entire functions with \M{a} bounded singular set), see \cite{Bishop_acta_2015} followed by \cite{Lazebnik_2017,Fagella_2019,Marti_Pete_2020}. 
Moreover, their internal dynamics admit several distinct types \M{as seen in} \cite{Benini_2022,Evdoridou_2023,Evdoridou_fast_2023}; see also \cite{Gustavo} for analogous results in the meromorphic setting.

The possible geometries of simply connected wandering domains were recently unravelled by Boc Thaler~\cite{BocThaler_2021}. They constructed for any bounded, simply connected, regular open set, $U$, an entire function realising $U$ as an escaping (or oscillating) univalent wandering domain. Building on these ideas, Martí-Pete, Rempe and Waterman \cite{MartiPete_JAMS_2025} further broadened the class of admissible shapes by realising escaping wandering domains for any bounded simply connected domain $D$ such that $\C\setminus \overline{D}$ has an unbounded connected component $W$ with $\partial W = \partial D$, thus supporting highly intricate geometries such as Lakes of Wada continua; see also \cite{MartiPete_merom_2025} for meromorphic analogues. However, none of these constructions provides any bound on the growth-rate of the entire map.

Theorem \ref{thm:intro} requires the analyticity of $\partial U$. \M{Such sets are bounded, simply connected, regular open sets}, i.e., they fall within  Boc Thaler's setting in \cite{BocThaler_2021} because of the conformal extendability of the Riemann map across~$\partial U$ (see \cite[Prop.~3.1]{pommerenke_boundary}). In particular, \cite{BocThaler_2021} shows that boundaries of bounded simply connected wandering domains can be made arbitrarily well-behaved, in contrast to invariant Fatou components, whose boundaries are typically highly non-smooth, see \cite{Azarina_89,Baranski_2025}. Theorem~\ref{thm:intro} demonstrates that this contrast in boundary regularity between wandering domains and invariant Fatou components persists even for functions of very slow growth.

Looking for the lowest possible growth-rate of entire maps admitting simply connected wandering domains, Sixsmith \cite{dave_fast12} constructed an entire function of order zero, defined via an infinite product, that possesses a simply connected wandering domain.  For higher growth, Baker \cite[Theorem~5.2]{Baker_wandering84} produced simply connected wandering domains for every order $\rho\ge1$, and Martí-Pete–Shishikura constructed functions in class $\mathcal B$ of orders $\frac p2$ for all $p\in \N$, in particular of order $\frac12$—the smallest possible within class $\mathcal B$. To the best of our knowledge, our result gives the first examples with \emph{bounded} simply connected wandering domains for all orders strictly between $0$ and $1$, thereby filling the gaps between the order-$0$ construction of Sixsmith, the order-$\frac12$ class-$\mathcal B$ examples of Martí-Pete and Shishikura, and Baker’s examples for $\rho\ge1$.

\begin{cor}For every $\rho\in [0,\infty]$ there exists and entire function of order $\rho$ with a bounded simply connected wandering domain. 
\end{cor}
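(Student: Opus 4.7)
The plan is to assemble the corollary from Theorem~\ref{thm:intro} together with three complementary constructions from the literature, each covering a different sub-range of orders, so that no new analysis is required.

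For $\rho\in(0,1)$, I would apply Theorem~\ref{thm:intro} to $U=\D$ (or to any fixed bounded simply connected domain with analytic boundary); this directly produces an entire function of order exactly $\rho$ having $\D$ as a bounded simply connected wandering domain. For $\rho=0$, the infinite-product example of Sixsmith in \cite{dave_fast12} supplies an entire function of order zero with a bounded simply connected wandering domain. For $\rho\in[1,\infty)$, I would invoke \cite[Theorem~5.2]{Baker_wandering84}, which realises, at every prescribed order $\rho\ge 1$, an entire function with a bounded simply connected wandering domain. Finally, for $\rho=\infty$, any of the class-$\mathcal B$ or approximation-based constructions of bounded simply connected wandering domains, such as those in \cite{Bishop_acta_2015} or \cite{Marti_Pete_2020}, automatically produces an example of infinite order.

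There is no genuine mathematical obstacle: the only point that needs to be checked is that the wandering domain produced in each cited source is indeed both \emph{bounded} and \emph{simply connected}, rather than unbounded or multiply connected; this is routine on inspection of the constructions themselves. Once this verification is carried out, concatenating the four sub-ranges $\{0\}$, $(0,1)$, $[1,\infty)$ and $\{\infty\}$ exhausts $[0,\infty]$, and the corollary follows at once.
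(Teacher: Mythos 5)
This is the decomposition the paper intends: the corollary carries no explicit proof but directly follows the paragraph crediting Sixsmith with the order-$0$ example and Baker with orders $\rho\ge1$, while Theorem~\ref{thm:intro} is designed to supply $(0,1)$, so you are assembling the pieces exactly as the authors do. One inaccuracy in your $\rho=\infty$ case: the paper itself records \cite{Marti_Pete_2020} as producing class-$\mathcal B$ functions of order $p/2$ for $p\in\N$, all of them \emph{finite}, so that reference cannot be invoked for order $\infty$; the $\rho=\infty$ case should instead rest on \cite[Theorem~5.2]{Baker_wandering84} (if its range of orders is closed at $\infty$) or on a construction that is explicitly of infinite order. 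Beyond that, the verification you defer as routine, namely that the wandering domains in the order-$0$ and order-$\ge1$ sources are genuinely \emph{bounded} and not merely simply connected, is the load-bearing part of the argument: boundedness is precisely what makes the statement the simply connected analogue of Baker's multiply connected theorem, and precisely what Theorem~\ref{thm:intro} was engineered to guarantee on the range $(0,1)$, so it deserves to be checked and stated rather than asserted.
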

This is the simply connected analogue of Baker’s theorem for multiply connected wandering domains \cite{Baker_85_mc_finiteorder}.

We remark that, in the \emph{unbounded} case, Baker’s 1981 conjecture \M{in} \cite{Baker_conjecture_81} asserts \M{the non-existence} of unbounded wandering domains for functions of order $<\tfrac12$; this remains open. Examples of order $>\tfrac12$ appeared in \cite{Evdoridou_fast_2023}; see also \cite{Nicks_PG_Baker18} for the case of real functions with real zeros.

The proof of Theorem \ref{thm:intro}, built on Hörmander’s $\bar\partial$-techniques, combines the approximation paradigm of \cite{BocThaler_2021,MartiPete_JAMS_2025}, with the approximation tools first used in \cite{Evdoridou_fast_2023} that allowed adding the quantitative control needed \M{to prescribe} small order. In the spirit of Boc Thaler, we construct a sequence of entire maps converging locally uniformly and use designated sets of points, distributed in `layers' accumulating at $\partial U$, whose forward images are mapped into attracting basins in order to shape the wandering domain. The two key new ingredients are \M{using a method that allows us to simultaneously control the approximation} error and the \emph{growth}, and a \emph{geometric} quantitative `straightening-to-a-disk' step near almost round analytic boundaries. A carefully chosen parameter scheme and a diagonal convergence argument then yield the desired map with prescribed order and wandering dynamics.

In order to obtain the prescribed set as a wandering domain, we need to precisely preserve the distinction between images of the set and its complement under iterations. We must therefore use conformal maps to map our wandering domain forward. On the other hand, there is no conformal map that changes the modulus of annuli. This implies that the distance between iterations of different `layers' surrounding the set, which are mapped to an attracting basin, and iterations of the set, which are translated forward, decreases, which naturally increases the growth-rate. It requires meticulous analysis to balance between the growth-rate bounds and the error accumulated along different iterates, resulting in a highly technical and {long} proof.

\noindent\textbf{Structure of the article.}
Section 2 sets notation and recalls the $\bar{\partial}$–approximation framework, collecting the subharmonic tools (gluing and puncture lemmas) and quantitative Riemann–mapping estimates used later. Section~3 proves Lemma 3.1, a `straightening and separating' lemma, which, for a given bounded simply connected domain with boundary that is nearly round, yields an entire function that uniformly `straightens' the domain with derivative control, sends selected small disks near the boundary into a tiny neighbourhood of a fixed far point, and satisfies an explicit global growth bound. Section 4 assembles the sequence: we fix the parameter scheme, carry out the base `straightening' step, perform the inductive step, control the attracting basin, and derive the growth estimates. Section 5 passes to the limit and completes the proof of Theorem \ref{thm:intro}.
\section{Preliminaries}\label{sec:pre}

\subsection{Notation}
\begin{itemize}
	\item We write $f^n$ for the $n$-th iterate of a function $f$. For $x>0$ and $n\in\N$, $\log^n x$ always means $\log$ to the power~$n$, and not iterated logarithm.
	\item[$\bullet$] Given a domain $\Omega\subset\C$ and $\varepsilon>0$, let $\Omega^{+\varepsilon}:=\{z\in\C:\operatorname{dist}(z,\Omega)<\varepsilon\}$, where $\operatorname{dist}(\cdot,\cdot)$ denotes the Euclidean distance.
	\item[$\bullet$]  We write $dm$ for Lebesgue's measure on the complex plane. 
	\item [$\bullet$] For $z\in\C$ and $r>0$, let $B(z,r)$ be the open disk centred at $z$ with radius $r$; we abbreviate the unit disk by~$\D$.
	\item[$\bullet$] Given a set $T\subset\C$ and $a\in\C$, \M{the translation of $T$ by $a$ is denoted} by $T+a:=\{z+a:z\in T\}$, and its scaling by $a$ \M{is denoted }by $aT:=\{a\,z:z\in T\}$.
	\item[$\bullet$] For $0<r<R$, the annulus with inner radius $r$ and outer radius $R$ is $\mathbb{A}(r,R):=\{z\in\C:r<|z|<R\}$.
	\item[$\bullet$] By a \textit{numerical constant} we mean a constant that does not depend on any parameter.
\end{itemize}

\subsection{The approximation method}\label{subsec:method}
We shall construct our desired entire functions \M{using} an approximation argument based on H\"ormander's $\bar\partial$-theorem. We use the following one-dimensional version \M{of} \cite[Theorem~4.2.1]{Hormander}.  
\begin{thm}[H\"ormander]\label{thm:Hormander}
	Let $u:\C\rightarrow\R$ be a subharmonic function. Then, for every $g\in L^2_{\text{loc}}(\C)$ there exists $\beta \in L^2_{\text{loc}}(\C)$ solving $\bar\partial \beta=g$ in the sense of distributions such that
	\begin{equation}\label{integral_Hormander}
		\iint_\C\abs {\beta(z)}^2\frac{e^{-u(z)}}{\bb{1+\abs z^2}^{2}}dm(z)\le\frac12 \iint_\C \abs {g(z)}^2e^{-u(z)}dm(z)=: {\mathcal{I}^2},
	\end{equation}
	provided that ${\mathcal{I}^2}$ is finite.
\end{thm}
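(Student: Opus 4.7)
The plan is to derive this from H\"ormander's general weighted $\bar\partial$-theorem on $\C$ by introducing the auxiliary weight
$$\phi(z) := u(z) + 2\log(1+|z|^2).$$
A direct pointwise computation gives $\Delta[2\log(1+|z|^2)] = 8/(1+|z|^2)^2$, and since $u$ is subharmonic, $\Delta u \ge 0$ as a distribution; hence $\phi$ is subharmonic with the quantitative lower bound $\Delta\phi \ge 8/(1+|z|^2)^2$. This lower bound is the strict subharmonicity needed to apply the general result, and its precise form is exactly what will produce the constant $\tfrac12$ on the right of~\eqref{integral_Hormander} after the substitution in the next step.

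Granted the standard H\"ormander estimate in one complex variable---namely that for our subharmonic $\phi$ there exists $\beta \in L^2_{\text{loc}}(\C)$ with $\bar\partial\beta = g$ in distributions and
$$\iint_\C|\beta|^2 e^{-\phi}\,dm \;\le\; 4 \iint_\C|g|^2 \frac{e^{-\phi}}{\Delta\phi}\,dm$$
---the conclusion reduces to a routine substitution. Using $e^{-\phi} = e^{-u}/(1+|z|^2)^2$ and $1/\Delta\phi \le (1+|z|^2)^2/8$, the right-hand side collapses to $\tfrac12\iint_\C|g|^2 e^{-u}\,dm = \tfrac12\mathcal{I}^2$, while the left-hand side is exactly the integral appearing on the left of~\eqref{integral_Hormander}.

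To establish the general H\"ormander estimate I would follow the classical Hilbert-space duality argument. Consider $H := L^2(\C,\, e^{-\phi}dm)$ and view $\bar\partial$ as a densely defined closed unbounded operator $T\colon H \to H$. By Hahn--Banach / Riesz representation on the range of the adjoint $T^*$, the existence of the desired $\beta$ is equivalent to the a priori bound $|\langle g, \varphi\rangle_H|^2 \le 4\bigl(\iint|g|^2 e^{-\phi}/\Delta\phi\,dm\bigr)\|T^*\varphi\|_H^2$ for all $\varphi \in \mathrm{Dom}(T^*)$, which by a weighted Cauchy--Schwarz inequality reduces to the positive-commutator estimate
$$\|T^*\varphi\|_H^2 \;\ge\; \tfrac14\iint_\C|\varphi|^2 \Delta\phi\, e^{-\phi}\,dm$$
on a dense subset of $\mathrm{Dom}(T^*)$. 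For smooth compactly supported $\varphi$ and smooth $\phi$, this is a direct integration-by-parts Bochner--Kodaira--Nakano identity: the commutator $[\bar\partial,\bar\partial^*]$ contributes precisely $\Delta\phi$ as a positive multiplication operator. The main technical obstacle is the mere subharmonicity of $u$, which makes $\Delta u$ only a positive Radon measure and invalidates a literal integration by parts. I would bypass this by mollifying $u$ via $u_\epsilon := u \ast \rho_\epsilon$ for a standard radial mollifier, producing smooth subharmonic $u_\epsilon$ with $u_\epsilon \downarrow u$ pointwise; applying the smooth case with weight $\phi_\epsilon := u_\epsilon + 2\log(1+|z|^2)$ gives solutions $\beta_\epsilon$ satisfying the estimate, and a weak-compactness extraction in $L^2_{\text{loc}}(\C)$ combined with weak lower semicontinuity of the norm and monotone/dominated convergence for the weights $e^{-u_\epsilon}$ delivers~\eqref{integral_Hormander} in the limit.
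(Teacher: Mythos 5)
The paper does not prove this theorem: it is quoted verbatim (in the one-variable case) from \cite[Theorem 4.2.1]{Hormander}, and the theorem's proof is delegated to that reference. Your sketch is therefore not being compared against anything in the paper itself; rather, it is a reconstruction of H\"ormander's own derivation, and as such it is essentially correct. The weight modification $\phi = u + 2\log(1+|z|^2)$, the computation $\Delta\bigl[2\log(1+|z|^2)\bigr] = 8/(1+|z|^2)^2$, and the resulting bound $\Delta\phi \ge 8/(1+|z|^2)^2$ are exactly what H\"ormander uses to pass from the basic weighted $L^2$-estimate (your intermediate claim with the constant $4$, which in one variable reads $\int|\beta|^2 e^{-\phi} \le \int |g|^2 e^{-\phi}\bigl(\partial^2\phi/\partial z\partial\bar z\bigr)^{-1}$) to the form quoted in the paper. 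The constants are consistent: $4/\Delta\phi \le (1+|z|^2)^2/2$ and $e^{-\phi}(1+|z|^2)^2 = e^{-u}$ give precisely the $\tfrac12$ on the right of \eqref{integral_Hormander}.

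Two small remarks. First, there is a slip in your bookkeeping near the end of the substitution: since the paper \emph{defines} $\mathcal{I}^2 := \tfrac12\iint|g|^2 e^{-u}\,dm$, your right-hand side collapses to $\mathcal{I}^2$, not $\tfrac12\mathcal{I}^2$; the final displayed inequality is unaffected. Second, the last paragraph (duality and mollification) is a legitimate route but is compressed: the density of smooth compactly supported forms in the graph norm of $T^*$ on all of $\C$ needs a cutoff argument (Morrey's trick), and the extraction of a weak limit requires noting that $u_\eps \downarrow u$ gives a lower bound on $e^{-u_\eps}$ on compacts by the continuity of a single $u_{\eps_0}$, so the local $L^2$ norms of $\beta_\eps$ are uniformly bounded. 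These are standard, and none of them is a gap in the argument, but if this were being written out in full they would need to appear.
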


We use this theorem to approximate a model map that prescribes different dynamics in different regions—when done correctly, the approximation inherits the same dynamical behaviour.
\begin{prop}\label{prop:chi}
	Let $\{\Omega_k\}$ be a collection of pairwise $4\varepsilon$–separated planar domains, i.e., $\dist(\Omega_j,\Omega_k)\ge4\eps$ for all $j\neq k$. There exists a numerical constant, $C>0$, and a smooth function, $\chi:\C\to[0,1]$, such that:
	\begin{enumerate}
		\item $\chi\equiv1$ on each $\Omega_k$.
		\item $\chi\equiv 0$ on $\C\setminus \bigcup_k \Omega_k^{+\varepsilon}$.
		\item $\supp\nabla\chi\subset \bigcup_k \big(\Omega_k^{+\varepsilon}\setminus\Omega_k\big)$ and $\|\nabla\chi\|_{L^\infty}\le \frac C\eps$.
	\end{enumerate}
\end{prop}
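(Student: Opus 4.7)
The plan is the classical mollification of an indicator function, with parameters tuned so that the hypothesis $\operatorname{dist}(\Omega_j,\Omega_k)\ge 4\varepsilon$ cleanly separates the resulting bumps. The construction itself is completely standard; the only thing that requires thought is that a single choice of mollification radius must simultaneously yield the right values on each $\Omega_k$, the right values outside $\Omega_k^{+\varepsilon}$, and the desired support for the gradient. The hypothesis of $4\varepsilon$-separation, rather than merely $\varepsilon$-separation, is precisely what gives enough room for this.

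Concretely, I would fix once and for all a smooth, radially symmetric bump $\rho\colon\C\to[0,\infty)$ supported in $\overline{B(0,1)}$ with $\iint_{\C}\rho\,dm=1$, set the scale $\eta:=\varepsilon/4$ and $\rho_\eta(z):=\eta^{-2}\rho(z/\eta)$, and define the intermediate thickening $U_\varepsilon:=\bigcup_k\Omega_k^{+\varepsilon/2}$. The candidate function is then
\[
\chi\;:=\;\mathbf{1}_{U_\varepsilon}\ast\rho_\eta,
\]
which is automatically smooth and takes values in $[0,1]$.

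The verifications of (1)--(2) and the $L^\infty$ gradient bound are triangle-inequality bookkeeping. For $z\in\Omega_k$, every $y\in B(z,\eta)$ satisfies $\operatorname{dist}(y,\Omega_k)<\eta<\varepsilon/2$, so $B(z,\eta)\subset\Omega_k^{+\varepsilon/2}\subset U_\varepsilon$ and hence $\chi(z)=1$. For $z\notin\bigcup_k\Omega_k^{+\varepsilon}$, every $y\in B(z,\eta)$ satisfies $\operatorname{dist}(y,\Omega_k)\ge\varepsilon-\eta>\varepsilon/2$ for all $k$, so $B(z,\eta)\cap U_\varepsilon=\emptyset$ and $\chi(z)=0$. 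The gradient bound is Young's inequality applied to $\nabla\chi=\mathbf{1}_{U_\varepsilon}\ast\nabla\rho_\eta$, giving $\|\nabla\chi\|_{L^\infty}\le\|\nabla\rho_\eta\|_{L^1}=\eta^{-1}\|\nabla\rho\|_{L^1}=C/\varepsilon$ with $C$ a numerical constant.

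The only step requiring the full separation hypothesis is the support statement, but the same estimates in fact prove the stronger facts that $\chi\equiv 1$ on the open set $\bigcup_k\Omega_k^{+\varepsilon/4}$ and $\chi\equiv 0$ on the open set $\C\setminus\bigcup_k\{w:\operatorname{dist}(w,\Omega_k)\le 3\varepsilon/4\}$. Consequently $\supp\nabla\chi$ is contained in the union of the closed shells $\{w:\operatorname{dist}(w,\Omega_k)\le 3\varepsilon/4\}\setminus\Omega_k^{+\varepsilon/4}$, each of which already sits inside $\Omega_k^{+\varepsilon}\setminus\Omega_k$; the hypothesis $\operatorname{dist}(\Omega_j,\Omega_k)\ge 4\varepsilon$ guarantees that these shells are pairwise disjoint, so the inclusion genuinely lands in $\bigcup_k(\Omega_k^{+\varepsilon}\setminus\Omega_k)$, as required. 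I do not foresee any real obstacle: the whole argument is Young's inequality plus metric bookkeeping, and the factor $4$ in the separation has evidently been built into the statement precisely to accommodate a mollification at scale $\sim\varepsilon$.
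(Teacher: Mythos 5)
Your proof is correct and follows the standard mollification construction. The paper does not include its own proof of this proposition, instead referring to \cite[Proposition 2.5 and Observation 2.6]{GPS2024}; the argument there is along the same lines (mollifying an indicator of a thickened union at scale comparable to $\varepsilon$), so your approach is essentially the intended one.
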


For details on the construction of such maps, see for example \cite[Proposition 2.5 and Observation 2.6]{GPS2024}.


The technique we now describe was previously used in various contexts (e.g., \cite{Russakovskii_93}, \cite{Adi_measurably_19,Adi_translation_19, GPS2024, Ganny2025}). In particular, we previously introduced this technique (jointly with Evdoridou in \cite{Evdoridou_fast_2023}) to construct fast escaping wandering domains.  We prescribe the desired dynamics via a \emph{model map},  $h$, that is holomorphic on the union of pairwise separated domains, \M{$\bigcup_k \Omega_k^{+\varepsilon}$,} without assuming any global holomorphic extension. We then glue these local pieces with the cut-off \M{function} $\chi$, described in Proposition~\ref{prop:chi}, and correct the resulting $\bar\partial$–error by solving the equation $\bar\partial\beta=\bar\partial(\chi h)=(\bar\partial\chi)\cdot\,h$ using Theorem~\ref{thm:Hormander} with a carefully designed subharmonic weight, $u$. The separation obtained by multiplying by $\chi$, ensures that $(\bar\partial\chi)$ is supported in thin bands around $\Omega_k$, where \M{the subharmonic function, }$u$, will be large, which makes the integral in \eqref{integral_Hormander}{, $\mathcal{I}^2$,} finite and the resulting error estimates small. The following proposition summarizes the application of the method.

\begin{prop}\label{prop:method} 
Let $\{\Omega_j\}$ be a collection of planar domains satisfying the separation hypothesis of Proposition~\ref{prop:chi}, and denote by $\chi$ the smooth function constructed in Proposition~\ref{prop:chi}. Suppose $h$ is holomorphic on $\bigcup_j \Omega_j^{+\varepsilon}$, and let $u:\C\to\R$ be a subharmonic function satisfying
$$
\mathcal{I}:=\sqrt{\frac12 \iint_\C \abs {(\bar\partial\chi)\,h}^2e^{-u(z)}dm(z)}<\infty.
$$
Let $\beta$ be Hörmander’s solution to the $\bar\partial$-equation $\bar\partial\beta=\bar\partial(\chi h)=(\bar\partial\chi)\cdot\,h$ provided by Theorem~\ref{thm:Hormander}, and define the map $f\colon \C\to\C$ 
\[
f(z):=\chi(z)\,h(z)-\beta(z).
\]
Then $f$ is entire and satisfies:
\begin{enumerate}[label=$(A_\arabic*)$]
	\item \label{item:error_bound} (Error bound) For every $z\in\C$ and every $r\in \left(0,\max\bset{1,\frac{\abs z}3}\right]$ for which $\chi|_{B(z,r)}\equiv 1$,
	$$
	\abs{f(z)-h(z)} \leq \frac{2+2\abs z^2}{{r}}\cdot {\mathcal{I}}\cdot \underset{w\in B(z,r)}\max\; e^{\frac12 u(w)}.
	$$
	\item \label{item:growth_bound}(Growth bound) For every $z\in \C$, 
	$$
	\abs{f(z)}\leq \underset{w\in B(z,1)}\max\abs{h(w)}+\frac{2+2\abs z^2}{{\sqrt \pi}}\cdot {\mathcal{I}}\cdot \underset{w\in B(z,1)}\max\; e^{\frac12 u(w)}.
	$$
\end{enumerate}
\end{prop}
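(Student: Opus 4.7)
The plan is to verify, in order: (i) that $f$ is entire, (ii) the error bound \ref{item:error_bound}, and (iii) the growth bound \ref{item:growth_bound}.

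For entireness, I would compute $\bar\partial f$ in the sense of distributions. Since $h$ is holomorphic on $\bigcup_j \Omega_j^{+\varepsilon}$ and $\supp\chi\subset\bigcup_j \Omega_j^{+\varepsilon}$, the Leibniz rule gives $\bar\partial(\chi h)=(\bar\partial\chi)\,h$, which by definition equals $\bar\partial\beta$. Hence $\bar\partial f=0$ distributionally. Since $\chi h$ is bounded with compact support and $\beta\in L^2_{\mathrm{loc}}(\C)$, $f\in L^2_{\mathrm{loc}}(\C)$, and Weyl's lemma upgrades $f$ (after modification on a null set) to a function holomorphic on all of $\C$.

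For \ref{item:error_bound}, fix $z$ and an admissible $r$ with $\chi\equiv1$ on $B(z,r)$. Then $\bar\partial\chi\equiv 0$ on $B(z,r)$, so $\bar\partial\beta\equiv 0$ there, i.e.\ $\beta$ is holomorphic on $B(z,r)$; moreover $f-h=\chi h-\beta-h=-\beta$ on the same ball. The mean value property together with the Cauchy--Schwarz inequality yields
$$|\beta(z)|\leq\frac{1}{\sqrt{\pi}\,r}\,\|\beta\|_{L^2(B(z,r))}.$$
I would then insert Hörmander's weight and extract its extremum,
$$\|\beta\|_{L^2(B(z,r))}^2\leq\max_{w\in B(z,r)}\left[(1+|w|^2)^2 e^{u(w)}\right]\cdot\iint_{B(z,r)}\frac{|\beta(w)|^2\,e^{-u(w)}}{(1+|w|^2)^2}\,dm(w),$$
and bound the right-hand integral by $2\mathcal{I}^2$ using Theorem \ref{thm:Hormander}. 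A case analysis on whether $r\leq 1$ or $r\leq|z|/3$ then produces a uniform estimate $1+|w|^2\leq C(1+|z|^2)$ on $B(z,r)$ for a small numerical constant; combining everything (and absorbing the resulting numerical factor) gives \ref{item:error_bound}.

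For \ref{item:growth_bound}, since $f$ is entire I apply the mean value property to $f$ itself on $B(z,1)$, then triangle-inequality the two pieces:
$$|f(z)|\leq\frac{1}{\pi}\iint_{B(z,1)}|\chi(w)h(w)|\,dm(w)+\frac{1}{\pi}\iint_{B(z,1)}|\beta(w)|\,dm(w).$$
The first integral is controlled by $\max_{w\in B(z,1)}|h(w)|$ using $|\chi|\leq 1$ together with the fact that $\chi h$ vanishes wherever $h$ is undefined. The second is handled exactly as in Step 2---Cauchy--Schwarz followed by weight-insertion and Theorem \ref{thm:Hormander}---producing the factor $(2+2|z|^2)/\sqrt{\pi}$ with $r=1$. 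The main obstacle is purely bookkeeping: threading constants through the weight manipulation so as to match the stated $(2+2|z|^2)/r$ and $(2+2|z|^2)/\sqrt{\pi}$ factors, and carefully organising the case distinction on $r$ in \ref{item:error_bound} so that the quadratic weight is uniformly controlled in terms of $|z|$ on both the small- and large-$|z|$ regimes.
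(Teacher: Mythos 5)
Your proposal is correct and follows essentially the same route as the paper: Weyl's lemma for entireness, then for both bounds a mean-value estimate plus Cauchy--Schwarz with the Hörmander weight inserted and extracted as a maximum over the disk, with the stated numerical factors recovered from the case split $r\le 1$ versus $r\le |z|/3$. The only nitpick is your claim that $\chi h$ is ``bounded with compact support''---the collection $\{\Omega_j\}$ need not be finite nor bounded, so this is not guaranteed in general; what you actually need (and have) is that $\chi h$ is locally bounded, which already gives $f\in L^2_{\mathrm{loc}}$.
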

\begin{proof}
By Proposition \ref{prop:chi},  $\dbar\chi$ is bounded, and contained in a region where $h$ is holomorphic, which implies that  $g\in L^2_{\mathrm{loc}}(\C)$. This, combined with the proposition assumptions, means that all hypotheses of Hörmander's theorem are satisfied. Observe that $f\in L^1_{\mathrm{loc}}(\C)$, since $\chi$ is bounded, and $\beta\in L^2_{\mathrm{loc}}(\C)\subset L^1_{\mathrm{loc}}(\C)$. Moreover, since the equation $\bar\partial \beta=g$ in the statement of Hörmander's theorem holds in distribution, we obtain that 
\begin{equation*}
	\overline\partial f(z)=\overline\partial\bb{ \chi(z) \cdot h(z)-\beta(z)}=\overline\partial \chi(z)\cdot h(z)-\overline\partial\beta(z)=g(z)-g(z)=0
\end{equation*}
in distribution. By Weyl's lemma, weak solutions for the Laplace equation are smooth solutions, see e.g. \cite[p. 67, Remark 57]{JabbariSCV2020}, therefore the equality above holds point-wise, i.e., $f$ is entire.

In order to prove property \ref{item:error_bound}, note that for every $z\in\C$ and $r>0$ for which $\chi(w)=1$ for all $w\in B(z,r)$, $\beta(w)= f(w)-h(w)$, i.e., $\beta$ is holomorphic on $B(z,r)$. Applying Cauchy’s integral formula for the function $\beta=f-h$ and using Hörmander's theorem, 
\begin{align*}
	\abs{f(z)-h(z)}&=\abs{\beta(z)}\le \frac 1{\pi r^2}\abs{\iint_{B(z,r)}\beta(w)dm(w)}\le \frac1{ {\sqrt{\pi}r}}\sqrt{\iint_{B(z,r)}\abs{\beta(w)}^2dm(w)}\nonumber\\
	&<\frac1{ {r}} \underset{w\in B(z,r)}\max\; e^{\frac12 u(w)}\bb{1+\abs w^2}\sqrt{\iint_{B(z,r)}\abs{\beta(w)}^2\frac{e^{-u(w)}}{\bb{1+\abs w^2}^2}dm(w)}\nonumber\\
	&\le \frac1{ {r}}\underset{w\in B(z,r)}\max\; e^{\frac12 u(w)}\bb{1+\abs w^2}\cdot\mathcal I\le \frac{2+2\abs z^2}{{r}}\cdot {\mathcal{I}}\cdot \underset{w\in B(z,r)}\max\; e^{\frac12 u(w)}, 
\end{align*}
where the second inequality is due to Cauchy–Schwarz inequality, and the last inequality holds since for $r\leq \frac{\abs z}3$
$$\underset{w\in B(z,r)}\max\left(1+\abs w^2\right)\leq 1+ \bb{\abs z +r}^2\leq 2\bb{1+\abs{z}^2}.$$

To see property \ref{item:growth_bound}, arguing similarly, with $r=1$ and using Minkowski inequality (\M{the triangle inequality of the $L^2$ norm}), 
\begin{align*}
	\abs{f(z)}&=\frac{1}\pi\abs{\integrate{B\bb{z,1}}{}{f(w)}{m(w)}}
	\le \frac1{\sqrt \pi}\sqrt{\integrate{B\bb{z, 1}}{}{\abs{\chi(w)\cdot h(w)-\beta(w)}^2}{m(w)}}\\
	&\le \frac1{\sqrt \pi}\underset{w\in B(z,1)}\max\abs{h(w)}+\frac1{\sqrt \pi}\sqrt{\integrate{B\bb{z,1}}{}{\abs{\beta(w)}^2}{m(w)}}\\
	&\le \underset{w\in B(z,1)}\max\abs{h(w)}+\frac{2+2\abs z^2}{{\sqrt \pi}}\cdot {\mathcal{I}}\cdot \underset{w\in B(z,1)}\max\; e^{\frac12 u(w)},
\end{align*}
concluding the proof.
\end{proof}
Note that in order to apply this method successfully, we need to carefully craft a subharmonic function that allows us to apply Proposition \ref{prop:method}. On one hand, the subharmonic function must guarantee that the integral in \eqref{integral_Hormander} is finite; on the other hand, it must ensure that the bounds in the error, \ref{item:error_bound}, and growth, \ref{item:growth_bound}, are satisfactory. 

To apply Proposition \ref{prop:method} we need to carry out the following steps:
\begin{enumerate}
\item Define the model map.
\item \M{Find a suitable} subharmonic map.
\item Define the cut-off map, $\chi$, and show the corresponding integral in \eqref{integral_Hormander} is finite.
\item Apply Proposition \ref{prop:method} to obtain an entire map with subsequent error and growth bounds.
\end{enumerate}


\subsection{Subharmonic functions}

As we saw in the previous subsection, subharmonic functions play a central role in our construction:  they make the $L^2$ integral finite and keep the error and growth bounds under control where needed. We include two of the main results we shall use, and refer the reader to \cite{Ransford, hayman_subharmonic} for basic definitions and standard results.

We require our subharmonic \M{function} to satisfy different properties in different regions. Specifically, where we require good approximation, the function needs to be small (potentially negative), but in `gluing' regions (i.e., where $\partial \chi$ is supported), the function needs to be large. We shall often use the following well-known `gluing lemma' and its corollary.

\begin{thm}[{\cite[Theorem 2.4.5]{Ransford}}]
Let $u$ be a subharmonic function on an open set $U\subset\mathbb{C}$, and let $v$ be a subharmonic function on an open subset $V$ of $U$ such that
$\limsup_{z \to \zeta} v(z) \leq u(\zeta)$ for $\zeta \in U \cap \partial V.$
Then $\tilde{u}$ is subharmonic on $U$, where
\[\tilde{u} =
\begin{cases}
	\max\bset{u,v} & \text{on } V, \\
	u & \text{on } U \setminus V.
\end{cases}
\]
\end{thm}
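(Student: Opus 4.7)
The plan is to verify the two defining properties of subharmonicity for $\tilde u$: upper semicontinuity on $U$, and the sub-mean-value inequality on sufficiently small closed disks centred at each point of $U$. Since $V$ is open, one has $U \cap \partial V \subset U \setminus V$, so $\tilde u(\zeta) = u(\zeta)$ for every $\zeta \in U \cap \partial V$. I would accordingly partition $U$ into the three strata $V$, $U \setminus \overline V$, and $U \cap \partial V$, and handle each in turn.

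On the open set $V$, $\tilde u = \max\{u,v\}$, which is upper semicontinuous and subharmonic as the pointwise maximum of two subharmonic functions. On the open set $U \setminus \overline V$, $\tilde u$ agrees with $u$ in a neighbourhood of every point, so it inherits both properties directly from $u$. Hence the verification at interior points of both regions is automatic from the assumed subharmonicity of $u$ and $v$ and standard facts about maxima of subharmonic functions.

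The substantive work is at the stratum $U \cap \partial V$, where the boundary hypothesis $\limsup_{z \to \zeta} v(z) \le u(\zeta)$ enters. For upper semicontinuity at such a $\zeta$, I would take any sequence $z_n \to \zeta$ and split it into the subsequence contained in $V$, on which $\tilde u(z_n) = \max\{u(z_n), v(z_n)\}$, and the one contained in $U \setminus V$, on which $\tilde u(z_n) = u(z_n)$. The upper semicontinuity of $u$ controls the $u$-terms from above by $u(\zeta)$, and the hypothesis controls the $v$-terms also by $u(\zeta)$, yielding $\limsup_n \tilde u(z_n) \le u(\zeta) = \tilde u(\zeta)$. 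For the sub-mean-value inequality, I would use only the pointwise bound $\tilde u \ge u$ on $U$, which is immediate from the definition of $\tilde u$: then for every sufficiently small $r > 0$ with $\overline{B(\zeta,r)} \subset U$,
\[
\tilde u(\zeta) = u(\zeta) \le \frac{1}{2\pi}\int_0^{2\pi} u(\zeta + re^{i\theta})\,d\theta \le \frac{1}{2\pi}\int_0^{2\pi} \tilde u(\zeta + re^{i\theta})\,d\theta,
\]
where the first step uses subharmonicity of $u$ on $U$ and the second uses monotonicity of the integral.

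The only real obstacle is bookkeeping at the boundary stratum $U \cap \partial V$; once one records that $V$ being open forces $\tilde u(\zeta) = u(\zeta)$ there, and that $\tilde u \ge u$ globally, the upper-semicontinuity case reduces to a direct application of the two u.s.c.-type hypotheses (one on $u$, one on $v$), while the sub-mean-value case collapses to subharmonicity of $u$ itself. Noting finally that $\tilde u \ge u$ also prevents $\tilde u$ from being identically $-\infty$ on any component of $U$, the proof is complete.
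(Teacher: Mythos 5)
The paper cites this result directly from Ransford's book without reproducing a proof, so there is no in-paper argument to compare against. Your proof is correct and is essentially the standard textbook proof of the gluing lemma: the three observations that carry it — that $V$ open forces $U\cap\partial V\subset U\setminus V$ and hence $\tilde u(\zeta)=u(\zeta)$ there, that $\tilde u\ge u$ on all of $U$, and that the boundary hypothesis together with upper semicontinuity of $u$ controls both the $v$- and $u$-contributions when checking $\limsup_{z\to\zeta}\tilde u(z)\le\tilde u(\zeta)$ for $\zeta\in U\cap\partial V$ — are exactly the right ones, and the sub-mean-value inequality at such $\zeta$ then collapses, as you note, to the one for $u$ plus $\tilde u\ge u$. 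One point worth making explicit (your ordering already respects it, so this is not a gap): the upper semicontinuity of $\tilde u$ must be established before the circle-average step, since that is what guarantees $\theta\mapsto\tilde u(\zeta+re^{i\theta})$ is Borel measurable and bounded above, so that the integral $\frac1{2\pi}\int_0^{2\pi}\tilde u(\zeta+re^{i\theta})\,d\theta$ is a well-defined element of $[-\infty,\infty)$.
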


\begin{cor} \label{cor:gluing} Let $u$ and $v$ be subharmonic functions defined on open sets $U,V\subset \C$ respectively, such that $U\cap V\neq \emptyset$. If 
$$ \limsup_{z \to \zeta} v(z) \leq u(\zeta) \text{ for } \zeta \in U \cap \partial V \quad \text{ and }	 \quad \limsup_{z \to \zeta} u(z) \leq v(\zeta) \text{ for } \zeta \in V \cap \partial U,$$
then $w$ is subharmonic on $U\cup V$, where
\[w =
\begin{cases}
	u & \text{on } U \setminus V,\\
	\max\bset{u,v} & \text{on } U\cap V, \\
	v & \text{on } V \setminus U.
\end{cases}
\]		
\end{cor}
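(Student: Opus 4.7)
The plan is to reduce the corollary to the preceding gluing theorem by exploiting the fact that subharmonicity is a local property. Since $U \cup V$ is open, it suffices to show that every $z_0 \in U \cup V$ admits an open neighborhood on which $w$ is subharmonic, and I would split into cases according to where $z_0$ sits relative to $U$ and $V$.

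Three cases are immediate. If $z_0 \in U \setminus \overline{V}$, then a small enough disk about $z_0$ lies inside $U \setminus V$, where $w = u$ is subharmonic by hypothesis; the case $z_0 \in V \setminus \overline{U}$ is symmetric. If $z_0$ lies in the open set $U \cap V$, then a small disk about $z_0$ sits inside $U \cap V$, on which $w = \max\{u,v\}$ is subharmonic as the pointwise maximum of two subharmonic functions.

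The only delicate case is when $z_0$ lies on a relative boundary, say $z_0 \in U \cap \partial V$ (the subcase $z_0 \in V \cap \partial U$ being handled symmetrically). I would fix $r > 0$ with $B(z_0, r) \subset U$ and apply the preceding gluing theorem on the ambient open set $U' := B(z_0, r)$ with distinguished subset $V' := B(z_0, r) \cap V$, using the restrictions $u|_{U'}$ and $v|_{V'}$. The output $\tilde u$ provided by that theorem coincides with $w$ on $B(z_0, r)$, so subharmonicity of $w$ near $z_0$ follows once the hypothesis $\limsup_{z \to \zeta} v(z) \leq u(\zeta)$ for $\zeta \in U' \cap \partial V'$ is verified. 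I do not anticipate a real obstacle here; the only point requiring care is the routine set-theoretic identification $U' \cap \partial V' \subset U \cap \partial V$, obtained by noting that any such $\zeta$ is an interior point of $U'$ that is a limit of points of $V$ without being interior to $V$. Once this is checked, the corollary's standing assumption applies verbatim and the proof is complete.
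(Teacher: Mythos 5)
Your proof is correct. The paper states the corollary without proof, leaving it as an implicit consequence of Ransford's Theorem~2.4.5, and your argument is a valid instantiation of that reduction: the five cases exhaust $U\cup V$; the inclusion $U'\cap\partial V'\subset U\cap\partial V$ holds for the reason you indicate (any such $\zeta$ lies in $B(z_0,r)\subset U$, is in $\overline{V'}\subset\overline{V}$, and since $\zeta\in B(z_0,r)$ but $\zeta\notin V'=B(z_0,r)\cap V$, also $\zeta\notin V$, so $\zeta\in\overline{V}\setminus V=\partial V$); and $\tilde u$ coincides with $w$ on $B(z_0,r)$, so local subharmonicity follows. A marginally more economical route, which is likely what the authors had in mind, applies Theorem~2.4.5 twice globally rather than pointwise: once with ambient set $U$ and distinguished subset $U\cap V$, and once with ambient set $V$ and distinguished subset $U\cap V$. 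In each case the boundary hypothesis transfers because $U\cap\partial(U\cap V)\subset U\cap\partial V$ and $V\cap\partial(U\cap V)\subset V\cap\partial U$ (same argument as your inclusion), the two outputs are precisely $w|_U$ and $w|_V$, and since these agree on $U\cap V$ and subharmonicity is local, $w$ is subharmonic on $U\cup V$. This collapses your five cases into two applications plus a covering remark, but the key verification is identical to yours.
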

The following result allows \M{one} to modify a given subharmonic function by \M{essentially} assigning the value $\bb{-\infty}$ to prescribed points, which \M{gives rise to} a new subharmonic function taking very negative values in neighbourhoods of such points, see \cite[Figure 1]{Ganny2025}. \M{Such modification is} particularly useful when such points are in the regions where we want our entire map to approximate the model map \M{closely, for example} part \ref{item:error_bound} of Proposition~\ref{prop:method} \M{could become} arbitrarily small around them.
\begin{lem}[The Puncture Lemma {\cite[Lemma 2.1]{Ganny2025}}]\label{lem:punctures}
Let $u\colon \C \to \R$ be a continuous subharmonic function and, for $N\in\N \cup \{\infty\}$, let $\bset{B_k=B(z_k,r_k)}_{k=1}^N$ be a collection of pairwise disjoint disks. Assume that for every $k$, $\underset{z\in B_k}\inf\; \Delta u >0$. Then there exists a subharmonic function $v\colon \C \to \R$ satisfying
\begin{enumerate}
	\item $u(z)=v(z)$ whenever $z\nin\bunion k 1 N B_k$;
	\item \label{item:punctures} For every $\delta\in\bb{0,1}$, and $k$,
	$$
	\underset{z\in B(z_k,\delta\cdot r_k)}\max\;v(z)\le \underset{z\in B_k}\max\; u(z)-c\cdot r_k^2\cdot \left(\underset{z\in B_k}\inf\; \Delta u\right)\cdot \log\bb{\frac1\delta},
	$$
	where $c>0$ is some numerical constant.
\end{enumerate}
\end{lem}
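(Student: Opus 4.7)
The plan is to modify $u$ only inside each disk $B_k$, setting $v := u + \sum_k \psi_k$, where $\psi_k$ is a radially symmetric, continuous perturbation supported in $B_k$, vanishing on $\partial B_k$, and carrying a logarithmic singularity at $z_k$. To keep $v$ globally subharmonic, I will arrange that $\Delta\psi_k$ is a positive point mass at $z_k$ minus a small uniform density on $B_k$, so the hypothesis $\Delta u\ge c_k := \inf_{B_k}\Delta u > 0$ absorbs the negative density and the net distributional Laplacian of $v$ is a positive measure.

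Concretely, for $z\in B_k$ I would take
\[
\psi_k(z) \;=\; a_k\log\!\frac{|z-z_k|}{r_k} \;+\; b_k\bigl(r_k^2 - |z-z_k|^2\bigr), \qquad a_k = \frac{c_k r_k^2}{4\pi},\ b_k = \frac{c_k}{8\pi},
\]
and $\psi_k\equiv 0$ outside $B_k$. The coefficients are forced by two design constraints: matching both the boundary value \emph{and} the normal derivative of $\psi_k$ at $\partial B_k$ (equivalent to $a_k = 2 b_k r_k^2$) so that no spurious Dirac mass appears on $\partial B_k$, and requiring $\Delta\psi_k = 2\pi a_k\,\delta_{z_k} - 4 b_k\,dm$ to satisfy $4b_k\le c_k$ so $\Delta v\ge 0$ on $B_k$. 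One can also read $\psi_k$ off logarithmic potential theory: it is the potential difference between the uniform mass $\tfrac{c_k}{2\pi}\mathbf{1}_{B_k}\,dm$ and the point mass $\tfrac{c_k r_k^2}{2}\delta_{z_k}$ of equal total mass, whose potentials agree outside $B_k$ by Newton's shell theorem in the logarithmic setting --- this is exactly why $\psi_k\equiv 0$ off $B_k$.

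With this $\psi_k$ the first conclusion is immediate. Subharmonicity of $v$ reduces to checking $\Delta v$ is a positive measure, which follows from the $C^1$-matching across $\partial B_k$ and the identity $\Delta v = (\Delta u - c_k/(2\pi))\,dm + \tfrac{c_k r_k^2}{2}\delta_{z_k}\ge 0$ on $B_k$. For the quantitative bound, the explicit form gives, on $B(z_k,\delta r_k)$, the estimate $\psi_k(z) \le -\tfrac{c_k r_k^2}{4\pi}\log(1/\delta) + \tfrac{c_k r_k^2}{8\pi}(1-|z-z_k|^2/r_k^2)$. The positive quadratic remainder is absorbed by observing that $u - \tfrac{c_k}{4}|z-z_k|^2$ is subharmonic on $B_k$, whence the maximum principle yields $u(z) \le \max_{B_k}u - \tfrac{c_k}{4}(r_k^2 - |z-z_k|^2)$; adding the two bounds and using $\tfrac{c_k}{4} > \tfrac{c_k}{8\pi}$ kills the quadratic contribution with room to spare and produces $v(z) \le \max_{B_k}u - \tfrac{c_k r_k^2}{4\pi}\log(1/\delta)$, establishing the claim with explicit numerical constant $c = 1/(4\pi)$.

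The main technical point I would flag is that the naive ``lower $u$ inside $B_k$ while keeping boundary values'' construction does not preserve global subharmonicity --- the sub-mean-value test at points of $\partial B_k$ fails generically. Both non-routine ingredients above exist to fix this: the strict lower bound $\inf_{B_k}\Delta u > 0$ provides the ``Laplacian budget'' needed to carry the uniform negative density of $\Delta\psi_k$, while the $C^1$-matching $\psi_k(r_k)=0=\psi_k'(r_k)$ prevents any singular boundary contribution in the distributional Laplacian. The subtle part of the quantitative estimate is the cancellation between the positive quadratic term in $\psi_k$ and the quadratic gap $\max_{B_k}u - u(z)$ from the maximum principle for $u - \tfrac{c_k}{4}|z-z_k|^2$; without this second ingredient the bound would degrade like $(1-\delta)^2$ near $\delta = 1$ instead of like $\log(1/\delta)$.
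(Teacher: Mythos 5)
Your construction is correct and self-contained. Note first that the paper does not prove this lemma but cites it from \cite{Ganny2025}, so there is no internal proof to compare against; I assess your argument on its own.

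The argument is sound. The choices $a_k=2b_k r_k^2$ (to make the radial extension $C^1$ across $\partial B_k$, so the distributional Laplacian acquires no singular part on the circle) and $4b_k\le c_k$ (so the uniform negative density is absorbed by $\inf_{B_k}\Delta u$) together yield $\Delta v\ge 0$ as a measure, with the log singularity contributing the positive Dirac mass $2\pi a_k\delta_{z_k}$. Since $\psi_k$ is radially increasing on $(0,r_k)$ and vanishes at $\rho=r_k$, one has $\psi_k\le 0$, so $v\le u$; this, plus continuity of each $\psi_k$ up to $\partial B_k$, gives upper semicontinuity of $v$. Your maximum-principle step (applying the subharmonicity of $u-\tfrac{c_k}{4}|z-z_k|^2$ on $B_k$) is the key observation that upgrades the crude estimate $\psi_k(\delta r_k)=b_kr_k^2\bigl(2\log\delta+1-\delta^2\bigr)\sim -2b_kr_k^2(1-\delta)^2$ near $\delta=1$ to a genuine $\log(1/\delta)$ bound uniformly in $\delta\in(0,1)$; without it the conclusion would indeed fail for $\delta$ close to $1$. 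The explicit constant $c=\tfrac1{4\pi}$ is valid (in fact taking $b_k=\tfrac{c_k}{4}$, $a_k=\tfrac{c_kr_k^2}{2}$ gives the cleaner $c=\tfrac12$, with the two quadratic terms cancelling exactly).

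Two minor points worth recording. First, $v(z_k)=-\infty$, so strictly speaking $v$ maps into $[-\infty,\infty)$ rather than $\R$; this is a harmless abuse in the statement (and is forced by the conclusion, since the bound in item (2) tends to $-\infty$ as $\delta\to 0$). If one insists on a real-valued $v$, replace $\log(\rho/r_k)$ by $\max\{\log(\rho/r_k),-M\}$ for large $M$, which is still subharmonic and preserves (2) for $\delta\ge e^{-M}$. Second, for $N=\infty$ you should say a word about why $v=u+\sum_k\psi_k$ is subharmonic even when disks accumulate: on any compact $K$, disjointness and $\Delta u\ge c_k$ on $B_k$ give $\sum_{B_k\subset K}c_k\pi r_k^2\le\Delta u(K)<\infty$, so the signed measures $\Delta\psi_k$ (each of total variation $c_kr_k^2$) sum to a locally finite measure and the $\psi_k$ are summable in $L^1_{\mathrm{loc}}$; the usc property at accumulation points follows from $\psi_k\le 0$. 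This is routine but was not addressed.
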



\subsection{Conformal maps}
\M{One of the key ingredients in our proof is repeatedly straightening the iterates of $U$ back to the unit disk. We do so by using a Riemann map while keeping quantitative control of how close the straightening is to the identity when the these iterates are nearly a disk.  }The following result of Warschawski becomes useful for this goal.

\begin{lem}[Warschawski, {\cite[Lemma 3]{War50}}]\label{lem:war}
Let $G\subset\C$ be a simply connected domain satisfying that
\begin{equation}\label{eq:war}
	\partial G\subset \A\left( \frac1{1+\eps}, 1+\eps\right)  \quad \text{ for some } \eps>0.
\end{equation}
Let $h:\D\rightarrow G$ be the Riemann map normalised so that $h(0)=0$ and $\arg(h'(0))>0$. Then for every $r<1$
$$
\underset{\abs z\leq   r}\max\;\abs{h(z)-z}\le r\eps e^\eps\bb{1+\frac{2}{\pi}\log\bb{\frac{1+r}{1-r}}}.
$$
\end{lem}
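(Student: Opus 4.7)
The plan is to analyse the holomorphic function $\phi(z):=\log(h(z)/z)$, which is well-defined on $\D$ because $h$ has a single, simple zero at the origin, and whose value there satisfies $\phi(0)=\log h'(0)\in\R$ by the normalisation of $h$. My strategy is to bound $\mathrm{Re}\,\phi$ and $\mathrm{Im}\,\phi$ separately and then translate these into an estimate for $|h(z)-z|=|z|\,|e^{\phi(z)}-1|$.

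First I would derive from the hypothesis the inclusions $B(0,1/(1+\eps))\subset G\subset B(0,1+\eps)$, which hold because $G$ is the bounded component of $\C\setminus\partial G$ containing $0$. Two applications of Schwarz's lemma—one to $w\mapsto h(w)/(1+\eps)\colon\D\to\D$, and one to $\zeta\mapsto h^{-1}(\zeta/(1+\eps))\colon\D\to\D$—give $1/(1+\eps)\le h'(0)\le 1+\eps$. I would then promote this to the global bound $1/(1+\eps)\le |h(z)/z|\le 1+\eps$ on $\D$ by applying the maximum-modulus principle to the non-vanishing bounded holomorphic functions $h(z)/z$ and $z/h(z)$, using the classical fact that for every $\zeta\in\partial\D$ the cluster set of $h$ at $\zeta$ is contained in $\partial G\subset\A(1/(1+\eps),1+\eps)$.

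This forces $\|u\|_{L^\infty(\D)}\le\log(1+\eps)\le\eps$ for the harmonic function $u:=\mathrm{Re}\,\phi=\log|h/z|$, which has a.e.\ radial boundary values $u^*\in L^\infty(\partial\D)$ by Fatou's theorem. Since $\phi(0)\in\R$, the Schwarz integral representation gives
\[
\phi(z)=\frac{1}{2\pi}\int_0^{2\pi} u^*(e^{i\theta})\,\frac{e^{i\theta}+z}{e^{i\theta}-z}\,d\theta,
\]
so that $v:=\mathrm{Im}\,\phi$ at a point $z$ with $|z|=\rho$ is, up to sign, the convolution of $u^*$ with the conjugate Poisson kernel $Q_\rho(\psi)=\frac{2\rho\sin\psi}{1-2\rho\cos\psi+\rho^2}$. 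A routine substitution $t=1-2\rho\cos\psi+\rho^2$ yields $\frac{1}{2\pi}\|Q_\rho\|_{L^1}=\frac{2}{\pi}\log\bb{\frac{1+\rho}{1-\rho}}$, so that
\[
|v(z)|\le \frac{2\log(1+\eps)}{\pi}\log\bb{\frac{1+|z|}{1-|z|}}\le \frac{2\eps}{\pi}\log\bb{\frac{1+r}{1-r}}\qquad\text{for every }|z|\le r<1.
\]

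Finally I would combine these bounds by writing $h(z)-z=z(e^{\phi(z)}-1)$ and decomposing $e^{u+iv}-1=e^u(e^{iv}-1)+(e^u-1)$. Using $|e^{iv}-1|\le|v|$, $e^u\le e^\eps$, and $|e^u-1|\le e^\eps-1\le\eps e^\eps$, one obtains
\[
|h(z)-z|\le r\bb{e^u|v|+|e^u-1|}\le re^\eps\bb{|v|+\eps}\le r\eps e^\eps\bb{1+\frac{2}{\pi}\log\bb{\frac{1+r}{1-r}}},
\]
which is the desired estimate. The most delicate step, in my view, is the uniform $L^\infty$ bound on $u$: it depends on the cluster-set description of the boundary behaviour of $h$, which is what allows the maximum-modulus principle to be applied to $h(z)/z$ and $z/h(z)$ despite the possibly very rough nature of $\partial G$.
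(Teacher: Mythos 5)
The paper cites this lemma from Warschawski \cite{War50} without reproducing a proof, so there is nothing in the text to compare against; your reconstruction is correct, and the appearance of $\frac{2}{\pi}\log\frac{1+r}{1-r}$ (the $L^{1}$ normalisation of the conjugate Poisson kernel) together with the factor $e^{\eps}$ make it essentially the classical argument. Every step checks out: Schwarz's lemma at $0$ gives $(1+\eps)^{-1}\le h'(0)\le 1+\eps$; the maximum principle applied to $h(z)/z$ and $z/h(z)$, together with the fact that the cluster set of $h$ on $\partial\D$ lies in $\partial G$, upgrades this to $\|u\|_{L^\infty(\D)}\le\log(1+\eps)\le\eps$ for $u=\log|h(z)/z|$; the Schwarz integral representation (legitimate here because $\phi(0)\in\R$) and the computation $\frac{1}{2\pi}\|Q_\rho\|_{L^{1}}=\frac{2}{\pi}\log\frac{1+\rho}{1-\rho}$ control the conjugate $v$; and the decomposition $e^{u+iv}-1=e^{u}(e^{iv}-1)+(e^{u}-1)$ with $|e^{iv}-1|\le|v|$, $e^{u}\le e^{\eps}$ and $|e^{u}-1|\le e^{\eps}-1\le\eps e^{\eps}$ assembles the stated bound. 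Two minor remarks. The paper's normalisation ``$\arg(h'(0))>0$'' is clearly a typo for $h'(0)>0$; you silently use the latter, and this is what forces $\phi(0)$ real and suppresses the purely imaginary constant in the Schwarz formula. Also, the inclusion $G\subset B(0,1+\eps)$ requires $G$ to be bounded, which the stated hypotheses do not literally force (for instance $G=\C\setminus K$ with $K$ a small compact connected subset of the annulus satisfies \eqref{eq:war} but is unbounded, and there the conclusion fails); your phrase ``the bounded component of $\C\setminus\partial G$ containing $0$'' supplies the intended reading, which is the one used in the paper's applications of the lemma.
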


Note that Lemma \ref{lem:war} deals with bounds on maps from the disk to a domain. Our construction requires an inverse form, given by the following lemma.
\begin{lem}\label{lem:riemann_approx} Let $G\subset\C$ be a simply connected domain satisfying \eqref{eq:war} for some $\eps\in\bb{0,e^{-2}}$. Then 
$$
\abs z \;<\; 1 - 3\,\varepsilon\log\bb{\frac{1}{\varepsilon}}\Rightarrow  \abs{h^{-1}(z)-z} \;<\; 3\,\varepsilon\log\bb{\frac{1}{\varepsilon}}.
$$
\end{lem}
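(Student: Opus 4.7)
The plan is to bound $h - \mathrm{id}$ on an intermediate circle via Lemma~\ref{lem:war} and then to invert using Rouch\'e's theorem. I would set $\delta := 3\varepsilon\log(1/\varepsilon)$ and pick the intermediate radius $\rho := 1 - \varepsilon$; since $\varepsilon < e^{-2}$ gives $3\log(1/\varepsilon) > 6$, one has $\delta > \varepsilon$, so $\rho > 1 - \delta$.

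First, I would apply Lemma~\ref{lem:war} with $r = \rho$ to obtain
\[
\max_{|w| \le \rho} |h(w) - w| \;\le\; F(\rho) := \rho\,\varepsilon\, e^{\varepsilon}\left(1 + \frac{2}{\pi}\log\frac{2-\varepsilon}{\varepsilon}\right).
\]
Using $\log(1/\varepsilon) > 2$, $e^{\varepsilon} < 1.15$, and $\log\frac{2-\varepsilon}{\varepsilon} \le \log 2 + \log(1/\varepsilon)$, a routine inequality check confirms $F(\rho) \le \delta - \varepsilon$. Next, for any $z$ with $|z| < 1 - \delta$, I would compare $h(w) - z$ with $w - z$ on the circle $|w| = \rho$: the bound above yields $|h(w) - w| \le \delta - \varepsilon < \rho - |z| \le |w - z|$, so Rouch\'e's theorem produces a unique $w_0 \in \{|w| < \rho\} \subset \D$ with $h(w_0) = z$; thus $w_0 = h^{-1}(z)$, and plugging back into the Warschawski bound gives $|h^{-1}(z) - z| = |h(w_0) - w_0| \le F(\rho) \le \delta - \varepsilon < 3\varepsilon\log(1/\varepsilon)$.

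The hard part is the tight quantitative book-keeping: since the constant $3$ appears on both sides of the implication, the Warschawski bound at $\rho = 1 - \varepsilon$ must be squeezed all the way down to $\delta - \varepsilon$ (not merely $\delta$), so as to leave an $\varepsilon$-gap absorbing the $1 - \rho = \varepsilon$ loss incurred by running Rouch\'e on $|w| = \rho$ rather than on $|w| = 1$. This is precisely why the assumption $\varepsilon < e^{-2}$ is made: it forces $\log(1/\varepsilon)$ to be large enough to dominate the universal constants $e^{\varepsilon}$, $\log 2$, and $\frac{2}{\pi}$ generated by Warschawski's estimate.
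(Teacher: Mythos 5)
Your proof is correct, and it takes a genuinely different and more elementary route than the paper's. The paper first upgrades Warschawski's bound to the cleaner form $|w-h(w)|\le 2\eps\log\frac{1}{1-|w|}$, then exploits conformality to deduce ball inclusions $B\bigl(0,\psi(r)\bigr)\subset h(B(0,r))$ with $\psi(r)=r-2\eps\log\frac{1}{1-r}$, and finally inverts $\psi$ explicitly via the Lambert~$W$-function to find a suitable preimage radius before re-applying Warschawski. You instead fix the intermediate radius $\rho=1-\eps$ once and for all, apply Warschawski there, and then use Rouch\'e's theorem to locate $h^{-1}(z)$ inside $B(0,\rho)$; the injectivity of the Riemann map then identifies the Rouch\'e zero with $h^{-1}(z)$. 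I verified your quantitative bookkeeping: with $L:=\log(1/\eps)\ge 2$ one has $F(\rho)\le\eps\cdot 1.15\bigl(1+\tfrac{2}{\pi}(\log 2+L)\bigr)\le\eps(1.66+0.74L)\le\eps(3L-1)=\delta-\eps$, which is exactly the strict slack $\delta-\eps<\rho-|z|\le|w-z|$ needed on $|w|=\rho$ for Rouch\'e. Your argument is shorter, avoids the Lambert~$W$ detour entirely, and makes the role of the hypothesis $\eps<e^{-2}$ more transparent (it gives $L>2$, which absorbs the additive constants from Warschawski); the paper's route, by contrast, gives a slightly more explicit description of $h^{-1}$ on circles, which could be reused but is not needed for the lemma as stated.
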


\begin{proof}

The idea of the proof is to understand what is the image of $R\cdot \D$ under $h\inv$, and use Warschawski's lemma, Lemma~\ref{lem:war}, on the new radius, i.e., if $h\inv(R\cdot\D)\subset B(0,\rho(R))\subseteq\D$ \M{then}
$$
\sup_{|z|<R}|h^{-1}(z)-z|  \le \max_{|w|\le \rho(R)}|w-h(w)|\le \eps e^\eps\bb{1+\frac{2}{\pi}\log\bb{\frac{1+\rho(R)}{1-\rho(R)}}},
$$
\M{by inclusion of the domains and monotonicity of function on the right.}

We will first bound $\eps e^\eps\bb{1+\frac{2}{\pi}\log\bb{\frac{1+r}{1-r}}}$ for suitable $\eps$ and $1-r<e^{-2}$: since \M{ whenever $r>\frac34$ we have }\(\log\left(1+r\right)\le \frac12\log\left(\frac{1}{1-r}\right)\), 
\[
1+\frac{2}{\pi}\log\left(\frac{1+r}{1-r}\right)
=1+\frac{2}{\pi}\left(\log\left(1+r\right)+\log\left(\tfrac{1}{1-r}\right)\right)\le 1+\frac3\pi\log\bb{\frac1{1-r}}\le \frac32\cdot \log\left(\frac{1}{1-r}\right),
\]
implying, together with Warschawski's lemma, Lemma~\ref{lem:war}, that for every $1-e^{-2}\le \abs w<1$, and $\eps<\frac14$
\begin{equation}\label{eq:Warsch}
	\begin{aligned}
		|w-h(w)|\le \abs w\eps e^\eps\bb{1+\frac{2}{\pi}\log\bb{\frac{1+\abs w}{1-\abs w}}}\le 2\eps \cdot\log\bb{\frac{1}{1-\abs w}}.
	\end{aligned}	
\end{equation}
Moreover, since $h$ is conformal, for every $r$, $\partial h\bigl(B(0,r)\bigr)=h\bigl(\partial B(0,r)\bigr)$, i.e., for every $w$ with $1-\abs w\in\bb{0,\M{e^{-2}}}$,
$$
\abs{h(w)}	\begin{cases}
	\ge \abs w-2\eps\log\left(\frac{1}{1-\abs w}\right),\\
	\le \abs w+2\eps\log\left(\frac{1}{1-\abs w}\right),
\end{cases}
$$
implying that 
$$
B\bb{0,r-2\eps\log\left(\frac{1}{1-r}\right)}\subset h \bigl(B(0,r)\bigr)\subseteq B\left(0,\,r+2\eps\log\left(\frac{1}{1-r}\right)\right).
$$
Define the function $\psi(r):=r-2\eps\log\left(\frac{1}{1-r}\right)$, for $r\in\sbb{0,1-2\eps}:=I_\eps$, and note that  $\psi$ is monotone increasing in $I_\eps$, and therefore
$$
\psi\bb{I_\eps}=\sbb{0,1-2\eps-2\eps\log\bb{\frac1{2\eps}}}=\sbb{0,1-2\eps\bb{1+\log\bb{\frac1{2\eps}}}}:=J_\eps,
$$
i.e., $\psi:I_\eps\rightarrow J_\eps$ is continuous and monotone increasing. In particular, for every $s\in J_\eps$
\begin{equation}\label{eq:psi}
	B\bigl(0,s\bigr)\subseteq h\bigl(B(0,\psi\inv(s))\bigr).
\end{equation}
A direct calculation shows that $\psi\inv:J_\eps\rightarrow I_\eps$ is defined by
\[
\psi^{-1}(s)
= 1 + 2\eps\, W_0\left(-\,\frac{e^{-\frac{1-s}{2\eps}}}{2\eps}\right),
\]
where $W_0$ denotes the principal branch of the Lambert $W$–function (the inverse of $t\mapsto t e^{t}$). Recall that $W_0(t)\le t$, then if $\abs z\in J_\eps$, following \eqref{eq:psi},
\[
\abs{h^{-1}(z)}\leq \psi^{-1}(\abs z) = 1 + 2\eps\, W_0\left(-\,\frac{e^{-\frac{1-\abs z}{2\eps}}}{2\eps}\right) 
\le 1 - e^{-\frac{1-\abs z}{2\eps}}.
\]
Let $R:=1-2\eps\bb{1+\log\bb{\frac1\eps}}$. 
For $|z|=R$ we have $e^{-\frac{1-\abs z}{2\eps}}=\frac\eps e$, implying that $\abs{h^{-1}(z)} \le 1 - \frac\eps e$, and since $h\inv(R\cdot\D)\subset B(0,1-\frac\eps e)$, applying \eqref{eq:Warsch}, we see that
$$
\M{\sup_{|z|<R}|h^{-1}(z)-z| \le \max_{|w|=1-\frac\eps e}|w-h(w)|\le 2\,\eps\log\left(\frac{1}{1-|w|}\right)\le 3\,\eps\log\left(\frac{1}{\eps}\right)}
$$
since $\eps<e^{-2}$, concluding the proof.
\end{proof}

\section{Approximating local behaviour}\label{sec:local}

The next lemma, which will later be used in the inductive step of the construction, `peels' one layer of attracting points while `straightening' the domain bringing it closer and closer to a disk. Formally, let $W$ be a bounded simply connected domain whose boundary, $\partial W$, is contained in the annulus $ \A\left({\frac1{1+\eps}},1+\varepsilon\right)$. Let $\varphi:\D\to W$ be a Riemann map with $\varphi(0)=0$. We construct an entire function, $g$, that approximates the inverse Riemann map, $\varphi^{-1}+\tau$, on $W$, where $\tau\in [2,\infty)$ is a prescribed translation parameter. Simultaneously, $g$ sends a prescribed collection of pairwise disjoint disks, $B(a_n,\delta)$, with centres $a_n$ in an annulus around $\partial W$, into what will later turn into an attracting basin, a small disk about $-A$, with $\tau\geq A\geq 2$; see Figure~\ref{fig:local_lemma}.


\begin{lem}[Local construction]\label{lem:small_disks} There exists $s\in\bb{0,1}$ so that for every $\kappa\in\bb{0,\frac{1}{5}}$, given
	\begin{enumerate}
		\item Parameters: a separation constant, 
		$0\leq \eta \leq \frac14$, a point, $-A$, and a translation value, $\tau$, with $\tau\geq A>2$.
		\item A conformal map, $\varphi:\D\rightarrow W$, satisfying that  $\varphi(0)=0$ and $\partial W\subset \A\bb{\frac1{1+\eps}, 1+\eps}$ for some $\eps <e^{-2}$ so that $3\eps\log\bb{\frac1\eps}<\kappa\cdot\eta$.
		\item A collection of points, $\bset{a_n}^N_{n=1}$, $N\in \N$, contained in $\A\bb{1+\eta-\eps,1+\eta+\eps}$ such that the disks $\bset{B\bb{a_n,\frac{\eta}5}}$ are pairwise disjoint.
	\end{enumerate}
	There exists an entire function $g:\C\rightarrow\C$ and a numerical constant $D>1$ such that for $$E_1:=\frac{D\cdot\tau}{\eta^{\frac32}\sqrt\kappa}\quad \text{ and } \quad E_2:=\exp\bb{-\frac\kappa2\cdot e^{\frac1\eta-\frac\kappa2}},$$
	\begin{enumerate}[label=$(L_{\arabic*})$]
		\item\label{item:att_basin} For every $\delta\in\bb{0,1}$ and for every $n\leq N,\; g\bb{B\bb{a_n, \frac{1}{10}\delta\cdot s\cdot\eta}}\subset B\bb{-A,E_1\cdot E_2\cdot\delta^{\frac 1{D}e^{\frac1{\eta}}-1}}$.
		\item\label{item:err_disk0} For every $\abs z\le1-\kappa\cdot\eta$,
		\begin{enumerate}
			\item \label{subitm:riemann} $\abs{g(z)-\bb{\varphi\inv(z)+\tau}}\le \frac{E_1}{\kappa}\cdot E_2^{\frac1{40}}.$
			\item \label{subitm:derivative} $\abs{g'(z)-\frac{\varphi'(0)}{\abs{\varphi'(0)}}}\le \frac{E_1}{\kappa^2\cdot\eta}\cdot E_2^{\frac1{40}}+\frac{12}{\kappa\cdot\eta}\cdot\eps\log\bb{\frac1\eps}$
		\end{enumerate}
		\item \label{item:growth} For every $R\geq 3$,
		$$
		M_g(R)=\underset{\abs z=R}\max\abs{g(z)}\le \tau+ R^{\frac D\eta e^{\frac1\eta}}.
		$$
	\end{enumerate}
\end{lem}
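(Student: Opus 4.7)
The proof follows the four-step approximation strategy laid out at the end of Subsection~\ref{subsec:method}. As \emph{model map} I would use
\[
h\equiv\varphi^{-1}+\tau\ \text{on}\ \Omega_0:=\varphi(B(0,1-\kappa\eta/4)),\qquad h\equiv-A\ \text{on}\ \Omega_n:=B(a_n,\eta/5-c_0\eta),
\]
noting that Lemma~\ref{lem:riemann_approx} keeps $\Omega_0$ within $3\varepsilon\log(1/\varepsilon)<\kappa\eta/3$ of $B(0,1-\kappa\eta/4)$; together with the hypothesis on $\{a_n\}$ this makes $\{\Omega_j\}_{j\ge 0}$ pairwise $4\varepsilon'$-separated for $\varepsilon'$ a small fixed multiple of $\eta$, and Proposition~\ref{prop:chi} supplies a cut-off $\chi$ with $\|\nabla\chi\|_\infty=O(1/\eta)$.

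The crux is the \emph{subharmonic weight} $u_0$. I would design $u_0$ so that: (i) $\inf\Delta u_0\gtrsim e^{1/\eta}/\eta^2$ on an open set containing $B(0,1)\cup\bigcup_n B(a_n,\eta/5)$; (ii) $u_0$ is bounded below on each gluing band by a positive multiple of $e^{1/\eta}/\eta$ (so $e^{-u_0}$ on the bands supplies exactly the factor $E_2$); (iii) $u_0(z)\lesssim((D/\eta)e^{1/\eta})\log|z|$ as $|z|\to\infty$. A natural ansatz is a maximum of localised quadratic bumps $M|z-z_k|^2+a_k$ centred at $z_k\in\{0,a_1,\dots,a_N\}$ with $M\sim e^{1/\eta}/\eta^2$, glued via Corollary~\ref{cor:gluing} to a logarithmic background $C\log|z|$ with $C\sim(D/\eta)e^{1/\eta}$, the additive constants $a_k$ chosen so that on each puncture ball the corresponding bump dominates the logarithm while the logarithm dominates globally at infinity. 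Applying the Puncture Lemma~\ref{lem:punctures} simultaneously to the disjoint balls $B(0,1)$ and $B(a_n,\eta/5)$ then produces a subharmonic $u$ that agrees with $u_0$ off these balls and satisfies, on $B(0,1-\kappa\eta)$ and on $B(a_n,s\eta\delta/10)$ respectively,
\[
\tfrac12 u(z)\le-\tfrac{\kappa}{40}e^{1/\eta-\kappa/2}\qquad\text{and}\qquad\tfrac12 u(z)\le\text{const}-\tfrac{1}{D}e^{1/\eta}\log(1/\delta),
\]
which are exactly the exponents needed for the factors $E_2^{1/40}$ and $\delta^{(1/D)e^{1/\eta}}$.

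With these inputs $\mathcal I$ is routinely controlled: $\bar\partial\chi$ is supported in bands of width $\varepsilon'\sim\eta$ where $|\bar\partial\chi|\lesssim 1/\eta$, $|h|\le\tau$, the total band area is $O(\eta)$, and $e^{-u_0}$ there is small by (ii). Proposition~\ref{prop:method} then yields an entire $g=\chi h-\beta$ together with \ref{item:error_bound}, \ref{item:growth_bound}. Conclusion~\ref{item:att_basin} follows by applying \ref{item:error_bound} at $z\in B(a_n,\tfrac{1}{10}s\eta\delta)$ with $r=\tfrac{1}{20}s\eta\delta$: the factor $1/r$ contributes $\delta^{-1}$ and $e^{u/2}$ contributes $\delta^{(1/D)e^{1/\eta}}$, so the product is $E_1E_2\,\delta^{(1/D)e^{1/\eta}-1}$. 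Conclusion~\ref{item:err_disk0}(a) is analogous at $z\in B(0,1-\kappa\eta)$ with $r\sim\kappa\eta$ small enough that $B(z,r)\subset\Omega_0$ (guaranteed by Lemma~\ref{lem:riemann_approx}) and inside the puncture region at $0$. For the derivative bound~\ref{item:err_disk0}(b) I would apply Cauchy's formula to $\beta=g-h$ on a circle of radius $\sim\kappa\eta/2$ around $z$ using the $L^\infty$ bound from \ref{item:error_bound} (this costs one extra $1/(\kappa\eta)$), and then correct $(\varphi^{-1})'$ to $\varphi'(0)/|\varphi'(0)|$ by a second Cauchy application to $\varphi^{-1}(z)-(\varphi'(0)/|\varphi'(0)|)z$ combined with Lemma~\ref{lem:riemann_approx}, which produces the Warschawski term $\tfrac{12}{\kappa\eta}\varepsilon\log(1/\varepsilon)$. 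Finally, \ref{item:growth} reads off directly from \ref{item:growth_bound} and the growth asymptotics of $u_0$.

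The main technical obstacle is the construction of $u_0$: the puncture lemma demands Laplacian of order $e^{1/\eta}/\eta^2$ at each tiny disk $B(a_n,\eta/5)$ to generate a $\delta$-exponent of size $e^{1/\eta}$, while the \emph{same} global subharmonic function must have polynomial growth with exponent only $(D/\eta)e^{1/\eta}$ and must remain large on the $O(\eta)$-wide gluing bands for $\mathcal I$ to absorb $E_2$. These three requirements are structurally in tension in two dimensions, which is what forces the localised-bump-plus-log ansatz, the specific scalings $M\sim e^{1/\eta}/\eta^2$, $C\sim(D/\eta)e^{1/\eta}$, $\varepsilon'\sim\eta$, and the choice of $s$ as a fixed (small) numerical constant designed to absorb the $\log(1/s)$ factors produced when rescaling $\delta'=\tfrac{3s\delta}{4}$ inside the puncture bounds at the $a_n$.
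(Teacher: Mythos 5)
Your overall strategy — model map, subharmonic weight, cut-off, H\"ormander via Proposition~\ref{prop:method} — is the same as the paper's, and several ingredients are correct (the puncture lemma at the disks $B(a_n,\eta/5)$, the scaling $\Delta u\sim e^{1/\eta}/\eta^2$ there, the log background of slope $\sim (D/\eta)e^{1/\eta}$ at infinity, the role of $s$). But the design of the cut-off function has a genuine flaw, and the weight construction is left at a level of vagueness where the gluing is not verified.

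The serious problem is the separation parameter $\varepsilon'\sim\eta$. You make $\Omega_0=\varphi(B(0,1-\kappa\eta/4))$ one of the separated pieces and use $h=\varphi^{-1}+\tau$ there, with a cut-off whose gradient is $O(1/\eta)$, i.e., a gluing band of width $\sim\eta$ around $\Omega_0$. But Proposition~\ref{prop:method} requires $h$ to be holomorphic on the whole fattened set $\Omega_0^{+\varepsilon'}$. Since $W$ extends only to $\abs z\approx 1/(1+\eps)\approx 1-\eps$ and $\eps<\kappa\eta/15\ll\eta$, the set $\Omega_0^{+\varepsilon'}\approx B(0,1-\kappa\eta/4+\eta/5)$ sticks far outside $W$, where $\varphi^{-1}$ is not defined. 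The band around the inner region must be of width $O(\kappa\eta)$ (to fit inside $W$), and this affects all the downstream estimates through $\norm{\nabla\chi}_\infty$. The paper's cut-off is also structurally different from yours: it uses only \emph{two} regions, $\Omega_1=B(0,1-\tfrac{\kappa}{2}\eta)$ and $\Omega_2=\C\setminus B(0,1+\tfrac{\kappa}{2}\eta)$, with the disks $B(a_n,\eta/5)$ lying inside $\Omega_2$ (where $\chi\equiv1$ and $h\equiv-A$). There are no gluing bands around the $a_n$ at all; $\bar\partial\chi$ is supported in two thin annuli near $\abs z=1$. Your scheme with $N$ separate pieces $\Omega_n$ is not fatal, but it introduces $N$ extra bands whose contribution to $\mathcal I$ you would have to estimate, and it gives nothing in return.

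Two further points worth flagging. First, you puncture the weight at $B(0,1)$ as well as at the $a_n$'s to produce the factor $E_2^{1/40}$. The paper does not puncture at $0$; it takes $u_0(z)=e^{\abs z/\eta}$, punctures only at the $B(a_n,\eta/5)$, and obtains the $E_2^{1/40}$ in item~\ref{subitm:riemann} directly from the fact that $e^{\abs z/\eta}$ is strictly smaller on $\abs z\le 1-\tfrac{11}{20}\kappa\eta$ than on the gluing annulus $\mathcal{A}_1\cup\mathcal{A}_2$ — the near-cancellation of $E_2^{1/\kappa}$ against $\max e^{u/2}$ leaves exactly the advertised power. Your alternative (puncture at $B(0,1)$) could in principle work, but the underlying function $e^{\abs z/\eta}$ does \emph{not} have $\inf_{B(0,1)}\Delta u\sim e^{1/\eta}/\eta^2$ (the infimum is attained near $0$ and is only $\sim 1/\eta^2$), so you would indeed need a different underlying function there — which brings in your quadratic-bump ansatz. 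Second, that ansatz is only sketched: the statement that the additive constants $a_k$ can be chosen so that each bump dominates the logarithm inside its ball while the logarithm dominates globally, \emph{and} that the matching inequalities of Corollary~\ref{cor:gluing} hold on the relevant boundaries, is exactly the delicate part and is not checked. The paper avoids the whole issue by choosing the single explicit function $u_0=e^{\abs z/\eta}$, which is already subharmonic on $\C$, has the right Laplacian near $\abs z\approx1$, and glues cleanly to $\tfrac{2e^3}{\eta}e^{1/\eta}\log\bigl(\abs z/(1+2\eta)\bigr)$ across the annulus $\mathcal{A}_4$ with a one-line verification.
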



We devote the rest of the section to proving Lemma \ref{lem:small_disks},~ following the general scheme outlined in Subsection~\ref{subsec:method}.

\paragraph*{The model map}~\\
Fix a set of parameters as in the statement of Lemma \ref{lem:small_disks}, and observe that $B\left(0,1-\frac\kappa4\eta\right)\subset W$. Define the map $h\colon \C\to \C$ as
$$
h(z)=\begin{cases}
	\varphi\inv(z)+\tau,& z\in B\left(0,1-\frac\kappa4\eta\right),\\
	-A,& \text{otherwise.}
\end{cases}
$$
We set some notation for certain pairwise disjoint annuli that will appear in our construction (see Figure \ref{fig:local_lemma}):
\begin{equation}
	\begin{aligned}\label{eq_annuli}
		\mathcal{A}_1&=\A\bb{1-\frac\kappa2\eta,1-\frac\kappa4\eta},\quad &\mathcal{A}_2=& \A\bb{1+\frac\kappa4\eta,1+\frac\kappa2\eta},	\\	
		\mathcal{A}_3&=\A\bb{1+\frac{3}{5}\eta,1+\frac{7}{5}\eta}, \quad &\mathcal{A}_4=&\A\bb{1+2\eta,1+3\eta}.
	\end{aligned}
\end{equation} 

\begin{figure}[h]
	\centering
	\def\svgwidth{0,9\textwidth}
	\begingroup%
	\makeatletter%
	\providecommand\color[2][]{%
		\errmessage{(Inkscape) Color is used for the text in Inkscape, but the package 'color.sty' is not loaded}%
		\renewcommand\color[2][]{}%
	}%
	\providecommand\transparent[1]{%
		\errmessage{(Inkscape) Transparency is used (non-zero) for the text in Inkscape, but the package 'transparent.sty' is not loaded}%
		\renewcommand\transparent[1]{}%
	}%
	\providecommand\rotatebox[2]{#2}%
	\newcommand*\fsize{\dimexpr\f@size pt\relax}%
	\newcommand*\lineheight[1]{\fontsize{\fsize}{#1\fsize}\selectfont}%
	\ifx\svgwidth\undefined%
	\setlength{\unitlength}{2338.58267717bp}%
	\ifx\svgscale\undefined%
	\relax%
	\else%
	\setlength{\unitlength}{\unitlength * \real{\svgscale}}%
	\fi%
	\else%
	\setlength{\unitlength}{\svgwidth}%
	\fi%
	\global\let\svgwidth\undefined%
	\global\let\svgscale\undefined%
	\makeatother%
	\begin{picture}(1,0.61333333)%
		\lineheight{1}%
		\setlength\tabcolsep{0pt}%
		\put(0,0){\includegraphics[width=\unitlength,page=1]{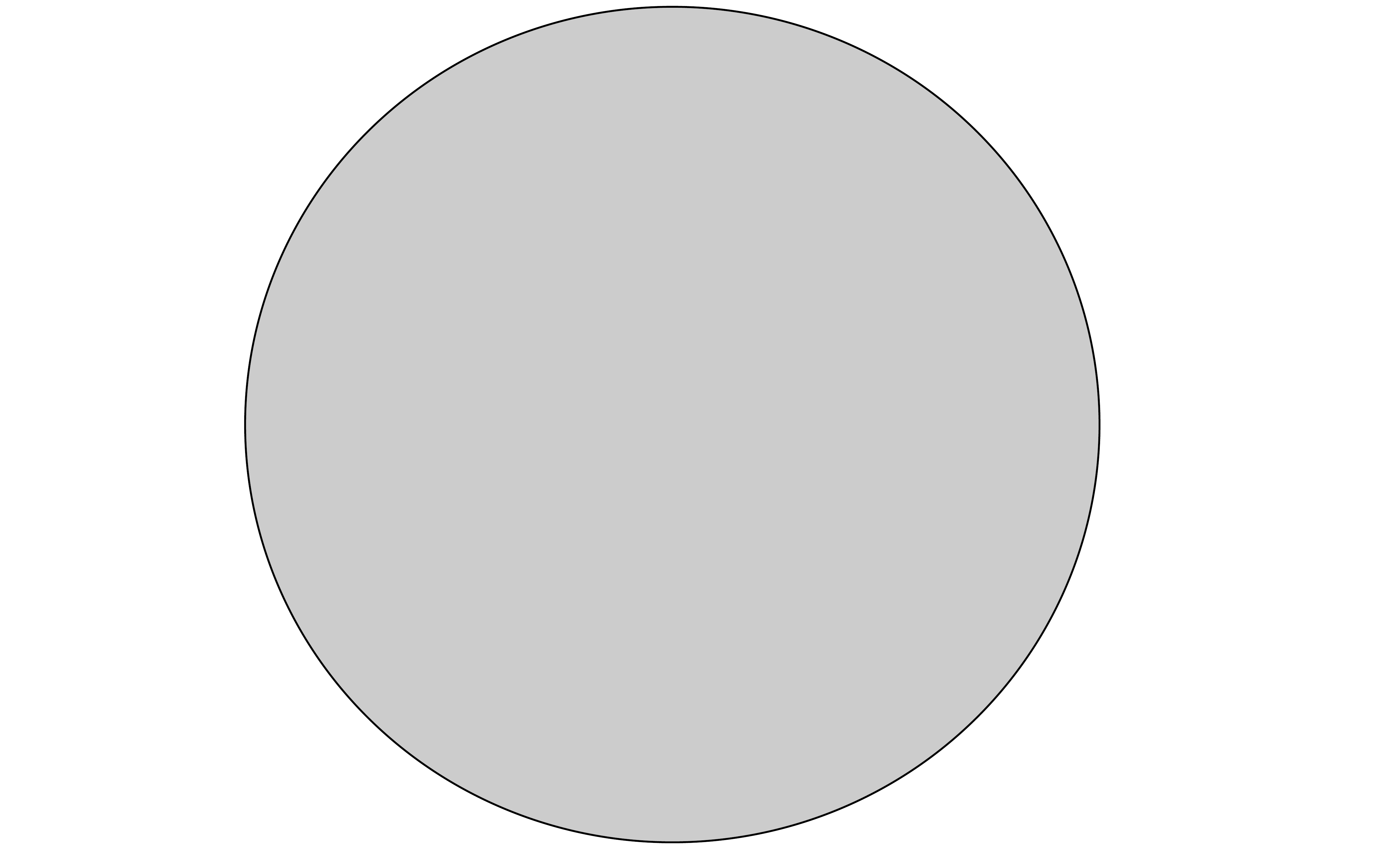}}%
		\put(0.47570439,0.2833586){\color[rgb]{0,0,0}\makebox(0,0)[lt]{\lineheight{1.25}\smash{\begin{tabular}[t]{l}$\fontsize{9pt}{1em}\mathcal{A}_4$\end{tabular}}}}%
		\put(0,0){\includegraphics[width=\unitlength,page=2]{local_lemma.pdf}}%
		\put(0.40868395,0.38443238){\color[rgb]{0,0,0}\makebox(0,0)[lt]{\lineheight{1.25}\smash{\begin{tabular}[t]{l}$\fontsize{9pt}{1em}\partial W$\end{tabular}}}}%
		\put(0,0){\includegraphics[width=\unitlength,page=3]{local_lemma.pdf}}%
		\put(0.56287311,0.25455171){\color[rgb]{0,0,0}\makebox(0,0)[lt]{\lineheight{1.25}\smash{\begin{tabular}[t]{l}$\fontsize{9pt}{1em}\mathbb{D}$\end{tabular}}}}%
		\put(0.37029018,0.50633046){\color[rgb]{0,0,0}\makebox(0,0)[lt]{\lineheight{1.25}\smash{\begin{tabular}[t]{l}$\fontsize{9pt}{1em}a_n$\end{tabular}}}}%
		\put(0.56335951,0.38916247){\color[rgb]{0,0,0}\makebox(0,0)[lt]{\lineheight{1.25}\smash{\begin{tabular}[t]{l}$\fontsize{9pt}{1em}\mathcal{A}_2$\end{tabular}}}}%
		\put(0,0){\includegraphics[width=\unitlength,page=4]{local_lemma.pdf}}%
		\put(0.801868,0.38479218){\color[rgb]{0,0,0}\makebox(0,0)[lt]{\lineheight{1.25}\smash{\begin{tabular}[t]{l}$\fontsize{9pt}{1em}\varphi^{-1}+\tau$\end{tabular}}}}%
		\put(0,0){\includegraphics[width=\unitlength,page=5]{local_lemma.pdf}}%
		\put(0.02562129,0.28694204){\color[rgb]{0,0,0}\makebox(0,0)[lt]{\lineheight{1.25}\smash{\begin{tabular}[t]{l}$\fontsize{9pt}{1em}-A$\end{tabular}}}}%
		\put(0.91857986,0.28647599){\color[rgb]{0,0,0}\makebox(0,0)[lt]{\lineheight{1.25}\smash{\begin{tabular}[t]{l}$\fontsize{9pt}{1em}\tau$\end{tabular}}}}%
		\put(0.63616191,0.41742427){\color[rgb]{0,0,0}\makebox(0,0)[lt]{\lineheight{1.25}\smash{\begin{tabular}[t]{l}$\fontsize{9pt}{1em}\mathcal{A}_3$\end{tabular}}}}%
		\put(0.71032425,0.46246849){\color[rgb]{0,0,0}\makebox(0,0)[lt]{\lineheight{1.25}\smash{\begin{tabular}[t]{l}$\fontsize{9pt}{1em}\mathcal{A}_4$\end{tabular}}}}%
		\put(0.49670207,0.35593203){\color[rgb]{0,0,0}\makebox(0,0)[lt]{\lineheight{1.25}\smash{\begin{tabular}[t]{l}$\fontsize{9pt}{1em}\mathcal{A}_1$\end{tabular}}}}%
	\end{picture}%
	\endgroup%
	
	\caption{Schematic of the construction in Lemma \ref{lem:small_disks} (not to scale). The transition region where $\bar\partial\chi\neq 0$ is confined to the two thin annuli $\mathcal{A}_1$ and $\mathcal{A}_2$. The darkest inner disk bounded by $\mathcal{A}_1$ is mapped by the model map to a neighbourhood of $\tau$, while the small disks are mapped to a neighbourhood of $-A$. The subharmonic weight, $u$, is obtained by gluing across $\mathcal{A}_4$.}\label{fig:local_lemma}
\end{figure}

\paragraph*{The subharmonic function}~\\
We apply the puncture lemma, Lemma \ref{lem:punctures}, with the disks $B\bb{a_n,\frac{\eta}5}, 1\le n\le N$, which are, by the choice of parameters, contained in the annulus $\mathcal{A}_3$, and with the underlying subharmonic function $$u_0\colon \mathbb{C}\to \R; \quad u_0(z):=e^{\frac{\abs z}\eta},$$ to create a subharmonic function $u_1\colon \mathbb{C}\to \R$. Note that $\Delta u_0(z)
= e^{\tfrac{|z|}{\eta}} \left( \frac{1}{\eta^{2}} + \frac{1}{\eta \lvert z \rvert} \right), 
$
where $\Delta$ denotes the Laplacian. Let $s:=e^{-25e^{2}/c}$, where $c$ is the numerical constant in Lemma \ref{lem:punctures}. Then, for every $n\leq N$ and $\delta\in\bb{0,1}$, applying part \eqref{item:punctures} of Lemma \ref{lem:punctures} to $\delta'=s\cdot \delta$, 
\begin{equation}\label{eq:max}
	\begin{aligned}
		\underset{z\in  B\bb{a_n,\delta'\cdot {\frac{\eta}5}}}\max\;u_1(z)&\le \underset{z\in B\bb{a_n,{\frac{\eta}5}}}\max\; u_0(z)- c\cdot \bb{{\frac{\eta}5}}^2\cdot \left(\underset{z\in B\bb{a_n,{\frac{\eta}5}}}\inf\; \Delta u_0\right)\log\bb{\frac1{\delta\cdot s}}\\
		&\le e^{{\frac1{ \eta}}\bb{1+\frac{7}{5}\eta}}-\frac {c\cdot\eta^2}{25}\cdot \frac{1}{\eta^2}e^{\frac1{\eta}}\log\bb{\frac1{\delta\cdot s}}\le e^{\frac1{\eta}}\bb{e^{2}-\frac{c}{25}\cdot\log\bb{\frac1{\delta\cdot s}}}\\
		&\le e^{\frac1{ \eta}}\bb{e^{2}-\frac{c}{25}\log\bb{\frac1{\delta}}-\frac{c}{25}\log\bb{\frac1{s}}}\le -\frac c{25}\cdot e^{\frac1{\eta}}\log\bb{\frac1\delta},
	\end{aligned}
\end{equation}
where in the second and third inequalities we use that $\eta<\frac14$, and the definition of $s$.

Next, define the subharmonic function $u\colon \C\to \R$ by
$$
u(z)=\begin{cases}
	u_1(z),& \abs z<1+2\eta,\\
	\max\bset{u_1(z),\frac{2e^3}\eta\cdot e^{\frac1\eta}\log\bb{\frac{\abs z}{1+2\eta}}},& 1+2\eta\le \abs z\le 1+3\eta,\\
	\frac{2e^3}\eta\cdot e^{\frac1\eta}\cdot\log\bb{\frac{\abs z}{1+2\eta}},& \text{otherwise}.
\end{cases}
$$
This map is indeed subharmonic by Corollary \ref{cor:gluing}: On the circle $\{|z|=1+2\eta\}$ note that $\frac{2e^3}\eta\cdot e^{\frac1\eta}\cdot \log\bb{\!\frac{|z|}{1+2\eta}}=0$, implying that $u=\max\{u_1,0\}=u_1$ as $\left.u_1\right|_{\{|z|=1+2\eta\}}\ge 0$. On the other hand, on $\bset{ \abs z=1+3\eta}$, 
$$
u_1(z)= e^{\frac1{\eta}\bb{1+3\eta}}= \frac{\frac{2e^3}\eta\cdot e^{\frac1\eta}\cdot\eta}2\le \frac{2e^3}\eta\cdot e^{\frac1\eta}\cdot \log\bb{1+\frac\eta{1+2\eta}}=\frac{2e^3}\eta\cdot e^{\frac1\eta}\cdot \log\bb{\frac{\abs z}{1+2\eta}},
$$
as long as  $\eta<\frac14$, i.e., $u=\max\bset{u_1,\M{\frac{2e^3}\eta\cdot e^{\frac1\eta}\log\bb{\frac{\abs z}{1+2\eta}}}}=\frac{2e^3}\eta\cdot e^{\frac1\eta}\cdot \log\bb{\frac{\abs z}{1+2\eta}}$.

\paragraph*{Bounding the integral}~\\
Following Proposition~\ref{prop:chi} with $\Omega_1=B(0,1-\frac{\kappa}{2}\eta)$ and $\Omega_2=\C\setminus B(0,1+\frac{\kappa}{2}\eta)$, that are $\kappa\cdot\eta$-separated
, let $\chi:\C\rightarrow[0,1]$ be a smooth function such that
\begin{enumerate}
	\item If $z\notin \A\bb{1-\frac\kappa 2\eta,1+\frac\kappa 2{\eta}}$, 
	then $\chi(z)=1$. 
	\item If $z\in \A\bb{1-\frac\kappa4\eta,1+\frac\kappa4\eta}$,
	then $\chi(z)=0$. 
	\item If $z\in \mathcal{A}_1\cup \mathcal{A}_2$, 
	then $\abs{\nabla\chi(z)}\leq \frac{C_0}{\kappa\cdot \eta}$ for some numerical constant $C_0>0$. 
\end{enumerate}

Observe that if $z\in \mathcal{A}_1\cup \mathcal{A}_2$, then, by definition, 
$$
u(z)=u_1(z)=e^{\frac1{\eta}\cdot\abs z}\ge \underset{\mathcal{A}_1\cup \mathcal{A}_2}\inf\; e^{\frac1{\eta}\abs z}=e^{\frac1{ \eta}-\frac\kappa2}, \quad \text{ and } \quad \vert h(z)\vert\leq \tau+2.
$$
Using this and noting that the area of $\mathcal{A}_1\cup \mathcal{A}_2$ is proportional to $\kappa\cdot \eta$, we have
\begin{align*}
	\iint_\C\abs{\bar\partial\chi(z)\cdot h(z)}^2e^{-u(z)}dm(z)&=\iint_{\mathcal{A}_1\cup \mathcal{A}_2}\abs{\bar\partial\chi(z)}^2\abs{h(z)}^2e^{-u(z)}dm(z)\\
	&\M{\le \frac{C_0^2}{\kappa^2\eta^2}\cdot \bb{\tau+2}^2 \cdot e^{-\bb{e^{\frac1{ \eta}-\frac\kappa2}}} \cdot m(\mathcal{A}_1\cup \mathcal{A}_2)=\frac{C_1^2}{\kappa\cdot \eta}\cdot \bb{\tau+2}^2 \cdot E_2^{\frac2\kappa}} \\
	& \leq \frac{\widetilde D^2\cdot\tau^2\cdot E_2^{\frac2\kappa}}{\kappa\cdot \eta}= \frac{s^2\cdot \eta^2}{10^4}\cdot E_1^2\cdot E_2^{\frac2\kappa}=:\mathcal{I}^2,
\end{align*}
for some numerical constant $\widetilde D$, where $s$ is the constant from \eqref{eq:max}; the factor $\bb{\frac s{100}}^2$ is included here for later convenience.

Define the function $g(z):=\chi(z)\cdot h(z)-{\beta}(z)$, where {$\beta$} is H\"ormander's solution to the $\bar\partial$-equation \M{for the function $\bar\partial\chi\cdot h$ and the subharmonic subharmonic function, $u$, defined above}. Following Proposition \ref{prop:method}, the map $g$ is entire. We shall now verify it satisfies properties \ref{item:att_basin}, \ref{item:err_disk0}, and \ref{item:growth} of the statement.

To see property \ref{item:att_basin}, let $z\in B(a_n, \frac{1}{10}\cdot\delta \cdot s\cdot \eta)$, then $B(z,  \frac{1}{10}\cdot\delta \cdot s\cdot \eta)\subset B(a_n,\delta \cdot s\cdot \frac\eta5)$, and by part \ref{item:error_bound} of Proposition~\ref{prop:method} with  $r=\frac{1}{10}\cdot\delta \cdot s\cdot \eta$ and \eqref{eq:max}, 
$$
\abs{g(z)-h(z)}\leq \frac{2+2\abs z^2}{{r}}\cdot {\mathcal{I}}\cdot \underset{w\in B(z,r)}\max\; e^{\frac12 u(w)}\le\frac{1}{\delta\cdot s\cdot \eta}\cdot s\cdot  \eta \cdot E_1\cdot E_2^{\frac1\kappa}\cdot e^{-\frac c{50}\cdot e^{\frac1{\eta}}\log\bb{\frac1\delta}}\le E_1\cdot E_2\cdot \delta^{\frac 1{D}\cdot e^{\frac1{\eta}}-1},
$$
since $\kappa<1$ and where $D=\frac{50}c$.

To see part (a) of property \ref{item:err_disk0} holds, let $z\in B(0,1-\frac{3\kappa}{4}\eta)$, and $r=\frac\kappa5{\eta}$, then $B(z,r)\subset B\bb{0,1-\frac{11\kappa}{20}{\eta}}$. In particular,  $h(w)=\varphi\inv(w)+\tau$ and $u(w)=u_0(w)$ for any $w\in B(z,r)$. Then, by part \ref{item:error_bound} or Proposition \ref{prop:method},

\begin{align}\label{eq:iia_extended}
	\abs{g(z)-\bb{\varphi\inv(z)+\tau}}&=\abs{g(z)-h(z)}\le \frac{5\bb{2+2\abs z^2}}{\kappa\cdot \eta}\cdot{\mathcal{I}}\cdot \underset{\abs w<1-\frac{11\kappa}{20}\eta}\max\; e^{\frac12 u(w)}\\
	&\le \frac{20}{\kappa\cdot \eta}\cdot \frac{s\cdot \eta\cdot E_1\cdot E_2^{\frac1\kappa}}{100}\cdot e^{\frac12e^{\frac1{\eta}\bb{1-\frac{11\kappa}{20}\eta}}}\le\frac{s}{4\kappa}\cdot E_1\cdot \exp\bb{-\frac12\cdot e^{\frac1\eta-\frac\kappa2}+\frac12e^{\frac1\eta}\cdot e^{-\frac{11\kappa}{20}}}\nonumber\\
	&=\frac s{4\kappa}\cdot E_1\cdot \exp\bb{-\frac12\cdot e^{\frac1\eta-\frac\kappa2}\bb{1-e^{-\frac{\kappa}{20}}}}\le\frac s{4\kappa}\cdot E_1\cdot \exp\bb{-\frac\kappa{80}\cdot e^{\frac1\eta-\frac\kappa2}}=\frac s{4\kappa}\cdot E_1\cdot E_2^{\frac1{40}},\nonumber
\end{align}
since $(1-e^{-x})>\frac x2$ whenever $x\in\bb{0,1}$.

To obtain good bounds on the derivative, namely, to prove part (b) of \ref{item:err_disk0}, we begin by approximating the derivative of $\varphi\inv$ using Lemma \ref{lem:riemann_approx}. Since this result may only be applied to a map with a positive derivative at zero, and this might not be the case for $\varphi\inv$, we rotate the function $\varphi$. Formally, if $\varphi'(0)=\abs{\varphi'(0)}e^{it_\varphi}$ we define $\Phi(z):=\varphi\bb{e^{-it_\varphi}\cdot z}$. Then $\varphi(\D)=\phi(\D)=W$ and 
$$
\Phi(0)=\varphi\bb{0}=0,\quad\quad \Phi'(0)=e^{-it_\varphi}\varphi'\bb{0}=e^{-it_\varphi}\cdot e^{it_\varphi}\cdot\abs{\varphi'(0)}=\abs{\varphi'(0)}>0,\quad\quad\Phi\inv(w)=e^{it_\varphi}\varphi\inv(w).
$$
Since, by assumption,
$$
\partial W\subset \A\left({\frac1{1+\eps}},1+\varepsilon\right),
$$
we may use Lemma \ref{lem:riemann_approx} to conclude that if $\abs z<1-\kappa\cdot\eta<1-3\eps\log\bb{\frac1\eps}$, then
$$
\abs{\varphi\inv(z)-e^{-it_\varphi}\cdot z}=\abs{e^{it_\varphi}\varphi\inv(z)- z}=\abs{\Phi\inv(z)- z}\le 3\eps\log\bb{\frac1{\eps}}.
$$

Next, we will use Cauchy's formula for the derivative: Given any two functions $F,G$ holomorphic on a disk $B(z,r)$ for some $z\in \C$ and $r>0$,
\begin{equation}\label{eq:derivative}
	\abs{F'(z)-G'(z)}=\abs{\frac1{2\pi i}\integrate{\abs{z-w}=r}{}{\frac{F(w)}{(z-w)^2}}w-\frac1{2\pi i}\integrate{\abs{z-w}=r}{}{\frac{G(w)}{(z-w)^2}}w}\le \frac{\underset{\abs{z-w}=r}\max\abs{F-G}}r.
\end{equation}
We apply \eqref{eq:derivative} to the disks $B\bb{z,\frac{\kappa\cdot \eta}4}$ which, whenever $\abs z<1-\kappa\cdot \eta$, satisfy $B\bb{z,\frac{\kappa\cdot \eta}4}\subset\bset{\abs w<1-\frac{3\kappa}{4}\eta}$, combined with part (a) of \ref{item:err_disk0}, which we already proved in \eqref{eq:iia_extended} to hold for all the points in the latter set,
\begin{align*}
	\abs{g'(z)-\frac{\varphi'(0)}{\abs{\varphi'(0)}}}&=\abs{g'(z)-e^{-it_\varphi}}=\abs{g'(z)-\bb{e^{-it_\varphi}\cdot z+\tau}'(z)}\\
	&\le \frac {4}{\kappa\cdot \eta}\bb{\underset{\abs{w-z}=\frac{\kappa\cdot \eta}4}\sup\abs{g-\bb{\varphi\inv+\tau}}+\underset{\abs{w-z}=\frac{\kappa\cdot \eta}4}\sup\abs{\varphi\inv(w)-e^{-it_\varphi}\cdot w}}\\
	&\le \frac{E_1}{\kappa^2\cdot\eta}\cdot E_2^{\frac1{40}}+\frac{12}{\kappa\cdot\eta}\cdot\eps\log\bb{\frac1\eps}.
\end{align*}

Lastly, to see property \ref{item:growth}, let $z$ such that $\abs z>3$. Then $\vert h(w)\vert =\vert -A\vert \leq \tau$ and $u(w)=\frac{2e^3}\eta\cdot e^{\frac1\eta}\cdot \log\bb{\frac{\abs w}{1+2\eta}}$ for all $w\in B(z,1)$. Thus, using part \ref{item:growth_bound} in Proposition \ref{prop:method},
$$
\abs{g(z)}\leq \tau+ \frac{2+2\abs z^2}{{\sqrt \pi}}\cdot {\mathcal{I}}\cdot \underset{w\in B(z,r)}\max\; e^{\frac12 u(w)}\le \tau+ 3\abs z^2\cdot \frac{s\cdot \eta}{100}\cdot E_1\cdot E_2\cdot \bb{1+\abs z}^{\frac{e^3}\eta\cdot e^{\frac1\eta}}\le \tau+ \abs z^{\frac D\eta e^{\frac1\eta}}
$$
if $\eta<\frac14$ and $D:=e^3+1$, concluding the proof.
\section{Constructing a sequence of entire functions}
For the rest of the paper, let us fix $\alpha\in (0,1)$ and a simply connected domain $U\subset \C$ with analytic boundary, as in the statement of Theorem \ref{thm:intro}. Note that since $U$ has analytic boundary, then must be regular. By applying an affine transformation, we may assume without loss of generality that $0\in U$. Let $\varphi_U:\D\rightarrow U$ be a Riemann map normalised such that $\varphi_U(0)=0$. By analyticity of $\partial U$, $\varphi_U$ has a conformal extension to a larger disk $\mathcal{D}\supset\D$, see e.g., \cite[Proposition~3.1]{pommerenke_boundary}. By abuse of notation, we also denote this extension by $\varphi_U$. It will be useful to consider both the map $\varphi_U$ and its inverse,
\begin{equation*}
	\varphi^{-1}_U\colon \varphi_U(\mathcal{D})\rightarrow  \mathcal{D} \quad \text{ and } \quad \varphi_U\colon \mathcal{D}\rightarrow  \varphi_U(\mathcal{D}).
\end{equation*} 

The goal of this section is to construct a sequence of entire maps whose limit is the desired entire map realising $U$ as a univalent wandering domain. Our construction requires several parameters. While there is some freedom in choosing those parameters, they do need to satisfy certain relations.

\subsection{Setting the Stage}
Let $\bset{\tau_k}$ be a monotone increasing sequence of numbers, $\tau_1>\tau_1(\alpha)$, satisfying


\begin{center}
	\begin{minipage}[b]{.4\textwidth}
		\vspace{-\baselineskip}
		\begin{equation*}
			\tag{C1}\label{cond:2} \tau_{k+1}^{11}\le\exp\bb{\tau_k^{\frac\alpha7}}
		\end{equation*}
	\end{minipage}\quad\quad
	and
	\begin{minipage}[b]{.4\textwidth}
		\vspace{-\baselineskip}
		\begin{equation*}
			\tag{C2}\label{cond:1} \frac{\log\log\tau_{k+1}}{\log\log\tau_{k+2}}\leq\frac12
		\end{equation*}
	\end{minipage}
\end{center}
\
\begin{rmk}
	Examples of sequences $\bset{\tau_k}$ satisfying the requirements are:
	\begin{itemize}
		\item $\tau_{k}=\exp\bb{\log^2\bb{\tau_{k-1}}}=\exp\bb{\exp\bb{2^{k+k_1}}}$ for some $k_1\in \N$ large enough.
		\item $\tau_{k}=\exp\bb{\iota\cdot \tau_{k-1}^\alpha}$ for $0<\iota\le \frac1{11}$ and $\tau_1>1$.
	\end{itemize}
\end{rmk}

%

Next, define, for each $k\geq 0$,
$$
\rho_k:=\frac1{\log\bb{\log\bb{\tau_{k+1}}}}.
$$
Note that since $\tau_k\nearrow \infty$ as $k\to \infty$,  then $\rho_k\searrow 0$, i.e., it is a monotone decreasing sequence converging to zero.

Throughout this section we work with a fixed monotone increasing sequence \(\bset{\tau_k}\) satisfying \eqref{cond:2} and \eqref{cond:1}. The next proposition collects routine bounds that hold if \(\tau_1\) is chosen sufficiently large. We will use these estimates without further comment; when increasing \(\tau_1\) is required, this will be explicitly indicated at the relevant point.
\begin{prop} \label{prop:tau1} We choose  $\tau_1$ large enough so that the following hold:
	\begin{enumerate}[label=(\alph*)]
		\item\label{item:tau1_a} For every $j \geq 1$, $\sum_{\ell=j}^{\infty}\frac{1}{\tau_{\ell+1}}\le \frac{2}{\tau_{j+1}}< \frac{\log^2(\tau_j)}{\tau_j}<\frac{\log^2(\tau_1)}{\tau_1}\le 1.$
		\item\label{item:tau1_b}  For all $k\geq 1$, we have $\frac{3^k}{\tau_k}\log^2\bb{\tau_k}<\rho_k$.
		\item\label{item:tau1_c}  $\rho_0$ is small enough such that $\D_{1+\rho_0}\subset \mathcal{D}$, implying that $\varphi_U$ is well defined as a conformal map on $\D_{1+\rho_0}$.
		\item \label{item:tau1_e} Any map $F\colon  \varphi_U(\mathcal{D})\to \C$ such that $\vert F(z)-(\varphi_U^{-1}(z)+\tau_k)\vert<\frac{1}{\tau_1}$ for  some $k$ and all $z\in \varphi_U(\mathcal{D})$ is univalent on $\overline{\D_{1+\rho_0}}$. 
	\end{enumerate} 
\end{prop}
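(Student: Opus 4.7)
The proposition is a bookkeeping statement: four estimates that all become available once $\tau_1$ is taken large enough, thanks to the very fast growth of $\bset{\tau_k}$ enforced by \eqref{cond:1} and \eqref{cond:2}. The plan is to verify the items one by one; only the last requires any real argument, the other three are direct consequences of growth.

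The engine is the iterated consequence of \eqref{cond:1}, namely $\log\log\tau_{k+1}\ge 2^{k}\log\log\tau_1$, so $\tau_{k+1}\ge\exp\bb{(\log\tau_1)^{2^k}}$, which forces $\tau_{k+1}/\tau_k\to\infty$ very rapidly. Part \ref{item:tau1_a} is then a geometric-series estimate: once $\tau_1$ is large, $\tau_{\ell+2}/\tau_{\ell+1}>2$ uniformly in $\ell$, giving the first inequality; the second rearranges to $\tau_{j+1}>2\tau_j/\log^2\tau_j$, immediate; the third uses the monotone decay of $x\mapsto\log^2(x)/x$; the last is $\log^2\tau_1/\tau_1\to 0$. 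Part \ref{item:tau1_b}, after unwinding $\rho_k=1/\log\log\tau_{k+1}$, reduces to $3^k\log^2\tau_k\cdot\log\log\tau_{k+1}<\tau_k$; condition \eqref{cond:2} bounds $\log\log\tau_{k+1}$ by $(\alpha/7)\log\tau_k$, and the super-exponential lower bound on $\tau_k$ then dwarfs $3^k\log^3\tau_k$ uniformly in $k$. Part \ref{item:tau1_c} is the observation that $\rho_0=1/\log\log\tau_1\to 0$ while $\mathcal{D}\supsetneq\overline{\D}$ is fixed.

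The only substantive item is \ref{item:tau1_e}, a standard perturbation-of-univalent-maps statement. Set $G_k(z):=\varphi_U^{-1}(z)+\tau_k$, which is univalent on $\varphi_U(\mathcal{D})$ since $\varphi_U^{-1}$ is. On a slightly enlarged compact neighbourhood of $\overline{\D_{1+\rho_0}}$ inside $\varphi_U(\mathcal{D})$ (available by \ref{item:tau1_c}), $|G_k'|=|(\varphi_U^{-1})'|$ is bounded below by some $c=c(U)>0$ independent of $k$, and $G_k$ is bi-Lipschitz with $|G_k(z_1)-G_k(z_2)|\ge c|z_1-z_2|$. Cauchy's derivative estimate converts the hypothesis $\|F-G_k\|_\infty<1/\tau_1$ into $\|F'-G_k'\|_\infty\le C/\tau_1$ on $\overline{\D_{1+\rho_0}}$, hence $|F'|\ge c/2$ once $\tau_1$ is large. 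Univalence of $F$ then follows by the standard two-case split: for $z_1,z_2\in\overline{\D_{1+\rho_0}}$ sufficiently separated, the bi-Lipschitz lower bound combined with $|F-G_k|<1/\tau_1$ forces $F(z_1)\neq F(z_2)$; for $z_1,z_2$ close, the derivative lower bound for $F$ does the same. The main potential snag is verifying that the constants $c,C$ depend on $U$ but not on $k$, so that a single large choice of $\tau_1$ serves for all $k$ simultaneously; this is automatic, since $G_k'=(\varphi_U^{-1})'$ is $k$-independent.
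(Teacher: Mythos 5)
Your handling of items \ref{item:tau1_a}--\ref{item:tau1_c} is correct and essentially the paper's own argument: iterate the doubling of $\log\log\tau_k$ from (C2) to get super-exponential growth, then use (C1) to control $\log\log\tau_{k+1}$ by $(\alpha/7)\log\tau_k$ in part \ref{item:tau1_b}, and observe $\rho_0\to 0$ for \ref{item:tau1_c}. Where you genuinely diverge is item \ref{item:tau1_e}: the paper simply cites an external perturbation-of-univalence lemma from Mart\'{\i}-Pete--Rempe--Waterman, while you reconstruct the argument directly (bi-Lipschitz lower bound for $\varphi_U^{-1}$ on a compact neighbourhood, Cauchy estimate for $F'-G_k'$, two-scale injectivity split). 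That reconstruction is sound and self-contained, and it makes the $k$-independence of constants transparent.

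One small imprecision in your ``close points'' case: a uniform lower bound $|F'|\ge c/2$ alone does not give injectivity at a fixed small scale --- $e^z$ has $|F'|$ bounded below on a horizontal strip yet is far from injective there. What your argument actually needs (and already has the ingredients for) is the \emph{closeness} $\|F'-G_k'\|_\infty\le C/\tau_1$ together with uniform continuity of $G_k'$ on the compact set: choose $\delta$ so that $|G_k'(\zeta)-G_k'(z_1)|<c/4$ whenever $|\zeta-z_1|<\delta$, then $|F'(\zeta)-F'(z_1)|<c/4+2C/\tau_1<|F'(z_1)|$, and the Noshiro--Warschawski-type estimate $\bigl|F(z_2)-F(z_1)-F'(z_1)(z_2-z_1)\bigr|<|F'(z_1)||z_2-z_1|$ yields injectivity on each $\delta$-disk. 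Alternatively you can skip any talk of $|F'|$ entirely and write $F(z_2)-F(z_1)=\bigl(G_k(z_2)-G_k(z_1)\bigr)+\int_{z_1}^{z_2}(F'-G_k')$, bounding the first term below by $c|z_2-z_1|$ and the second above by $(C/\tau_1)|z_2-z_1|$, which handles both scales at once. Either fix closes the gap; as stated, the phrase ``the derivative lower bound for $F$ does the same'' overstates what the lower bound alone gives.
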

\begin{proof}
	First, note that if \eqref{cond:1} holds then ${\tau_{k+2}}\ge\exp\bb{\log^2(\tau_{k+1})}$, which means that for every $n\in\N$ if we choose $\tau_1\ge\tau_1(n)$, then $\tau_{k+1}\ge\tau_k^n$ for all $k$. By induction this would imply that for every $k$ and $j$ we have
	\begin{equation}\label{eq:exp_growth}
		\tau_{k+j}\ge\tau_{k+j-1}^n\ge \tau_{k+j-2}^{n^2}\ge\cdots\ge\tau_k^{n^j}\ge\tau_1^{n^{j+k}},
	\end{equation}
	therefore, if $\tau_j\ge\tau_1\ge 2$ and we take $n=4$,
	$$
	\sum_{\ell=j}^{\infty}\frac{1}{\tau_{\ell+1}}=\sum_{\ell=1}^{\infty}\frac{1}{\tau_{j+\ell}}\le \sum_{\ell=1}^{\infty}\frac{1}{\tau_j^{2\ell}}\le \frac{1}{\tau_j}\sum_{\ell=0}^{\infty}2^{-\ell}<\frac{2}{\tau_{j+1}}.
	$$
	Next, if $\tau_j\ge e^2$, then
	$$
	\frac{2}{\tau_{j+1}}\le {\frac{2}{\tau_j^4}\le} \frac{\log^2(\tau_j)}{\tau_j}<\frac{\log^2(\tau_1)}{\tau_1}\le 1.
	$$
	
	This proves \ref{item:tau1_a}. To prove \ref{item:tau1_b}, {note that} 
	$$
	\frac{3^k}{\tau_k}\log^2\bb{\tau_k}\;\le\; \rho_k \iff 3^k\le\frac{\tau_k}{\log^2(\tau_{k})\log\log(\tau_{k+1})}.
	$$
	
	However, following \eqref{cond:2}, $$
	\frac1{\rho_k}{= }\log\log(\tau_{k+1})
	\le \log{\Bigl(\frac1{11}\tau_k^{\frac\alpha7}\Bigr)}
	\le \frac{\alpha}{{7}}\log(\tau_k)
	<\log\bb{\tau_k}
	$$
	for $\alpha\in\bb{0,1}$, implying that
	$$
	\frac{\tau_k}{\log^2(\tau_{k})\log\log(\tau_{k+1})}\ge \frac{\tau_k}{\log^3(\tau_k)}\ge \sqrt{\tau_k}\ge\tau_1^{2^{k-1}}\ge 3^k,
	$$
	following \eqref{eq:exp_growth} if $\tau_1$ is large enough. 
	
	Item \ref{item:tau1_c} holds if $\rho_0$ is small enough, which corresponds to $\tau_1$ being large enough as a function of the domain,~$U$. Finally, for the proof of \ref{item:tau1_e}, see for example \cite[Lemma 2.3]{MartiPete_JAMS_2025}.
\end{proof}
\paragraph*{More Parameters: } For every $k\geq 0$, define
$$r_k=1+\frac12\bb{\rho_{k+1}+\rho_k}=1+\rho_{k+1}+\frac12\bb{\rho_{k}-\rho_{k+1}},$$
and  the sets
$$U_k=\varphi_U\bb{\D_{1+\rho_k}}\quad \text{ and } \quad V_{k+1}=\varphi_U\bb{\D_{r_{k}}}.
$$
Note that for every $k\ge 0$, $U_{k+1} \subset V_{k+1}\subset U_k.$

As the function $\left.\varphi_U\right|_{\overline{\mathcal D}}$ is univalent, then $\left.\varphi_U'\right|_{\overline{\mathcal D}}$ is continuous, and non-vanishing, i.e., by rescaling the set $U$, we may fix a constant $C_U\ge 2$ such that
\begin{equation}\label{eq:derivative_phiU}
	\frac12 \;\le\; C_U^{-1}
	\;\le\; \inf_{z\in \overline U_0}\bigl|(\varphi_U^{-1})'(z)\bigr|
	\;\le\; \sup_{z\in \overline U_0}\bigl|(\varphi_U^{-1})'(z)\bigr|
	\;\le\; C_U.
\end{equation}

Note that \(U\subset U_k\) for all \(k\) (see Figure~\ref{fig:induction}). For every \(k\ge0\) we choose a collection of points
\begin{equation}\label{eq:points_Qk}
	Q_k:=\{a_j^k\}^{N_k}_{j=0}\subset \partial U_k,\quad N_k\in\mathbb{N},\qquad
	\frac1k\text{ - dense and \((r_k\cdot \rho_k)\)-separated on }\partial U_k.
\end{equation}
This can be arranged by taking equally spaced points on \(\partial\D_{1+\rho_k}\) and setting $Q_k$ to be their image under $\varphi_U$; by \eqref{eq:derivative_phiU}, $\varphi_U$ is bi-Lipschitz in a neighbourhood of this set, and so both density and separation transfer (up to uniform rescaling), and \(\partial U\) is the accumulation set of \(Q_k\) as \(k\to\infty\).

We are now ready to state our construction theorem.
\begin{thm}\label{thm:sequence}
	There exists  a sequence of entire functions $f_k:\C\rightarrow\C$ , $k\geq 1$, satisfying:
	\begin{enumerate}[label=$(S_\alph*)$]
		\item\label{itm:consistency} For every $k\ge 2$, $\underset{\abs z\le 2\tau_{k-1}}\sup\;\abs{f_k(z)-f_{k-1}(z)}<\frac1{\tau_{k+1}}$.
		\item\label{itm:growth} If $\abs z\ge2\tau_k$ then $\abs{f_k(z)}^2 e^{-\abs z^\alpha}<	15\cdot \abs z^4\tau_k^6$.
		\item\label{itm:iterations} 
		\begin{enumerate}[label=(\roman*)]
			\item \label{subitm:first} If $z\in\overline{U_2}$ then $\abs{f_1(z)-\bb{\varphi_U\inv(z)+\tau_1}}<\frac1{\tau_2}$.
			\item \label{subitm:iterations} If $k\ge2$, $j\le k-1$ and $\nu\le j+1$ then $z\in\overline{U_{\nu+1}}$ satisfies $\abs{f_k^\nu(z)-f_j^\nu(z)}<\frac{2\cdot 3^\nu}{\tau_{j+2}}$.
			\item \label{subitm:final} If $z\in\overline{U_{k+1}}$ then $\abs{f_k^{k+1}(z)-\bb{\varphi_U\inv(z)+\tau_{k+1}}}<\frac{3^{k+1}}{\tau_{k+1}}$.
		\end{enumerate}
		
		\item\label{itm:derivative}
		\begin{enumerate}[label=(\roman*)]
			\item $C_U\inv -\sumit j 1 k\frac{1}{\tau_{j+1}}<\left.\abs{f'_k}\right|_{\overline{U_2}}<C_U+\sumit j 1 k\frac{1}{\tau_{j+1}}$.
			\item \label{subitem:derivative_iterations}If $w\in B\bb{f_k^{\nu}(z),\frac{\rho_{\nu}}{10}-\sumit\ell\nu k\frac{3^\ell}{\tau_{\ell+1}}}$ for some $z\in \overline{U_{\nu+1}}$ and $\nu\leq k$, then $\abs{f'_k\bb{w}-1}<\frac{\log^2\bb{\tau_\nu}}{2\tau_{\nu}}+\sumit \ell {\nu} k\frac{1}{\tau_{\ell+1}}.$
		\end{enumerate}
		\item\label{itm:attracting} $f_1(B\bb{-\tau_1,1})\subset B\bb{-\tau_1, \frac{1}{\tau_1}}$ and for every $k\geq 2$ and $ 1\le j\le N_k$, $f_k\bb{B\bb{f^k_{k-1}\bb{a_j^k}, \rho_k^2}}\subset B\bb{-\tau_1,\frac1{2}}$.
	\end{enumerate}
\end{thm}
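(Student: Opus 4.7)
The plan is to construct the sequence $\{f_k\}$ inductively, obtaining each $f_k$ from Proposition \ref{prop:method} applied to a carefully designed model map and subharmonic weight. For the base case $k=1$, since $U$ itself need not be close to a round disk, Lemma \ref{lem:small_disks} does not apply directly. Instead, I would take as model map $h_1$ the function equal to $\varphi_U^{-1}(z)+\tau_1$ on a neighborhood of $\overline{U_0}$, equal to $-\tau_1$ on $B(-\tau_1,1)$, and equal to $-\tau_1$ on small disks around each $a\in Q_1$; then apply Proposition \ref{prop:method} with a subharmonic weight behaving at infinity like $\tfrac12|z|^\alpha$, glued (via Corollary \ref{cor:gluing}) to negative pieces on each of these regions, and further punctured via Lemma \ref{lem:punctures} near $Q_1$ and near $U_2$. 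The growth bound \ref{item:growth_bound} with $u\sim|z|^\alpha$ gives \ref{itm:growth}; the punctures on $U_2$ yield \ref{subitm:first}; those around $Q_1$ together with the model equaling $-\tau_1$ on $B(-\tau_1,1)$ yield \ref{itm:attracting}; and a Cauchy integral applied to the improved error furnishes the derivative bound in \ref{itm:derivative}(i).

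For $k\ge2$, the inductive step is essentially Lemma \ref{lem:small_disks} applied at the orbit of $\overline{U_{k+1}}$ under $f_{k-1}^k$. Using the inductive hypotheses \ref{subitm:final} and \ref{subitem:derivative_iterations} for $f_{k-1}$, one verifies that the shifted image $W_k := f_{k-1}^k(\overline{U_{k+1}})-\tau_k$ is a simply connected domain whose boundary lies in an annulus $\A(\tfrac{1}{1+\varepsilon_k},1+\varepsilon_k)$ with $\varepsilon_k$ small enough (controlled by $\rho_{k+1}$ and the accumulated inductive error) to satisfy the hypothesis of Lemma \ref{lem:riemann_approx}. Let $\tilde\varphi_k\colon\D\to W_k$ denote the normalized Riemann map and define
\[
h_k(z) := \begin{cases} f_{k-1}(z), & z\in \overline{B(0,2\tau_{k-1})}^{+\varepsilon}, \\ \tilde\varphi_k^{-1}(z-\tau_k)+\tau_{k+1}, & z\in (W_k+\tau_k)^{+\varepsilon}, \\ -\tau_1, & z\in B(f_{k-1}^k(a),\rho_k)^{+\varepsilon} \text{ for some } a\in Q_k.\end{cases}
\]
These three regions are pairwise well separated thanks to the rapid growth of $\tau_k$ imposed by \eqref{cond:2}. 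Apply Proposition \ref{prop:method} with a subharmonic weight $u_k$ obtained by gluing (Corollary \ref{cor:gluing}) a piece of the form $\tfrac12|z|^\alpha$ at infinity to negative pieces on each of these regions, punctured (Lemma \ref{lem:punctures}) near $f_{k-1}^k(Q_k)$. Properties \ref{itm:consistency}, \ref{itm:growth}, \ref{subitm:final} and \ref{itm:attracting} then follow directly from the error and growth estimates of Proposition \ref{prop:method}, and a Cauchy integral on the improved error yields the new derivative estimate in \ref{itm:derivative}.

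The cross-iterate bounds \ref{subitm:iterations} and \ref{subitem:derivative_iterations} are obtained by propagating the pointwise error $\sup_{|z|\le 2\tau_{k-1}}|f_k-f_{k-1}|<1/\tau_{k+1}$ across iterates: if $|f_k^{\nu-1}-f_{k-1}^{\nu-1}|$ is small on a disk around $f_{k-1}^{\nu-1}(z)$ and $|f_k'|$ is close to $1$ nearby, then one iteration increases the error by at most a controlled multiplicative factor, yielding after telescoping the bound $2\cdot 3^\nu/\tau_{j+2}$. The factor $3^\nu$ accommodates both the small derivative deviation and the single-step perturbation $|f_k-f_{k-1}|$.

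The main obstacle is the simultaneous control of growth and approximation. The weight $u_k$ must behave like $|z|^\alpha$ to enforce the order constraint via \ref{itm:growth}, yet it must be negative enough on each dynamically important region for the error to fall below the extremely small threshold $1/\tau_{k+1}$, while remaining large on the thin gluing bands supporting $\bar\partial\chi$ so that $\mathcal I$ is finite. The parameter scheme \eqref{cond:2}--\eqref{cond:1} is calibrated precisely so that (i) the three model regions above are well separated, (ii) the $W_k$ remain uniformly close to a round disk, which is essential for Lemma \ref{lem:riemann_approx} to remain applicable at every step, and (iii) the inductive errors telescope without blowing up across $k+1$ iterates. Carrying out this calibration with all five properties of the theorem tracked simultaneously across all $\nu\le k+1$ produces the long technical tower of bounds forming \ref{itm:iterations}--\ref{itm:derivative}.
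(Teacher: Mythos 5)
Your overall blueprint (Hörmander gluing with punctured weights, local straightening near almost-round iterates, telescoping the errors) matches the paper, but the concrete construction you write down has several structural errors that would prevent it from going through.

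\textbf{Base case.} Your $h_1$ has no branch near $\tau_1$: the paper's $h_1$ is also equal to $z + (\tau_2-\tau_1)$ on $B(\tau_1,\tau_1/3)$, and this piece is essential to control the second iterate and prove \ref{subitm:final} for $k=1$, i.e.\ $\lvert f_1^2(z)-(\varphi_U^{-1}(z)+\tau_2)\rvert<9/\tau_2$ on $\overline{U_2}$. Without it you have no information about $f_1$ near $\tau_1$ and cannot start the induction. Conversely, you add a branch $h_1\equiv -\tau_1$ on disks around $Q_1\subset\partial U_1\subset U_0$, but those disks lie inside the region where you also set $h_1=\varphi_U^{-1}+\tau_1$; the two prescriptions contradict each other. (They are also unnecessary: property \ref{itm:attracting} at $k=1$ says nothing about $Q_1$.)

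\textbf{Inductive step.} Two problems. First, you set $W_k=f_{k-1}^k(\overline{U_{k+1}})-\tau_k$ with no rescaling, but $\varphi_U^{-1}(\partial U_{k+1})$ has radius $1+\rho_{k+1}$, so $\partial W_k$ sits near that circle rather than the unit circle; the hypothesis $\partial W\subset\mathbb{A}\bigl(\tfrac{1}{1+\varepsilon},1+\varepsilon\bigr)$ of Lemmas~\ref{lem:war}/\ref{lem:riemann_approx}/\ref{lem:small_disks} is not met for the (very small) $\varepsilon_k$ you need --- the paper therefore works with $V_{k+1}=\varphi_U(\D_{r_k})$ and divides by $r_k$ so the image really is an $O(3^k/\tau_k)$-perturbation of the unit disk. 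Second, your three-branch $h_k$ is internally inconsistent: the disks $B\bigl(f_{k-1}^k(a),\rho_k\bigr)$ for $a\in Q_k\subset\partial U_k$ sit at distance about $C_U^{-1}(\rho_k-\rho_{k+1})\le\rho_k$ from $\partial\bigl(W_k+\tau_k\bigr)$, so they overlap (or nearly touch) the region where you also prescribe $\tilde\varphi_k^{-1}(\cdot-\tau_k)+\tau_{k+1}$. Hence the branches are not $4\varepsilon$-separated and Proposition~\ref{prop:chi} does not produce the cut-off $\chi$. This is exactly why the paper performs a two-stage gluing: Lemma~\ref{lem:small_disks} first produces a single \emph{entire} $g_k$ that simultaneously approximates $\varphi_k^{-1}+\tau$ on the inner disk and sends the peripheral disks near $-A$ (using the puncture weight rather than a separate model branch), and only then is $g_k$ glued to $f_{k-1}$ via a two-region $h_k$ whose supports near $0$ and near $\tau_k$ are genuinely far apart. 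Folding the attracting-disk behaviour into a third branch of $h_k$, as you propose, cannot work.
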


\paragraph*{Outline of the proof.}
We construct the sequence recursively.

\emph{Base step ($k=1$).} The first step is slightly different, since Lemma \ref{lem:small_disks} only applies to domains that are \textit{almost round}, which {might not be the case} for the domains $U_k.$ Thus, in this step we \emph{straighten} $U$ and the supersets $U_k$ with the map $\varphi_U$, turning $\partial U_k$ to be uniformly close to a circle (up to a fixed bi-Lipschitz distortion). We then pick the initial subharmonic weight $u_1$ from Proposition~\ref{prop:sh_u1} below, which is strongly negative exactly where accuracy is needed and grows like $|z|^{\alpha}$ elsewhere. With this geometry and $u_1$ in place, we glue the first model map using a cutoff function, $\chi_1$, supported on thin annuli and the narrow collar $U_0\setminus V_1$, and correct the resulting $\bar\partial$–error using the method described in Subsection \ref{subsec:method}, obtaining $f_1$ with properties in \ref{itm:growth}-\ref{itm:attracting}; see Figure~\ref{fig:first_function}.

\emph{Inductive step ($k\ge2$).} From now on the geometry is normalized: we work on `almost–round' disks. On the almost–round disk containing $f_{k-1}^k(U_{k+1})$, we apply the Local Lemma, Lemma~\ref{lem:small_disks}, to build a model map that sends small disks near the images $f_{k-1}^k(a_j^k)$ into a neighbourhood of $\bb{-\tau_1}$, which by construction will be contained in an attracting basin, and (after the standard rescaling), approximates $\varphi_U^{-1}(z)+\tau_{k+1}$ on $U_{k+1}$; see Figure \ref{fig:induction}. We choose a cut-off functions, $\chi_k$, with $\bar\partial\chi_k$ supported in thin annuli and use the corresponding weight $u_k$ (a $|z|^{\alpha}$–type weight punctured at $0$ and at $\tau_k$). The uniform control of `roundness' across stages and the growth of $\tau_k^{\alpha}$ make the $\bar\partial$–correction increasingly small, so the error and derivative bounds improve with $k$. This yields properties \ref{itm:consistency}-\ref{itm:attracting} for $f_k$ and completes the construction.

\subsection{Proof of Theorem \ref{thm:sequence} for $k=1$:}
\paragraph*{The model map}~\\
Define the model map
\begin{equation}\label{eq:h1}
	\begin{aligned}
		h_1(z)=	\begin{cases}
			-\tau_1,& z\in B\bb{-\tau_1,\frac{\tau_1}{3}},\\
			\varphi_U\inv(z)+\tau_{1},& z\in U_0,\\
			z+\bb{\tau_2-\tau_1},& z\in B\bb{\tau_1,\frac{\tau_1}3},\\
			0,& \text{otherwise}.
		\end{cases}
	\end{aligned}
\end{equation}
For the rest of the section, we will denote 
$$A_U:=\diam(U_0)+1, $$
and define the annuli
\begin{equation}\label{eq_annuli_f1}
	\mathcal{A}_1=\left\{\,z\in\mathbb{C}\colon  \frac{\tau_1}{4}<|z+\tau_1|<\frac{\tau_1}{3}\right\} \quad\text{ and } \quad  \mathcal{A}_2= \left\{\,z\in\mathbb{C}\colon  \frac{\tau_1}{4}<|z-\tau_1|<\frac{\tau_1}{3}\right\}.
\end{equation}

\begin{figure}[h]
	\centering
	\def\svgwidth{\textwidth}
	\begingroup%
	\makeatletter%
	\providecommand\color[2][]{%
		\errmessage{(Inkscape) Color is used for the text in Inkscape, but the package 'color.sty' is not loaded}%
		\renewcommand\color[2][]{}%
	}%
	\providecommand\transparent[1]{%
		\errmessage{(Inkscape) Transparency is used (non-zero) for the text in Inkscape, but the package 'transparent.sty' is not loaded}%
		\renewcommand\transparent[1]{}%
	}%
	\providecommand\rotatebox[2]{#2}%
	\newcommand*\fsize{\dimexpr\f@size pt\relax}%
	\newcommand*\lineheight[1]{\fontsize{\fsize}{#1\fsize}\selectfont}%
	\ifx\svgwidth\undefined%
	\setlength{\unitlength}{2715.59055118bp}%
	\ifx\svgscale\undefined%
	\relax%
	\else%
	\setlength{\unitlength}{\unitlength * \real{\svgscale}}%
	\fi%
	\else%
	\setlength{\unitlength}{\svgwidth}%
	\fi%
	\global\let\svgwidth\undefined%
	\global\let\svgscale\undefined%
	\makeatother%
	\begin{picture}(1,0.34655532)%
		\lineheight{1}%
		\setlength\tabcolsep{0pt}%
		\put(0,0){\includegraphics[width=\unitlength,page=1]{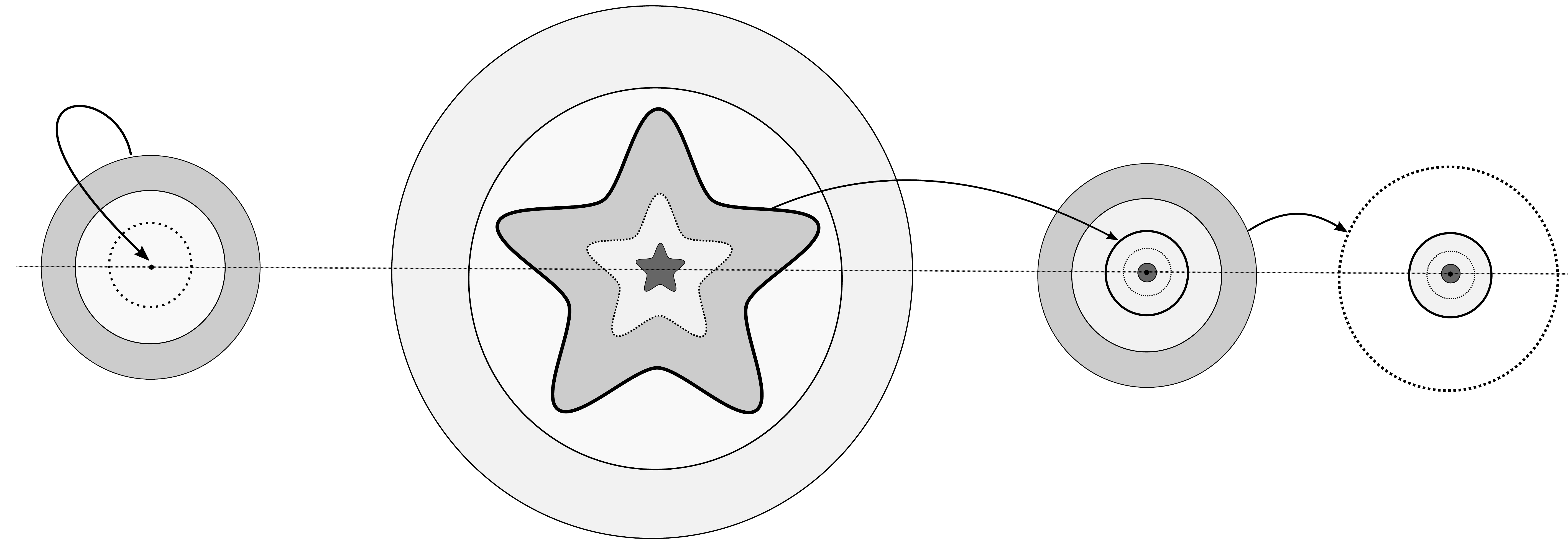}}%
		\put(0.35892444,0.30097894){\color[rgb]{0,0,0}\makebox(0,0)[lt]{\lineheight{1.25}\smash{\begin{tabular}[t]{l}$\fontsize{9pt}{1em}B(0,A_U+1)$\end{tabular}}}}%
		\put(0.40932982,0.23984057){\color[rgb]{0,0,0}\makebox(0,0)[lt]{\lineheight{1.25}\smash{\begin{tabular}[t]{l}$\fontsize{9pt}{1em}U_0$\end{tabular}}}}%
		\put(0.41284514,0.19418971){\color[rgb]{0,0,0}\makebox(0,0)[lt]{\lineheight{1.25}\smash{\begin{tabular}[t]{l}$\fontsize{9pt}{1em}V_1$\end{tabular}}}}%
		\put(0.7207967,0.13279838){\color[rgb]{0,0,0}\makebox(0,0)[lt]{\lineheight{1.25}\smash{\begin{tabular}[t]{l}$\fontsize{9pt}{1em}\tau_1$\end{tabular}}}}%
		\put(0.91764225,0.13199166){\color[rgb]{0,0,0}\makebox(0,0)[lt]{\lineheight{1.25}\smash{\begin{tabular}[t]{l}$\fontsize{9pt}{1em}\tau_2$\end{tabular}}}}%
		\put(0.08007614,0.16005886){\color[rgb]{0,0,0}\makebox(0,0)[lt]{\lineheight{1.25}\smash{\begin{tabular}[t]{l}$\fontsize{9pt}{1em}-\tau_1$\end{tabular}}}}%
		\put(0.10409366,0.22671601){\color[rgb]{0,0,0}\makebox(0,0)[lt]{\lineheight{1.25}\smash{\begin{tabular}[t]{l}$\fontsize{9pt}{1em}\mathcal{A}_1$\end{tabular}}}}%
		\put(0.73369143,0.22273475){\color[rgb]{0,0,0}\makebox(0,0)[lt]{\lineheight{1.25}\smash{\begin{tabular}[t]{l}$\fontsize{9pt}{1em}\mathcal{A}_2$\end{tabular}}}}%
		\put(0.58295665,0.24059833){\color[rgb]{0,0,0}\makebox(0,0)[lt]{\lineheight{1.25}\smash{\begin{tabular}[t]{l}$\fontsize{9pt}{1em}\varphi^{-1}_U+\tau_1$\end{tabular}}}}%
	\end{picture}%
	\endgroup%
	
	\caption{First step: straightening using the Riemann map $\varphi_U\inv$, the domains are straightened so that $\partial U$ becomes circular and collars become annuli. The dark grey region indicates $\supp(\bar\partial\chi)$, confined to the thin annuli, $\mathcal{A}_1,\mathcal{A}_2$, and a narrow collar, $U_0\setminus V_1$. The subharmonic weight, $u_1$, is very negative on the regions where we require accurate approximation (namely on $B(\pm\tau_1,4)$ and on $\overline{U_1}$), while outside these sets it behaves essentially like $|z|^{\alpha}$. Thus the gluing occurs where $\bar\partial\chi_1\neq 0$ (the dark areas in the figure), which is where $u_1$ is large, away from the approximation zones where $u_1$ is small and controls the contribution of the $\bar\partial$–error.}\label{fig:first_function}
\end{figure}

\paragraph*{The subharmonic function}
\begin{prop} \label{prop:sh_u1}There exists a subharmonic function $u_1\colon \C\to \R$ with the following properties:
	\begin{enumerate}
		\item \label{item:prop_sh_def} If $z\nin B\bb{-\tau_1,\frac{\tau_1}{4}}\cup B\bb{0,A_U+1}\cup B\bb{\tau_1,\frac{\tau_1}4}$, then $u_1(z)=\tau_1\cdot  \bb{\abs z-A_U}^{\alpha}$.
		\item\label{item:prop_sh_bound2} If $z\in \overline{U_0}\setminus V_1$, then $u_1(z)>0$.
		\item \label{item:prop_sh_bound1} If $z\in  B(-\tau_1, 4) \cup \overline{U}_1 \cup  B(\tau_1, 4)$, then $u_1(z)\leq -3\tau_1^\alpha$.
	\end{enumerate}
\end{prop}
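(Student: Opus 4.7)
The plan is to build $u_1$ in two phases: first construct a radial base subharmonic function $\bar u_0$ on $\C$ that realises property~(1) and already satisfies~(2) on the collar, and then apply the Puncture Lemma (Lemma~\ref{lem:punctures}) on three pairwise disjoint disks---two large disks around $\pm\tau_1$ and a small disk strictly inside $V_1$ containing $\overline{U_1}$---to enforce property~(3) without disturbing the collar or the asymptotic formula.

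For the base function, I set $\bar u_0(z):=\tau_1\,g(|z|)$, where $g\colon [0,\infty)\to \R$ is a nondecreasing radial profile with $g(r)=(r-A_U)^\alpha$ for $r\ge A_U+1$ and $g(r)\ge 1$ on $[0,A_U+1]$, chosen so that the radial subharmonicity condition $(r\,g'(r))'\ge 0$ holds for all $r\ge 0$. A direct differentiation shows $(rg')'=\alpha(r-A_U)^{\alpha-2}(r\alpha-A_U)$ on $\{r>A_U\}$, which is nonnegative exactly when $r\ge A_U/\alpha$; in the transition zone $[A_U,A_U+1]$, where the bare formula may fail this condition, I would replace $(r-A_U)^\alpha$ by the upper envelope obtained via Corollary~\ref{cor:gluing} with a strongly subharmonic radial bump such as $\lambda\bigl((A_U+1)^2-r^2\bigr)_+$ for a large constant $\lambda$, producing a single subharmonic $g$ that agrees with $(r-A_U)^\alpha$ for $r\ge A_U+1$ and remains bounded below by $1$ on $[0,A_U+1]$. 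This realises property~(1)---outside the three disks listed, $u_1$ will equal $\bar u_0=\tau_1(|z|-A_U)^\alpha$---and will realise property~(2), since $\overline{U_0}\setminus V_1\subset B(0,A_U+1)$ lies outside the three puncture disks I choose below, so $u_1$ coincides there with $\bar u_0\ge \tau_1>0$.

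Next, I apply Lemma~\ref{lem:punctures} with three disks $B_\pm=B(\pm\tau_1,\tau_1/4)$ and $B_0=B(0,R_0)$, where $R_0$ is chosen so that $\overline{U_1}\subset B(0,R_0)\subset V_1$; this is possible via the bi-Lipschitz estimate~\eqref{eq:derivative_phiU} for $\varphi_U$ provided $\rho_0,\rho_1$ are small enough, which holds for $\tau_1$ large. The three disks are pairwise disjoint once $\tau_1>4R_0/3$. The resulting $u_1$ is subharmonic, equals $\bar u_0$ outside $B_-\cup B_0\cup B_+$ (so~(1) holds, since these disks sit inside the three disks listed in the statement), and on each target region the puncture estimate reads
\[
\max_{B(z_k,\delta_k r_k)} u_1\;\le\; \max_{B_k}\bar u_0 - c\,r_k^2\,\Bigl(\inf_{B_k}\Delta\bar u_0\Bigr)\,\log\frac{1}{\delta_k}.
\]
On $B_\pm$, taking $\delta_\pm=16/\tau_1$ so that $B(\pm\tau_1,\delta_\pm r_k)=B(\pm\tau_1,4)$, and using $\max\bar u_0|_{B_\pm}\le C\tau_1^{1+\alpha}$ together with $\inf_{B_\pm}\Delta \bar u_0\ge c'\alpha^2\tau_1^{\alpha-1}$ (since $|z|\asymp \tau_1\gg A_U$), this gives an upper bound of order $-\alpha^2\tau_1^{1+\alpha}\log\tau_1$, well below $-3\tau_1^\alpha$ for $\tau_1$ large. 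On $B_0$, take $\delta_0$ so that $\delta_0 R_0$ covers $\overline{U_1}$ (a constant in $(0,1)$ by the bi-Lipschitz bound); with $\max\bar u_0|_{B_0}\le C_1\tau_1$ and $\inf_{B_0}\Delta\bar u_0\ge c_0\tau_1$ (from the bump in the construction of $g$), the bound gives $\sup_{\overline{U_1}}u_1\le\tau_1\bigl(C_1-c\,R_0^2\,c_0\,\log(1/\delta_0)\bigr)$, which is below $-3\tau_1^\alpha$ provided $c_0$ was chosen large enough at the start.

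The chief obstacle is the construction of the profile $g$: the Laplacian of $(r-A_U)^\alpha$ is negative on the band $r\in(A_U,A_U/\alpha)$ whenever $\alpha<A_U/(A_U+1)$, so one cannot simply use the bare formula all the way down to $r=A_U+1$. The gluing-with-bump step resolves both this subharmonicity issue and the related quantitative requirement of a uniformly large radial Laplacian on $B(0,R_0)$---without this, the inner puncture only drops $u_1$ below $0$, not below $-3\tau_1^\alpha$. A secondary, easier point is the pairwise disjointness of the three puncture disks, which is obtained by possibly enlarging $\tau_1$ by a constant depending only on $U$ and $\alpha$.
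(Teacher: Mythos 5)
Your proposal diverges from the paper's proof in an essential way, and the divergence is where the argument breaks: you try to handle the $\overline{U_1}$ part of property~(3) by applying the Puncture Lemma at a third disk $B_0=B(0,R_0)$ chosen so that $\overline{U_1}\subset B(0,R_0)\subset V_1$, and you assert that such a disk exists for $\tau_1$ large via the bi-Lipschitz estimate. This is false for a general non-round domain $U$. As $\tau_1\to\infty$ the parameters $\rho_0,\rho_1\to 0$, so both $\overline{U_1}$ and $V_1$ shrink onto $\overline{U}$; if $U$ is not a disk (say a long thin region), then for all large $\tau_1$ the circumradius of $\overline{U_1}$ about the origin strictly exceeds the inradius of $V_1$, and no disk centred at $0$ (or anywhere) can be sandwiched between them. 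The bi-Lipschitz bound \eqref{eq:derivative_phiU} is a fixed constant depending only on $U$ and cannot rescue this. And you do need $B_0\subset V_1$, because the Puncture Lemma gives no lower bound inside the puncture disk, so if $B_0$ met the collar $\overline{U_0}\setminus V_1$ you could not guarantee property~(2). This is precisely the difficulty the paper's proof is designed to avoid: it never punctures near $U_1$, but instead builds $v_1$ on $V_1$ as $C_1\log\!\bigl(|\phi_1(z)|/\tilde r\bigr)$ with $\phi_1=\varphi_U^{-1}$, so the level sets of the weight follow the (generally non-circular) curves $\varphi_U(\partial\D_\rho)$; choosing $C_1=3\tau_1^\alpha/\log\bigl(\tilde r/(1+\rho_1)\bigr)$ then forces $v_1\le -3\tau_1^\alpha$ on $\overline{U_1}$ directly, and the Puncture Lemma is used only at the two disks $B(\pm\tau_1,\tau_1/4)$, which genuinely are disks. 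To repair your approach you would have to replace the radial inner weight by a conformally adapted one, which is essentially what the paper does.

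A secondary, smaller issue: the radial bump $\lambda\bigl((A_U+1)^2-r^2\bigr)_+$ you propose has Laplacian $-4\lambda<0$, i.e.\ it is superharmonic, not ``strongly subharmonic,'' and taking a pointwise maximum with a superharmonic function does not preserve subharmonicity; Corollary~\ref{cor:gluing} requires both pieces to be subharmonic. Something like $1+\lambda r^2$ on the inner region would fix this local point (it is subharmonic with $\Delta=4\lambda$ and bounded below by $1$), but even with that repair the main geometric obstruction above remains.
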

\begin{proof}
	Denote $\phi_1:=\varphi_U^{-1}\vert_{U_0}\colon U_0\rightarrow\D_{1+\rho_0}$  and let $\phi_2:\widehat{\C}\setminus V_1\rightarrow\widehat{\C}\setminus \overline{\D}$ be a Riemann map. Fix $\tilde{r}\in \left(1+\rho_1, r_0\right)$, and define the map $v_1\colon \C\to \R$ as
	$$
	v_1(z):=	\begin{cases}
		C_1 \log\bb{\frac{\abs{\phi_1(z)}}{\tilde{r}}},& z\in V_1,\\
		\max\bset{C_1\cdot\log\bb{\frac{\abs{\phi_1(z)}}{\tilde{r}}} ,C_2\cdot\log\bb{\abs{\phi_2(z)}}},& z\in \overline{U_0}\setminus V_1,\\
		C_2\cdot \log\bb{\abs{\phi_2(z)}},& z\in B(0,A_U)\setminus U_0,\\
		\max\bset{C_2\cdot \log\bb{\abs{\phi_2(z)}},\tau_1\bb{\abs z-A_U}^{\alpha}},& z\in B(0,A_U+1)\setminus B(0,A_U),\\
		\tau_1\bb{\abs z-A_U}^{\alpha},& z\in\C\setminus B(0,A_U+1),
	\end{cases}
	$$
	where the constants $C_1$ and $C_2$ will be chosen momentarily to guarantee that the map $v_1$ is subharmonic. We will prove this fact using Corollary \ref{cor:gluing}. We start by choosing $C_1:= \frac{3\tau_1^\alpha}{\log\bb{\frac{\tilde{r}}{1+\rho_1}}}
	=C(\tau_1)>0$. Note that
	$$
	\underset{z\in \overline{U_1}}\max\; v_1(z)<C_1\log\bb{\frac{1+\rho_1}{\tilde{r}}}{=} \frac{3\tau_1^\alpha}{\log\bb{\frac{\tilde{r}}{1+\rho_1}}}\cdot \log\bb{\frac{1+\rho_1}{\tilde{r}}}=-3\tau_1^\alpha.
	$$
	
	Next, note that for every $z\in \overline{U_0}\setminus V_1$, 
	\begin{equation}\label{eq:U_1}
		C_1\cdot\log\bb{\frac{\abs{\phi_1(z)}}{\tilde{r}}}\geq C_1 \log\bb{\frac{r_0}{\tilde{r}}}\geq C_1>0,
	\end{equation}
	while if $z\in \partial V_1$, by definition, $\abs{\phi_2(z)}=1$, implying that $C_1\log\bb{\frac{\abs{\phi_1(z)}}{\tilde{r}}}>0=C_2\log\bb{\abs{\phi_2(z)}}=0$, no matter what $C_2$ is. On the other hand, if we choose $C_2:=\frac{2C_1\log\bb{\frac{1+\rho_0}{\tilde{r}}}}{\underset{z\in\partial U_0}\inf\; \log\bb{\abs{\phi_2(z)}}}$, then for every $z\in\partial U_0$,
	$$
	C_1\log\bb{\frac{\abs{\phi_1(z)}}{\tilde{r}}}=C_1\log\bb{\frac{1+\rho_0}{\tilde{r}}}=\frac{C_2}{2}\cdot \underset{w\in\partial U_0}\inf\; \log\bb{\abs{\phi_2(w)}}<C_2\cdot \underset{w\in\partial U_0}\inf\; \log\bb{\abs{\phi_2(w)}}\le C_2\log\bb{\abs{\phi_2(z)}}.
	$$
	We conclude that $v_1$ is subharmonic in $B(0,A_U)$. 
	
	Similarly, for every $\abs z=A_U$, 
	$$
	\tau_1\bb{\abs z-A_U}^{\alpha}=0<C_2\log\bb{\abs{\phi_2(z)}},
	$$
	no matter what $\tau_1>0$ is. Lastly, for every $\abs z=\bb{A_U+1}$,
	$$
	\tau_1\bb{\abs z-A_U}^{\alpha}=\tau_1>C_2\cdot \underset{\abs z=A_U+1}\sup\;\log\bb{\abs{\phi_2(z)}},
	$$
	if $\tau_1$ is large enough since
	\begin{align*}
		\tau_1> C_2\cdot \underset{\abs z=A_U+1}\sup\;\log\bb{\abs{\phi_2(z)}}=2\cdot\frac{C_1\log\bb{\frac{1+\rho_0}{\tilde{r}}}}{\underset{z\in\partial U_0}\inf\; \log\bb{\abs{\phi_2(z)}}} \underset{\abs z=A_U+1}\sup\;\log\bb{\abs{\phi_2(z)}}\\
		=6\tau_1^\alpha\cdot \frac{\log\bb{\frac{1+\rho_0}{\tilde{r}}}}{\log\bb{\frac{\tilde{r}}{1+\rho_1}}}\cdot \frac{\underset{\abs z=A_U+1}\sup\;\log\bb{\abs{\phi_2(z)}}}{\underset{z\in\partial U_0}\inf\; \log\bb{\abs{\phi_2(z)}}} =\tau_1^\alpha\cdot C(U),
	\end{align*}
	where $C(U)$ is a constant which depends on the domain $U$ (and the choice of $\tilde r$) alone, and $\alpha<1$ therefore we can always choose $\tau_1$ large enough so that $\tau_1>C(U)\cdot{\tau_1}^\alpha$, making $v_1$ a subharmonic function in $\C$.
	
	Let $u_1\colon \mathbb{C}\to \R$ be the subharmonic function obtained by applying the puncture lemma, Lemma \ref{lem:punctures}, with the disks $B\bb{-\tau_1,\frac{\tau_1}4}$, and  $B\bb{\tau_1,\frac{\tau_1}4}$, and the underling map $v_1$ constructed above. 
	
	We shall now see that the map $u_1$ satisfies the properties in the statement.  Choosing $\tau_1>\frac{4}{3}\bb{A_U+1}$, the disks $B\bb{-\tau_1,\frac{\tau_1}4}$, $B(0, A_U+1)$ and $B\bb{\tau_1,\frac{\tau_1}4}$ are pairwise disjoint, and by definition, $u_1(z)=v_1(z)=\tau_1\bb{\abs z-A_U}^{\alpha}$ for $z$ outside these domains, i.e., \eqref{item:prop_sh_def} holds. Next, since $\overline{U_0}\subset B(0, A_U+1)$, $u_1\equiv v_1$ in that domain. Moreover, on $\overline{U_0}\setminus V_1$, $u_1(z)=v_1(z)>0$ by \eqref{eq:U_1}, i.e.,  property \eqref{item:prop_sh_bound2} is satisfied.
	
	Note that property \eqref{item:prop_sh_bound1} already holds on $\overline{U}_1$ by the way the constant $C_1$ was chosen and as $\left.u_1\right|_{\overline{U_1}}\equiv\left.v_1\right|_{\overline{U_1}}$. To see that it holds on the disks $B\bb{-\tau_1,\frac{\tau_1}4}$ and $B\bb{\tau_1,\frac{\tau_1}4}$, note that whenever $\abs z\ge\frac{3\tau_1}4,$
	\begin{align*}
		\Delta v_1&=\tau_1\bb{\alpha^2-\alpha}\bb{\abs z-A_U}^{\alpha-2}+\frac1{\abs z}\cdot\alpha \tau_1\bb{\abs z-A_U}^{\alpha-1}=\frac{\tau_1\alpha^2}{\bb{\abs z-A_U}^{2-\alpha}}+\tau_1\cdot\alpha\bb{\abs z-A_U}^{\alpha-1}\bb{\frac1{\abs z}-\frac1{\abs z-A_U}}\\
		&=\frac{\tau_1\alpha^2}{\bb{\abs z-A_U}^{2-\alpha}}-\frac{\tau_1\cdot\alpha\cdot A_U}{\abs z\bb{\abs z-A_U}^{2-\alpha}}=\frac{\tau_1\alpha^2}{\bb{\abs z-A_U}^{2-\alpha}}\bb{1-\frac{A_U}{\alpha\abs z}}\ge \frac{\tau_1\alpha^2}{2\bb{\abs z-A_U}^{2-\alpha}}
	\end{align*}
	assuming $\tau_1>\frac{8A_U}{3\alpha}$ to guarantee that the last inequality holds for $\abs z\ge\frac{3\tau_1}4.$
	
	Using this and part \eqref{item:punctures} or Lemma \ref{lem:punctures} with $\delta=\frac{16}{\tau_1}$, $\tau_1>16$, and $\pm\tau_1$ standing for either $-\tau_1$ or $\tau_1$,
	\begin{align*}
		\underset{z\in B\bb{\pm\tau_1,4}}\max\;u_1(z)&\le \underset{z\in B\bb{\pm\tau_1,\frac{\tau_1}4}}\max\; v_1(z)-c\cdot\bb{\frac {\tau_1}{4}}^2\cdot \bb{\underset{z\in  B\bb{\pm\tau_1,\frac{\tau_1}4}}\inf\; \Delta u_1}\cdot \log\bb{\frac{\tau_1}{16}}\\
		&\le \tau_1\bb{\bb{\tau_1+\frac{\tau_1}4-A_U}^\alpha-c\cdot \bb{\frac {\tau_1}{4}}^2\cdot \frac{\alpha^2}{2\bb{\tau_1+\frac {\tau_1}4-A_U}^{2-\alpha}}\log\bb{\frac{\tau_1}{16}}},\\
		&\le \tau_1\bb{\frac54}^\alpha  \tau_1^\alpha\bb{\bb{1-\frac{4A_U}{5\tau_1}}^\alpha- \frac{c\cdot\alpha^2}{32}\bb{\frac54}^{-2} \bb{1-\frac{4A_U}{5\tau_1}}^{\alpha-2}\log\bb{\frac{\tau_1}{16}}}\le-3\tau_1^\alpha,
	\end{align*}
	for $\tau_1$ chosen to be large enough, depending on the fixed constants $c$, $\alpha$, and $A_U$, and using that $\tau_1>1$, concluding the proof of \eqref{item:prop_sh_bound1}, and of the proposition.
\end{proof}

\paragraph*{Bounding the integral}~\\
Following the definition of the annuli $\mathcal{A}_1$ and $\mathcal{A}_2$ in \eqref{eq_annuli_f1} and Proposition~\ref{prop:chi}, let $\chi_1\colon \C\rightarrow[0,1]$ be a smooth function such that
\begin{enumerate}
	\item If $z \in B(-\tau_1,\frac{\tau_1}4)\cup V_1\cup B(\tau_1,\frac{\tau_1}4) $, then $\chi_1(z)=1$.
	\item If $z \notin B(-\tau_1,\frac{\tau_1}3)\cup U_0\cup B(\tau_1,\frac{\tau_1}3)$, then $\chi_1(z)=0$.
	\item 	$$
	\abs{\nabla\chi_1(z)}\le 
	\begin{cases}
		\frac D{\tau_1},& z\in \mathcal{A}_1\cup \mathcal{A}_2,\\
		D,& z\in U_0\setminus V_1,\\
		0,& \text{otherwise},
	\end{cases}
	$$
	for some constant $D>1$ that depends on $\dist(\partial U_0, \partial V_1)$, and on $C_U$, the bi-Lipschitz constant of the map~$\varphi_U$.
\end{enumerate}

Note that if $z\in \mathcal{A}_1\cup \mathcal{A}_2$, then, by part \eqref{item:prop_sh_def} of Proposition \ref{prop:sh_u1},
$$u_1(z)= \tau_1\bb{\abs z-A_U}^{\alpha}\geq \tau_1\bb{\frac{\tau_1}{4}-A_U}^{\alpha}\geq \tau_1$$
if $\tau_1$ is large enough. In addition, following condition \eqref{cond:2}, $\tau_2^{11}\le\exp\bb{\tau_1^\alpha}$ implying that
$$
\tau_2^2\cdot e^{-\tau_1^\alpha}\le e^{-\tau_1^\alpha\bb{1-\frac2{11}}}<1.
$$
Using this, and the fact that the area  of $\mathcal{A}_1\cup \mathcal{A}_2$ is proportional to $\tau_1^2$
\begin{align*}
	\iint_\C\abs{\bar\partial\chi_1(z)\cdot h(z)}^2e^{-u_1(z)}dm(z)&=\iint_{\mathcal{A}_1\cup \mathcal{A}_2\cup U_0\setminus V_1}\abs{\bar\partial\chi_1(z)}^2\abs{h(z)}^2e^{-u_1(z)}dm(z)\\
	&\le \frac {D^2}{\tau_1^2}\bb{\iint_{\mathcal{A}_1}\abs{-\tau_1}^2e^{-\tau_1}dz+\iint_{\mathcal{A}_2}\bb{\frac{4\tau_1}3+\tau_2}^2e^{-\tau_1}dz}+D^2\iint_{U_0\setminus V_1}\bb{\tau_1+2}^2e^{-u_1(z)}dz\\
	&\le D^2\left(2\tau_2^2e^{-\tau_1}+\tau_1^2\right)<D_1^2\cdot\tau_1^2:=\mathcal I^2.
\end{align*}

Following Proposition \ref{prop:method}, let $f_1$ be the entire function defined by $f_1(z)=\chi_1(z)\cdot h_1(z)-\beta_1(z)$, where $\beta_1$ is H\"ormander's solution to the $\bar\partial$-equation $\bar\partial\beta_1=\bar\partial\chi_1\cdot h_1$. 

\paragraph*{Estimating the growth (property \ref{itm:growth}):} Let $z\ge 2\tau_1$. Note that following Proposition \ref{prop:sh_u1}, if $\abs{z-w}<1$ then
\begin{align*}
	\abs{u_1(w)-u_1(z)}&=\tau_1\abs{\bb{\abs w-A_U}^\alpha-\bb{\abs z-A_U}^\alpha}\le \tau_1\bb{\bb{\abs z+1-A_U}^\alpha-\bb{\abs z-A_U}^\alpha}\\
	&=\tau_1\abs z^\alpha\bb{1-\frac{A_U}{\abs z}}^\alpha\bb{\bb{1+\frac1{\abs z-A_U}}^\alpha-1}\le 2\tau_1\cdot\abs z^{-(1-\alpha)}
\end{align*}
since by Bernoulli's inequality
$$
\bb{1-\frac{A_U}{\abs z}}^\alpha\bb{\bb{1+\frac1{\abs z-A_U}}^\alpha-1}\le 1\cdot\bb{1+\frac\alpha{\abs z-A_U}-1}<\frac2{\abs z}.
$$
Combining this with part \ref{item:growth_bound} of Proposition~\ref{prop:method},
$$
\abs{f_1(z)}^2e^{-u_1(z)}\le\bb{\frac{2+2\abs z^2}{\sqrt \pi}}^2\cdot D_1^2\cdot\tau_1^2 \cdot \underset{w\in B(z,1)}\max\; e^{u_1(w)-u_1(z)}\le \frac{4}{\pi}\abs z^4\bb{1+\frac1{\abs z^2}}^2\cdot D_1^2\cdot\tau_1^2\cdot 2\tau_1\cdot\abs z^{-(1-\alpha)}<15\cdot \abs z^4\tau_1^6
$$
if $\tau_1$ is large enough concluding the proof of property \ref{itm:growth} for $f_1$. 

\paragraph*{Iterations (property \ref{itm:iterations}):}
Let $z\in \overline{U_2}$, and let $r:=\frac12\dist\bb{\partial U_2,\partial V_1}$, then $B(z,r)\subset V_1$ and $\chi_1|_{B(z,r)}\equiv 1$. Then, by part \ref{item:error_bound} of Proposition~\ref{prop:method} and part \eqref{item:prop_sh_bound1} of Proposition \ref{prop:sh_u1}, we have

\begin{equation}\label{eq:first_c}
	\abs{f_1(z)-\bb{\varphi_U\inv(z)+\tau_{1}}}=\abs{f_1(z)-h_1(z)}\le \frac{2+2\cdot \diam(U_0)^2}{r }\cdot D_1\cdot\tau_1\cdot e^{-3\tau_1^\alpha}\le e^{-\tau_1^\alpha}<\frac{1}{\tau_2},
\end{equation}
if $\tau_1$ is large enough and using \eqref{cond:2}, which proves \ref{itm:iterations}\ref{subitm:first}.

Additionally, let $w\in B\bb{\tau_1,3}$, then $B(w,1)\subset B\bb{\tau_1,4}$ and $\chi_1|_{B(w,1)}\equiv 1$. Arguing similarly,
\begin{equation}\label{eq:second_c}
	\abs{f_1(w)-\bb{w+\bb{\tau_2-\tau_1}}}=\abs{f_1(w)-h_1(w)}\le 2\bb{1+\bb{\tau_1+3}^2}\cdot D_1\cdot\tau_{1}\cdot e^{-3\tau_1^\alpha}\le e^{-\tau_1^\alpha}<\frac{1}{\tau_2},
\end{equation} 
taking $\tau_1$ to be large enough. Next, let $z\in\overline{U_2}$. Combining \eqref{eq:first_c} and \eqref{eq:second_c} with $w=f_1(z)$ we have
\[
\abs{f_1^{2}(z)-\big(\varphi_U^{-1}(z)+\tau_2\big)}
\le \abs{f_1^{2}(z)-\big(f_1(z)+(\tau_2-\tau_1)\big)}
+\abs{f_1(z)-\big(\varphi_U^{-1}(z)+\tau_1\big)}\\
< \frac{2}{\tau_2}
< \frac{9}{\tau_2}.
\]
This concludes the proof of property \ref{itm:iterations}.

\paragraph*{Derivatives (property \ref{itm:derivative})}
Using Cauchy's integral formula for derivatives, see \eqref{eq:derivative}, if $z\in\overline{U_2}$, then $B(z,r)\subset U_2^{+r}$ where $r=\frac12\dist\bb{\partial U_2,\partial V_1}$, and so, 
$$
\abs{f'_1(z)-\left(\varphi_U\inv\right)'}=\abs{f_1'(z)-h_1'(z)}\le\frac1{r}\cdot \underset{z\in U_2^{+r}}\sup\;\abs{h_1(z)-f_1(z)}<\frac{1}{\tau_2},
$$
following \eqref{eq:first_c}. Combining this with \eqref{eq:derivative_phiU}, the first item of \ref{itm:derivative} follows. 

Similarly, since for $z\in\overline{U_2}$ we know that $B(f_1(z), 1)\subset B(\tau_1,3)$, for any $w\in B(f_1(z), 1)$,
$$
\abs{f_1'(w)-1}=\abs{f_1'(w)-h_1'(w)}\le \underset{\zeta\in B(\tau_1,3)}\sup\;\abs{h_1(\zeta)-f_1(\zeta)} \le e^{-\tau_1^\alpha}<\frac{1}{\tau_2},
$$
following \eqref{eq:second_c} and concluding the proof of \ref{itm:derivative}.

\paragraph*{Attracting basin (property \ref{itm:attracting}):}
Let $z\in B\bb{-\tau_1,1}$, then $B(z,1)\subset B\bb{-\tau_1,3}$ and $\chi_1|_{B(z,1)}\equiv 1$. Then, by part \ref{item:error_bound} of Proposition~\ref{prop:method} and part \eqref{item:prop_sh_bound1} of Proposition \ref{prop:sh_u1},
$$
\abs{f_1(z)-\bb{-\tau_1}}=\abs{f_1(z)-h_1(z)}\le 2\bb{1+\bb{\tau_1+2}^2}\cdot D_1\cdot \tau_{1}\cdot e^{-3\tau_1^\alpha}\le e^{-\tau_1^\alpha}<\frac{1}{\tau_2},
$$
if $\tau_1$ is large enough, proving \ref{itm:attracting} and concluding the proof of Theorem \ref{thm:sequence} for $k=1$.
\subsection{Proof of Theorem \ref{thm:sequence} for $k\ge 2$:}
We now assume that there exist functions $\bset{f_j}_{j=1}^{k-1}$ for which the theorem holds and we construct $f_k$.

\paragraph*{The model map}~\\
Roughly speaking, we enforce the following behaviour: On the disk, \(B\!\left(0,\tfrac{10}{36}\tau_k\right)\), which contains the first \(k\) forward images of
\(U_{k+1}\) under \(f_{k-1}\), the map \(f_k\) closely matches \(f_{k-1}\).
By the inductive hypothesis, \ref{itm:iterations}, \(f_{k-1}^{\,k}(V_{k+1})\) is close to a round disk centred at
\(\tau_k\). Our aim is to define \(f_k\) so that it maps \(\overline{U}_{k+1}\subset V_{k+1}\) to a set
that is \emph{almost} a disk centred at \(\tau_{k+1}\), while simultaneously sending neighbourhoods of the
points \(f_{k-1}^{\,k}(a_j^k)\) into a neighbourhood of \(\bb{-\tau_1}\).
This is achieved using the local construction done in Lemma~\ref{lem:small_disks}  (see Section \ref{sec:local}), and is
formalised in the next lemma. 

In order to simplify the notation, for the collection $Q_k=\{a_j^k\}^{N_k}_{j=0}\subset \partial U_k$ of points fixed in \eqref{eq:points_Qk}, define the collection of points 
\begin{equation}\label{eq:points_bk}
	\left\{b^k_j=\frac{f^k_{k-1}(a^k_j)-\tau_k}{r_k} ,\;\; a^k_j\in Q_k \right\}.
\end{equation}

\begin{prop}\label{prop:gk}There exists an entire map $g_k\colon \C\to \C$ satisfying
	\begin{enumerate}[label=($\roman*_k$),leftmargin=0.7cm]
		\item\label{item:att_basin_k} For every $\delta\in\bb{0,1}$ and for every point $b_j^k,\; r_k \cdot g_k\bb{B\bb{b_j^k, \frac{1}{50}\delta\cdot s\cdot\rho_k}}\subset B\bb{-\tau_1,\frac{\delta^2}{\tau_{k+1}^2}}$;
		\item\label{item:err_disk} For every $z\in\overline{U_{k+1}}$,
		\begin{enumerate}
			\item \label{subitm:riemann_k} 
			$\abs{r_k\cdot g_k\bb{\frac{f_{k-1}^k(z)-\tau_k}{r_k}}-\bb{\varphi_U\inv(z)+\tau_{k+1}}}<\frac1{\tau_{k+1}}.$
			\item  \label{subitm:derivative_k} if  $w\in B\bb{f_{k-1}^k(z),\frac{\rho_k}{10}}$, then   $\abs{\abs{g_k'\bb{\frac{w-\tau_k}{r_k}}}-1}<\frac{\log^3\bb{\tau_k}}{2\tau_{k}}$.
		\end{enumerate}
		\item\label{item:growth_k} $M_{g_k}\bb{\frac{\tau_{k}}3}<\frac{\exp\bb{\log^7\bb{\tau_{k}}}}{r_k}.$
	\end{enumerate}
\end{prop}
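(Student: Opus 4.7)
The strategy is to apply the Local Lemma, Lemma \ref{lem:small_disks}, on the rescaled geometry induced by $f_{k-1}$. Writing $F_{k-1}:=f_{k-1}^k$, define
\[
W_k\ :=\ \tfrac{1}{r_k}\bb{F_{k-1}(V_{k+1})-\tau_k},
\]
so that the rescaled centres $b_j^k$ of \eqref{eq:points_bk} are attached to $\partial W_k$. By inductive hypothesis \ref{itm:iterations}\ref{subitm:final}, $\abs{F_{k-1}(z)-(\varphi_U^{-1}(z)+\tau_k)}<3^k/\tau_k$ on $\overline{U_k}\supset V_{k+1}$; since $\varphi_U^{-1}(V_{k+1})=\D_{r_k}$, this rescales to $\partial W_k\subset\A\bb{\frac{1}{1+\varepsilon_k},1+\varepsilon_k}$ with $\varepsilon_k$ of order $3^k/(r_k\tau_k)$. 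Similarly, since $\varphi_U^{-1}(a_j^k)\in\partial\D_{1+\rho_k}$, the points $b_j^k$ lie in $\A\bb{1+\eta_k-\varepsilon_k,\,1+\eta_k+\varepsilon_k}$ with $\eta_k:=(1+\rho_k)/r_k-1\gtrsim \rho_k$, the latter by \eqref{cond:1}. Proposition \ref{prop:tau1}\ref{item:tau1_b} then ensures $3\varepsilon_k\log(1/\varepsilon_k)\ll\eta_k$, so a small universal $\kappa$ satisfies the Lemma's hypothesis. The separation requirement \eqref{eq:points_Qk} transfers through $\varphi_U^{-1}$ (using the bi-Lipschitz constant $C_U$ from \eqref{eq:derivative_phiU}) and the inductive error to ensure the disks $B(b_j^k,\eta_k/5)$ are pairwise disjoint, possibly after taking the separation constant of $Q_k$ slightly larger than $r_k\rho_k$ to absorb the factor $C_U$.

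Now let $\varphi_{W_k}:\D\to W_k$ denote the Riemann map normalized by $\varphi_{W_k}(0)=0$ and $\arg\varphi_{W_k}'(0)>0$, and apply Lemma \ref{lem:small_disks} with $\tau:=\tau_{k+1}/r_k$, $A:=\tau_1/r_k$, $\eta=\eta_k$, $\varepsilon=\varepsilon_k$, and the chosen $\kappa$; this produces the entire map $g_k$. Property $(i_k)$ is then immediate from \ref{item:att_basin}: for any $\delta\in(0,1)$,
\[
g_k\bb{B\bb{b_j^k,\tfrac{1}{10}\delta\cdot s\cdot\eta_k}}\subset B\bb{-A,\;E_1E_2\delta^{\frac{1}{D}e^{1/\eta_k}-1}},
\]
and since $1/\eta_k\gtrsim\log\log\tau_{k+1}$ the exponent $\frac{1}{D}e^{1/\eta_k}-1\gtrsim \log\tau_{k+1}$ together with $E_2$ overwhelm $E_1$ for $\tau_1$ large, placing the image — after multiplication by $r_k$ — inside $B(-\tau_1,\delta^2/\tau_{k+1}^2)$.

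For $(ii_k)(a)$ I would use the telescoping identity, with $w:=(F_{k-1}(z)-\tau_k)/r_k$,
\[
r_kg_k(w)-\bb{\varphi_U^{-1}(z)+\tau_{k+1}}=r_k\bb{g_k(w)-\varphi_{W_k}^{-1}(w)-\tau}+r_k\bb{\varphi_{W_k}^{-1}(w)-w}+\bb{F_{k-1}(z)-\varphi_U^{-1}(z)-\tau_k},
\]
bounding the three brackets respectively by \ref{subitm:riemann} of Lemma \ref{lem:small_disks}, by Lemma \ref{lem:riemann_approx} applied to $\varphi_{W_k}$ (available since $\abs{w}\leq 1-\eta_k<1-3\varepsilon_k\log(1/\varepsilon_k)$), and by the inductive hypothesis; condition \eqref{cond:2} then controls the totals well under $1/\tau_{k+1}$. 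Property $(ii_k)(b)$ is derived analogously from \ref{subitm:derivative}, noting that $\varphi_{W_k}'(0)/\abs{\varphi_{W_k}'(0)}=1$ by normalization; and property $(iii_k)$ follows directly from \ref{item:growth} with $R=\tau_k/3$. The main technical obstacle I anticipate is the tight bookkeeping of three independent error sources in $(ii_k)$ — the $\bar\partial$-correction from Lemma \ref{lem:small_disks}, the Warschawski-type distortion of $\varphi_{W_k}^{-1}$ from the identity, and the inductive approximation error $F_{k-1}\approx\varphi_U^{-1}+\tau_k$ — each of which must fit within the narrow budget imposed by the target bounds, so that the interplay of $\varepsilon_k$, $\eta_k$, $\kappa$, and the growth of $\tau_{k+1}$ dictated by \eqref{cond:2}--\eqref{cond:1} is precisely where the construction's rigidity lies.
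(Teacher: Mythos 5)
Your overall strategy — apply Lemma~\ref{lem:small_disks} on the rescaled domain $W_k$, translate its conclusions by $r_k$, and verify the parameter hypotheses via the inductive hypothesis and Proposition~\ref{prop:tau1} — matches the paper's, and your treatment of $(i_k)$, $(ii_k)(b)$, and $(iii_k)$ goes through essentially as you describe. However, your argument for $(ii_k)(a)$ has a genuine gap, traceable to your choice of conformal map.

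You apply Lemma~\ref{lem:small_disks} with the \emph{normalized} Riemann map $\varphi_{W_k}\colon\mathbb D\to W_k$. Lemma~\ref{lem:small_disks}\ref{subitm:riemann} then only tells you that $g_k$ approximates $\varphi_{W_k}^{-1}+\tau$, and to pass to $\varphi_U^{-1}$ your telescoping identity inserts the middle term $r_k\bigl(\varphi_{W_k}^{-1}(w)-w\bigr)$, which you propose to bound by Lemma~\ref{lem:riemann_approx}. But that Warschawski-type bound is of order $\varepsilon_k\log(1/\varepsilon_k)\approx \frac{3^{k+1}}{\tau_k}\log\tau_k$, and so is your third bracket $F_{k-1}(z)-\varphi_U^{-1}(z)-\tau_k$. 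Both are astronomically larger than the target $1/\tau_{k+1}$: by \eqref{cond:2}, $\tau_{k+1}$ grows like $\exp(\tau_k^{\alpha/7})$, so $1/\tau_{k+1}$ is exponentially small in $\tau_k$, while your error terms decay only polynomially. Condition~\eqref{cond:2} does not control these totals; it makes the discrepancy worse. Nor is there cancellation to be salvaged: the sum of your second and third brackets equals $r_k\bigl(\varphi_{W_k}^{-1}(w)-\varphi_k^{-1}(w)\bigr)$, a M\"obius discrepancy between two distinct Riemann maps of $W_k$, which is again of order $\varepsilon_k\log(1/\varepsilon_k)$.

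The paper's key move, which your proposal misses, is to \emph{not} use the normalized Riemann map but instead the specific conformal parametrization $\varphi_k(z):=\frac{1}{r_k}\bigl(f_{k-1}^k(\varphi_U(r_k z))-\tau_k\bigr)$. This is a Riemann map of $\mathbb D$ onto $W_k$ (conformality follows from the inductive hypothesis together with Proposition~\ref{prop:tau1}\ref{item:tau1_e}), and it satisfies the \emph{exact} identity
\[
\varphi_U^{-1}(z)\;=\;r_k\,\varphi_k^{-1}\!\left(\frac{f_{k-1}^k(z)-\tau_k}{r_k}\right)\qquad\text{for }z\in V_{k+1},
\]
so translating Lemma~\ref{lem:small_disks}\ref{subitm:riemann} by $r_k$ gives $(ii_k)(a)$ directly, with no Warschawski or inductive error appearing at all; the only error is the $\bar\partial$-correction $\frac{E_1}{\kappa}E_2^{1/40}$, which by~\eqref{eq:err_loc} is comfortably below $1/\tau_{k+1}^2$. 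For $(ii_k)(b)$, by contrast, the target bound $\frac{\log^3\tau_k}{2\tau_k}$ is of the same polynomial order as the Warschawski error, so there the softer Lemma~\ref{lem:riemann_approx} estimate is affordable — this is precisely why the derivative bound is so much more generous than the value bound. Your instinct that the bookkeeping is tight is right, but the essential insight for $(ii_k)(a)$ is that the conformal map must be set up so that the Warschawski-type error vanishes identically, rather than being estimated.
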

\begin{proof}
	Define the domain  $$W_k=\frac{f_{k-1}^{k}(V_{k+1})-\tau_k}{r_k}$$ and the map $\varphi_k\colon \D\to W_k$ by
	$$
	\varphi_k(z):=\frac1{r_k}\bb{f_{k-1}^k\bb{\varphi_U\bb{r_k\cdot z}}-\tau_k}.
	$$
	
	First, we claim that the function $\varphi_k$ is conformal and is a Riemann map from $\D$ onto $W_k$. To see it is onto, observe that $\varphi_U\bb{\D_{r_k}}=V_{k+1}$, implying that $\varphi_k(\D)=W_k$. To see it is conformal, note that following the induction hypothesis, 
	$$
	\abs{f_{k-1}^k(z)-\bb{\varphi_U\inv(z)+\tau_k}}<\frac{3^k}{\tau_k}.
	$$
	Following Proposition \ref{prop:tau1} part \ref{item:tau1_e} we conclude it is conformal.
	
	%
	
	For every $z\in \D$ we can write $z=\frac1{r_k}\varphi_U\inv\bb{\varphi_U\bb{r_k\cdot z}}$, and by inclusion $\varphi_U\bb{r_k\cdot z}\in V_{k+1}\subset U_k$ implying that
	\begin{align*}
		\dist(\partial\D,\partial W_k)
		&=\inf_{|\zeta|=1}\dist\bigl(\zeta,\partial W_k\bigr) 
		\le \sup_{w\in\partial V_{k+1}}
		\left|\,\frac{f_{k-1}^k(w)-\tau_k}{r_k}
		-\frac{\varphi_U^{-1}(w)}{r_k}\right| \\
		&\le \frac{1}{r_k}\;
		\sup_{w\in\partial V_{k+1}}
		\bigl|\,f_{k-1}^k(w)-\bigl(\varphi_U^{-1}(w)+\tau_k\bigr)\,\bigr|
		\;<\;\frac{1}{r_k}\cdot \frac{3^k}{\tau_k}.
	\end{align*}
	following part \ref{subitm:final} of property \ref{itm:iterations} of the function $f_{k-1}$.
	
	Similarly, the points $a_j^k$ are in $\partial U_k$, using property \ref{itm:iterations}\ref{subitm:final} of the function $f_{k-1}$ again, we have
	\begin{equation}\label{eq_distance_atrr}
		\begin{aligned}
			\abs{f_{k-1}^k\bb{a_j^k}-\tau_k}&\ge \abs{\varphi_U\inv\bb{a_j^k}}-\abs{f_{k-1}^k\bb{a_j^k}-\bb{\varphi_U\inv\bb{a_j^k}+\tau_k}}>\bb{1+\rho_k}-\frac{3^k}{\tau_k},\\
			\abs{f_{k-1}^k\bb{a_j^k}-\tau_k}&\le \abs{\varphi_U\inv\bb{a_j^k}}+\abs{f_{k-1}^k\bb{a_j^k}-\bb{\varphi_U\inv\bb{a_j^k}+\tau_k}}<\bb{1+\rho_k}+\frac{3^k}{\tau_k},
		\end{aligned}
	\end{equation}
	which implies that $\bset{b_j^k}\subset\frac{1}{r_k}\cdot \A\bb{1+\rho_k-\frac{3^k}{\tau_k},1+\rho_k+\frac{3^k}{\tau_k}}.$ Define  $\gamma_k=\frac{1+\rho_k}{r_k}-1= \frac{\rho_k-\rho_{k+1}}{2+\rho_k+\rho_{k+1}}$. We will show that
	\begin{equation}\label{eq:gammak}
		\frac{1}{5}\rho_k\leq \gamma_k\leq \frac{1}{2}\rho_k
	\end{equation}
	if $\tau_1$ is chosen large enough: the upper bound is immediate noting that \(\rho_{k+1}\le\rho_k\) and \(2+\rho_k+\rho_{k+1}\ge2\). For the lower bound, note that since $\rho_k\searrow 0$, we may choose $\tau_1$ so that $\rho_k<\rho_1<\frac 1{4}$. Using condition \eqref{cond:1} on the sequence $\bset{\tau_k}$ we see that
	\begin{align*}
		\frac{\rho_{k+1}}{2\rho_k}=\frac{\log\log(\tau_{k+1})}{2\log\log(\tau_{k+2})}\le\frac14
		\quad\Longrightarrow\quad\gamma_k= \frac{\rho_k-\rho_{k+1}}{2+\rho_k+\rho_{k+1}}=\rho_k\cdot \frac{\frac12-\frac{\rho_{k+1}}{2\rho_k}}{1+\frac{\rho_k}2+\frac{\rho_{k+1}}2}\ge \rho_k\cdot\frac{\frac14}{1+\frac1{4}}=\frac{\rho_k}5.
	\end{align*}
	In particular, 
	$$
	\bset{b_j^k}\subset \A\bb{1+\gamma_k-\frac{3^k}{r_k\tau_k},1+\gamma_k+\frac{3^k}{r_k\tau_k}}.
	$$
	Using hypothesis \ref{itm:iterations}\ref{subitm:final} for every $j\neq \ell$ we see that
	\begin{align}\label{eq:bk_separation}
		\abs{b_j^k-b^k_{\ell}}&=\frac{1}{r_k}\abs{f_{k-1}^k\bb{a_j^k}-f_{k-1}^k\bb{a_\ell^k}}\\
		&\ge \frac{1}{r_k}\bb{\abs{\varphi_U\inv\bb{a_j^k}-\varphi_U\inv(a_\ell^k)}-\abs{f_{k-1}^{k}(a_j^k)-\bb{\varphi_U\inv(a_j^k)+\tau_{k}}}-\abs{f_{k-1}^{k}(a_\ell^k)-\bb{\varphi_U\inv(a_\ell^k)+\tau_{k}}}}\nonumber\\
		&\ge \frac{1}{r_k}\abs{\varphi_U\inv\bb{a_j^k}-\varphi_U\inv(a_\ell^k)}-2\cdot \frac{3^k}{r_k\tau_k}>C_U\inv\cdot \rho_k-2\cdot \frac{3^k}{r_k\tau_k}\ge\frac{C_U\inv}2\rho_k\ge\frac{\rho_k}4\ge\frac{\gamma_k}2\nonumber
	\end{align}
	as $\varphi_U$ is bi-Lipschitz with the assumption $\frac12\le C_U\inv$ made in \eqref{eq:derivative_phiU}, and the points $a^k_j$ are $(r_k\cdot \rho_k)$-separated (as $\frac{\rho_k}4\ge \frac{3^k}{r_k\cdot \rho_k}$ following Proposition \ref{prop:tau1} part \ref{item:tau1_b}).
	
	Apply Lemma~\ref{lem:small_disks} with the conformal map $\varphi_k:\D\to W_k$, the centers $\{b_j^k\}$, and the choices of parameters
	\[
	\kappa=\tfrac16,\quad
	\eta=\gamma_k,\quad
	A=-\tfrac{\tau_1}{r_k},\quad
	\tau=\tfrac{\tau_{k+1}}{r_k},\quad
	\varphi=\varphi_k:\D\to W_k,\quad
	\varepsilon=\tfrac{3^k}{r_k\tau_k},\quad
	\{b_j\}=\{b_j^k\}.
	\]
	Note that following Proposition \ref{prop:tau1} part \ref{item:tau1_b}
	$$
	3\eps\log\bb{\frac1\eps}=3\tfrac{3^k}{r_k\tau_k}\log\bb{\tfrac{r_k\tau_k}{3^k}}\le\frac{3^{k+1}}{\tau_k}\log\bb{\tau_k}<\rho_k.
	$$
	In addition, using \eqref{eq:bk_separation}, \eqref{eq:gammak}, and \eqref{eq_distance_atrr} (enlarging $\tau_1$ if needed), the separation, smallness, and size hypotheses of the lemma are satisfied. Note that with this choice of parameters, 
	\begin{align}\label{eq:err_loc}
		&E_1:=\frac{D\cdot\tau}{\eta^{\frac32}\sqrt\kappa}=\frac{D\cdot\tfrac{\tau_{k+1}}{r_k}}{\gamma_k^{\frac32}\sqrt{\tfrac16}}\le\frac{6D\tau_{k+1}}{\rho_k^{\frac32}};\quad\quad\quad\quad
		E_2:=\exp\bb{-\frac{\kappa}2\cdot e^{\frac1\eta-\frac\kappa2}}=\exp\bb{-\frac1{12}\cdot e^{\frac1{\gamma_k}-\frac1{12}}}\le \exp\bb{-\frac{e^{\frac2{\rho_k}}}{20}}\nonumber\\
		&\Rightarrow E_1\cdot E_2^{\frac1{40}}\le \frac{6D\tau_{k+1}}{\rho_k^{\frac32}}\cdot \exp\bb{-\frac{e^{2\log\log\tau_{k+1}}}{800}}\le \tau_{k+1}^2\exp\bb{-\log^{\frac32}(\tau_{k+1)}}\le\frac1{36\tau_{k+1}^2},
	\end{align}
	since $\gamma_k\le\frac{\rho_k}2$, if we choose $\tau_1$ numerically large enough.
	
	Let $g_k$ be the entire function that the Lemma \ref{lem:small_disks} provides. Let us check that it satisfies the properties in the statement using the properties of $g_k$ stated in Lemma \ref{lem:small_disks}:
	
	To see \ref{item:att_basin_k}, let $\delta\in\bb{0,1}$. Then, using \eqref{eq:gammak}, for every $j$ and every $z\in B\bb{b_j^k, \delta\cdot \frac{1}{50}s\cdot\rho_k}$,
	$$
	\abs{r_k\cdot g_k(z)-\tau_1}=r_k\cdot \abs{g_k(z)-A}<r_k\cdot E_1\cdot E_2\cdot \delta^{\frac 1{D}e^{\frac1{\gamma_k}}-1}\le\frac{\delta^2}{\tau_{k+1}^2},
	$$
	choosing $\tau_1$ large enough so that $\frac 1{D}e^{\frac1{\gamma_1}}>3$, as desired.	
	
	The second property in the statement of Lemma \ref{lem:small_disks} holds for points such that $\vert z \vert <1-\frac{1}{6}\gamma_k$. In order to prove~\ref{item:err_disk},  first we show that for any $z\in\overline{U_{k+1}}$ and $w\in B\bb{f_{k-1}^k(z),\frac{\rho_k}{10}}$, using hypothesis \ref{itm:iterations}\ref{subitm:final} and  \eqref{eq:gammak}, 
	\begin{align*}
		\abs{\frac{w-\tau_k}{r_k}}&\le \abs{f_{k-1}^k\bb{z}-w}+\abs{\frac{f_{k-1}^k\bb{z}-\tau_k}{r_k}}\le \frac{\rho_k}{10}+\frac{\abs{\varphi_U\inv(z)}+\abs{f_{k-1}^k\bb{z}-\bb{\varphi_U\inv(z)+\tau_k}}}{r_k}\\
		&\le \frac{\rho_k}{10}+\frac{1+\rho_{k+1}+\frac{3^k}{\tau_k}}{r_k}=1+\frac{\rho_k}{10}-\frac{r_k-\bb{1+\rho_{k+1}}-\frac{3^k}{\tau_k}}{r_k}=1+\frac{\rho_k}{10}-\gamma_k+\frac{3^k}{r_k\tau_k}<1-\frac{\gamma_k}{2}+\frac{\gamma_k}{3}
		\ =1-\frac16\gamma_k,
	\end{align*}
	since $\frac{3^k}{r_k\tau_k}\le \frac{\rho_k-\rho_{k+1}}{6}\leq \frac{1}{3}\gamma_k$.
	
	To prove part (a) of \ref{item:err_disk}, note that for $z\in\overline{U_{k+1}}\subset V_{k+1}$,  $\frac{f_{k-1}^k(z)-\tau_k}{r_k}\in W_k$ and $ \varphi_U\inv(z)=r_k\cdot\varphi_k\inv\bb{\frac{f_{k-1}^k(z)-\tau_k}{r_k}}$. Then, applying the second property of $g_k$ in Lemma \ref{lem:small_disks}, and \eqref{eq:err_loc}, we have
	\begin{align*}
		&\abs{r_k\cdot g_k\bb{\frac{f_{k-1}^k(z)-\tau_k}{r_k}}-\bb{\varphi_U\inv(z)+\tau_{k+1}}}=\\
		&=\abs{r_k\cdot g_k\bb{\frac{f_{k-1}^k(z)-\tau_k}{r_k}}-\bb{r_k\cdot \varphi_k\inv\bb{\frac{f_{k-1}^k(z)-\tau_k}{r_k}}+\tau_{k+1}}}<r_k\cdot \frac1{2\tau_{k+1}^2}<\frac{1}{\tau_{k+1}^2}.
	\end{align*} 
	
	To see part (b) of \ref{item:err_disk}, fix  $z\in\overline{U_{k+1}}$, and let $w\in B\bb{f_{k-1}^k(z),\frac{\rho_k}{10}}$. 
	Then
	\begin{align*}
		\abs{\abs{g_k'\bb{\frac{w-\tau_k}{r_k}}}-1}\le\abs{g_k'\bb{\frac{w-\tau_k}{r_k}}-\frac{\varphi_k'(0)}{\abs{\varphi_k'(0)}}}&\le \frac{36E_1\cdot E_2^{\frac1{40}}}{\gamma_k}+\frac{72}{\gamma_k}\cdot\tfrac{3^k}{r_k\tau_k}\log\bb{\tfrac{r_k\tau_k}{3^k}}\le \frac{\log^3\bb{\tau_k}}{2\tau_{k}},
	\end{align*}
	since as we saw in the proof of part \ref{item:tau1_b} in proposition \ref{prop:tau1}, $\log\log(\tau_{k+1})\le \log(\tau_{k})$ implying that
	$$
	\frac{36E_1\cdot E_2^{\frac1{40}}}{\gamma_k}+\frac{72}{\gamma_k}\cdot\tfrac{3^k}{r_k\tau_k}\log\bb{\tfrac{r_k\tau_k}{3^k}}\le 100\tfrac{3^k\cdot \log\log(\tau_{k+1})}{\tau_k}\log\bb{\tfrac{\tau_k}{3^k}}\le 100\tfrac{3^k\cdot \log^2(\tau_{k})}{\tau_k}\le \frac{\log^3(\tau_{k})}{2\tau_k}.
	$$
	
	Finally, to see \ref{item:growth_k}, by the last conclusion of Lemma \ref{lem:small_disks},
	\begin{align*}
		M_{g_k}\bb{\frac{\tau_{k}}3}&\le \tau_{k+1}+ \bb{\frac{\tau_{k}}3}^{\frac D{\gamma_k} e^{\frac1{\gamma_k}}}\leq \tau_{k+1}+\exp\bb{\frac {5D}{\rho_k} e^{\frac5{\rho_k}}\log\bb{\frac{\tau_{k}}3}}\\
		&\leq \tau_{k+1}+\exp\bb{\log^6\bb{\tau_{k+1}}\log\log\bb{\tau_{k+1}}}<\frac{\exp\bb{\log^7\bb{\tau_{k+1}}}}{r_k},
	\end{align*}
	since $\gamma_k\ge\frac{\rho_k}5$, if $\tau_1$ is numerically large enough.
\end{proof}	

We now define the model map as
\begin{equation}\label{def:hk} 
	h_k(z)=	\begin{cases}
		f_{k-1}(z),& z\in B\bb{0,\frac{10}{36}\tau_k},\\
		r_k\cdot g_k\bb{\frac{z-\tau_k}{r_k}},& z\in B\bb{\tau_k,\frac{10}{36}\tau_k},\\
		0,& \text{otherwise}.
	\end{cases}
\end{equation}

\begin{figure}[h]
	\centering
	\def\svgwidth{\textwidth}
	\begingroup%
	\makeatletter%
	\providecommand\color[2][]{%
		\errmessage{(Inkscape) Color is used for the text in Inkscape, but the package 'color.sty' is not loaded}%
		\renewcommand\color[2][]{}%
	}%
	\providecommand\transparent[1]{%
		\errmessage{(Inkscape) Transparency is used (non-zero) for the text in Inkscape, but the package 'transparent.sty' is not loaded}%
		\renewcommand\transparent[1]{}%
	}%
	\providecommand\rotatebox[2]{#2}%
	\newcommand*\fsize{\dimexpr\f@size pt\relax}%
	\newcommand*\lineheight[1]{\fontsize{\fsize}{#1\fsize}\selectfont}%
	\ifx\svgwidth\undefined%
	\setlength{\unitlength}{5102.36220472bp}%
	\ifx\svgscale\undefined%
	\relax%
	\else%
	\setlength{\unitlength}{\unitlength * \real{\svgscale}}%
	\fi%
	\else%
	\setlength{\unitlength}{\svgwidth}%
	\fi%
	\global\let\svgwidth\undefined%
	\global\let\svgscale\undefined%
	\makeatother%
	\begin{picture}(1,0.27777778)%
		\lineheight{1}%
		\setlength\tabcolsep{0pt}%
		\put(0,0){\includegraphics[width=\unitlength,page=1]{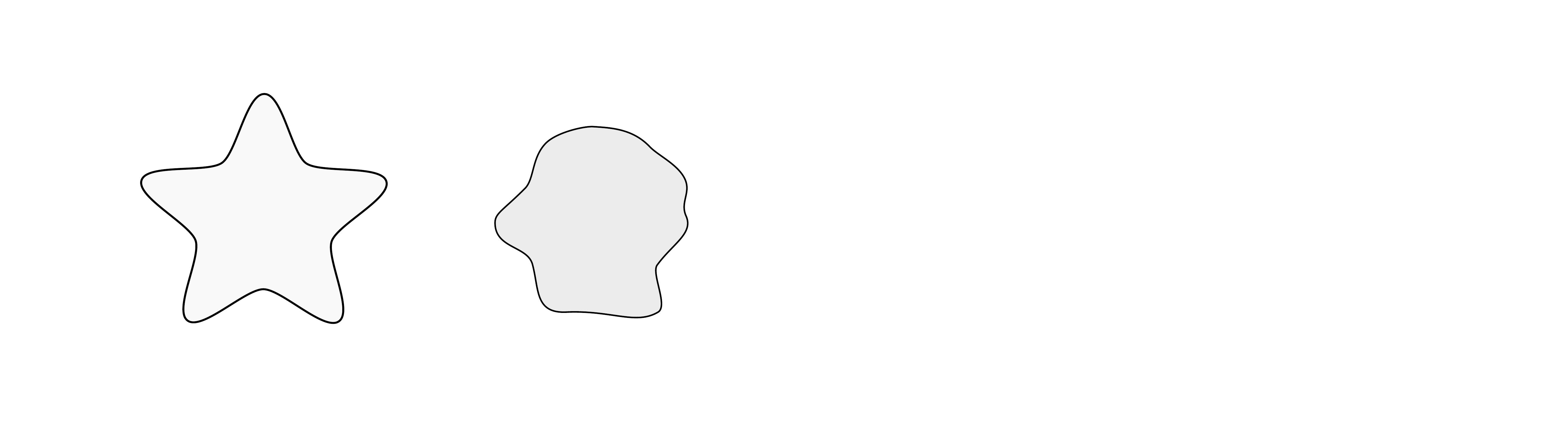}}%
		\put(0.76743421,0.12453912){\color[rgb]{0,0,0}\makebox(0,0)[lt]{\lineheight{1.25}\smash{\begin{tabular}[t]{l}$\fontsize{9pt}{1em}\mathcal{A}_4$\end{tabular}}}}%
		\put(0,0){\includegraphics[width=\unitlength,page=2]{induction.pdf}}%
		\put(0.7623061,0.12107904){\color[rgb]{0,0,0}\makebox(0,0)[lt]{\lineheight{1.25}\smash{\begin{tabular}[t]{l}$\fontsize{8pt}{1em}\tau_k$\end{tabular}}}}%
		\put(0.93572898,0.1223022){\color[rgb]{0,0,0}\makebox(0,0)[lt]{\lineheight{1.25}\smash{\begin{tabular}[t]{l}$\fontsize{8pt}{1em}\tau_{k+1}$\end{tabular}}}}%
		\put(0.18700163,0.20013508){\color[rgb]{0,0,0}\makebox(0,0)[lt]{\lineheight{1.25}\smash{\begin{tabular}[t]{l}$\fontsize{8pt}{1em}U_0$\end{tabular}}}}%
		\put(0.54019845,0.01919509){\color[rgb]{0,0,0}\makebox(0,0)[lt]{\lineheight{1.25}\smash{\begin{tabular}[t]{l}$\fontsize{8pt}{1em}B(0, \frac{10}{36}\tau_{k})$\end{tabular}}}}%
		\put(0.52004772,0.19441439){\color[rgb]{0,0,0}\makebox(0,0)[lt]{\lineheight{1.25}\smash{\begin{tabular}[t]{l}$\fontsize{8pt}{1em}f_{k-1}^{k-1}(U_{k})$\end{tabular}}}}%
		\put(0.35997365,0.2686093){\color[rgb]{0,0,0}\makebox(0,0)[lt]{\lineheight{1.25}\smash{\begin{tabular}[t]{l}$\fontsize{8pt}{1em}f_k$\end{tabular}}}}%
	\end{picture}%
	\endgroup%
	
	\caption{Schematic of the inductive step of the construction (not to scale). Inside the disk \(B\!\left(0,\tfrac{10}{36}\tau_k\right)\), the map \(f_k\) agrees closely with \(f_{k-1}\), which sends the first \(k-1\) forward images of \(U_{k+1}\) into a set close to a round disk centred at \(\tau_k\). The model map \(h_k\), that \(f_k\) approximates, sends \(\overline{U}_{k+1}\) to a disk centred at \(\tau_{k+1}\), while disks centred at the points \(f^{\,k}_{k-1}(a_j^k)\) are mapped into a neighbourhood of \((-\tau_1)\). In this step we approximate these two behaviours by implementing the two maps just described near \(0\) and near \(\tau_k\), respectively using the puncture lemma to obtain precise control near their centres (where the relevant sets lie), and then gluing them with a cut-off function, \(\chi\), for which \(\nabla\chi\) is supported in thin annuli around the corresponding disks.}\label{fig:induction}
	
\end{figure}

\paragraph*{The subharmonic function}~\\
Let $u_k\colon \mathbb{C}\to \R$ be the subharmonic function obtained by applying the puncture lemma, Lemma \ref{lem:punctures}, with the disks $ B\bb{0,\frac{\tau_k}{4}}$, $B\bb{\tau_k,\frac{\tau_k}{4}}$, and the underlying subharmonic function 
$$v\colon \mathbb{C}\to \R; \quad  v(z)=\abs z^\alpha.$$
Observe that the function $\abs z\mapsto \Delta v(z)=\frac{\alpha\abs z^\alpha}{\abs z^2}$ 
is monotone decreasing in $\abs z$, and therefore, using Lemma \ref{lem:punctures} part \eqref{item:punctures} with $\delta=\frac{16}{\tau_k}<1$,
\begin{align}\label{eq:bnd_pst}
	\underset{z\in B\bb{\tau_k,4}}\max\;u_k(z)&\le \underset{z\in B\bb{\tau_k,\frac{\tau_k}4}}\max\; u_k(z)-c\cdot \frac{\tau_k^2}{16}\cdot \bb{\underset{z\in  B\bb{\tau_k,\frac{\tau_k}4}}\inf\; \Delta u_k}\cdot \log\bb{\frac{\tau_k}{16}}\\
	&\le \bb{2\tau_k}^\alpha-c\cdot\frac{\tau_k^2}{16}\cdot\alpha\cdot \bb{2\tau_k}^{\alpha-2}\log\bb{\frac{\tau_k}{16}}\leq \bb{2\tau_k}^\alpha\bb{1-\frac{c\cdot\alpha}{16}\log\bb{\frac{\tau_k}{16}}}<-3\tau_k^\alpha,\nonumber
\end{align}
for $\tau_1$ large enough, depending on the numerical constant  $c>0$ and on $\alpha$. 

Similarly, for $\delta=\frac{8\tau_{k-1}+8}{\tau_k}$, we have
\begin{align}\label{eq:bnd_pst_2}
	\underset{z\in B\bb{0,2\tau_{k-1}+2}}\max\;u_k(z)&\le \underset{z\in B\bb{0,\frac{\tau_k}4}}\max\; u_k(z)-c\cdot \frac{\tau_k^2}{16}\cdot \bb{\underset{z\in  B\bb{0,\frac{\tau_k}4}}\inf\; \Delta u_k}\cdot \log\bb{\frac{\tau_k}{8\tau_{k-1}+8}}\\
	&\le \bb{2\tau_k}^\alpha\bb{1-\frac{c\cdot\alpha}{16}\log\bb{\frac{\tau_k}{8\tau_{k-1}+8}}}<-3\tau_k^\alpha,\nonumber
\end{align}
if $\tau_1$ is chosen to be large enough for the last inequality to hold.

\paragraph*{Bounding the integral}~\\
Denote the two `double' annuli 
$$ \mathcal{A}_1:=  \A\bb{\frac14\tau_k,\frac13\tau_k}\setminus \A\bb{\frac{10}{36}\tau_k,\frac{11}{36}\tau_k}, \quad \quad \text{ and } \quad \quad  \mathcal{A}_2=\tau_k+ \mathcal{A}_1.$$ 
By Proposition \ref{prop:chi}, we can construct a smooth map $\chi_k:\C\rightarrow[0,1]$ such that
\begin{enumerate}
	\item If $z\notin  \A\bb{\frac14\tau_k,\frac13\tau_k}\cup  \left( \tau_k +\A\bb{\frac14\tau_k,\frac13\tau_k} \right)$, then $\chi_k(z)=1$. 
	\item If $z\in \A\bb{\frac{10}{36}\tau_k,\frac{11}{36}\tau_k}\cup \left( \tau_k +\A\bb{\frac{10}{36}\tau_k,\frac{11}{36}\tau_k}\right)$, then $\chi_k(z)=0$.
	\item If $z\in \mathcal{A}_1\cup \mathcal{A}_2$, 
	then $\abs{\nabla\chi_k(z)}\leq \frac{D_0}{\tau_k}$ for some numerical constant $D_0>0$. 
\end{enumerate}
By the definition of the map $h_k$ in \eqref{def:hk}, 
$$
\abs{h_k(z)}\le	\begin{cases}
	\abs{f_{k-1}(z)},& z\in\mathcal A_1,\\
	r_k\cdot \abs{g_k\bb{\frac{z-\tau_k}{r_k}}},& z\in\mathcal A_2.
\end{cases}
$$
In addition, $\left.u_k\right|_{\mathcal A_1\cup\mathcal A_2}\equiv \abs z^{\alpha}$, and the area of $\mathcal A_1\cup\mathcal A_2$ is proportional to $\tau_k^2$. Then, using this, the bounds from hypothesis \ref{itm:growth}, and part \ref{item:growth_k} of Proposition \ref{prop:gk}, 
\begin{align*}
	\iint_\C\abs{\bar\partial\chi_k(z)\cdot h_k(z)}^2e^{-u_k(z)}dm(z)&=\iint_{\mathcal{A}_1\cup \mathcal{A}_2}\abs{\bar\partial\chi_k(z)}^2\abs{h_k(z)}^2e^{-u_k(z)}dm(z)\\
	&\le \frac{D_0^2}{\tau^2_k}\cdot m(\mathcal{A}_1\cup \mathcal{A}_2)\cdot \bb{ \underset{z\in \mathcal{A}_1}\max\; \vert f_{k-1}(z)\vert ^2 e^{-\abs z^\alpha}+  \underset{z\in \mathcal{A}_2}\max\; \abs{ r_k\cdot g_{k}\bb{\frac{z-\tau_k}{r_k}}}^2 e^{-\abs z^\alpha}}\\
	&\le D_1 \left(\tau_k^4\cdot\tau_{k-1}^6+ e^{2\log^7(\tau_{k+1})-\left(\frac{2\tau_k}3\right)^\alpha} \right)\le \tau_k^6=:\mathcal{I}^2,
\end{align*}
for some numerical constant $D_1>0$, following \eqref{eq:exp_growth}, and Condition \eqref{cond:2}, as
\begin{equation}\label{eq:bound8}
	2\log^7(\tau_{k+1})\le 2\bb{\frac1{11}\tau_{k}^{\frac\alpha7}}^7<\frac1{50}\tau_k^\alpha<\left(\frac{2\,\tau_k}{3}\right)^{\alpha}.
\end{equation}


Following Proposition \ref{prop:method}, let $f_k$ be the entire function defined by $f_k(z)=\chi_k(z)\cdot h_k(z)-\beta_k(z)$, where $\beta_k$ is H\"ormander's solution to the $\bar\partial$-equation $\bar\partial\beta_k=\bar\partial\chi_k\cdot h_k$. 


\paragraph*{Estimating the errors (property \ref{itm:consistency}):}

First, let $z\in B\bb{0,2\tau_{k-1}+1}$, then $B\bb{z,1}\subset B\bb{0,2\tau_{k-1}+2}$ and $\chi_k|_{B(z,1)}\equiv 1$. Following part \ref{item:error_bound} of Proposition \ref{prop:method}, and the bounds described in \eqref{eq:bnd_pst_2} on $u_k|_{B(z,1)}$,
$$
\abs{f_k(z)-f_{k-1}(z)}=\abs{f_k(z)-h_k(z)}\le 2\bb{1+\bb{2\tau_{k-1}+2}^2}\cdot \tau_{k}^3\cdot e^{-\frac32\cdot \tau_k^\alpha}\le \tau_k^6\cdot e^{-\frac12\tau_k^\alpha-\tau_k^\alpha}\le e^{-\tau_k^\alpha}<\frac{1}{\tau_{k+1}},
$$
where the second-to-last inequality holds if $\tau_1$ is chosen sufficiently large, and the last one follows from \eqref{cond:2}. Item  \ref{itm:consistency} in the statement of the theorem follows.

Next, we will show that the approximation is close to the model map in $B(\tau_k,2)$. While this will not be used right-away, we will need this to prove item \ref{itm:iterations}, and the technique is similar to the proof above. Every $z\in B\bb{\tau_k,2}$, satisfies $\chi_k|_{B(z,1)}\equiv 1$ since, $B\bb{z,1}\subset B\bb{\tau_k,4}$. Arguing as above and using the bound \eqref{eq:bnd_pst} on $u_k|_{B(z,1)}$, 
\begin{equation}\label{eq:error_tauk}
	\abs{f_k(z)-h_k(z)}\le 2\bb{1+\bb{\tau_k+4}^2}\cdot \tau_k^3\cdot e^{-\frac32\cdot \tau_k^\alpha}\le\tau_k^6\cdot e^{-\frac12\tau_k^\alpha-\tau_k^\alpha}\le e^{-\tau_k^\alpha}.
\end{equation}	


\paragraph*{Estimating the growth (property \ref{itm:growth}):}  Let $\abs z\ge 2\tau_k$. Applying part \ref{item:growth_bound} of Proposition~\ref{prop:method}, and noting that if $\abs z\ge 2\tau_k$, then the model map, $h_k$, is zero in $B(z,1)$ yields 
$$
\abs{f_k(z)}^2 e^{-\abs z^\alpha}\le  2\bb{\frac{2+2\abs z^2}{\sqrt\pi}}^2\cdot \tau_k^6\cdot \underset{w\in B(z,1)}\max\; e^{u_k(w)-\abs z^\alpha} \le\frac {8\cdot e}\pi\abs z^4\bb{1+\frac1{\abs z^2}}^2\cdot \tau_k^6\\
<15\abs z^2\cdot\tau_k^6
$$
as
\begin{align}\label{eq:exp_diff}
	\underset{w\in B(z,1)}\max\; e^{u_k(w)-\abs z^\alpha}\le \exp\bb{\abs z^\alpha\bb{\bb{1+\frac1{\abs z}}^\alpha-1}}\le \exp\bb{\alpha\cdot \abs z^{-\bb{1-\alpha}}}<e,
\end{align}
since by Bernoulli’s inequality $\bb{1+t}^\alpha\le 1+\alpha\cdot t$ for $\alpha\in\bb{0,1}$, applied with $t=\frac{1}{\abs z}$, concluding the proof of \ref{itm:growth}.

\paragraph*{Iterations and derivatives (properties \ref{itm:iterations} and \ref{itm:derivative}):}

The first part of \ref{itm:derivative}, namely the lower and upper bounds on the modulus of $f'_k$, is a direct consequence of the same property for $f_{k-1}$, which holds by the inductive hypothesis, \eqref{eq:derivative}, and property \ref{itm:consistency} that we already proved:
$$
\left.\abs{f'_k}\right|_{\overline{U_2}}<\left.\abs{f'_{k-1}}\right|_{\overline{U_2}}+\frac{\underset{\abs{z-w}=1}\max\abs{f_k-f_{k-1}}}1<C_U+\sumit j 1 {k-1}\frac{1}{\tau_{j+1}}+\frac{1}{\tau_{k+1}}=C_U+\sumit j 1 k\frac{1}{\tau_{j+1}},
$$
and similarly, 
\begin{equation}\label{eq:deriv_lower_U2}
	\left.\abs{f'_k}\right|_{\overline{U_2}}>\left.\abs{f'_{k-1}}\right|_{\overline{U_2}}-\frac{\underset{\abs{z-w}=1}\max\abs{f_k-f_{k-1}}}1>C^{-1}_U-\sumit j 1 {k-1}\frac{1}{\tau_{j+1}}-\frac{1}{\tau_{k+1}}>C^{-1}_U-\sumit j 1 k\frac{1}{\tau_{j+1}}>C^{-1}_U-\frac{2}{\tau_2}.
\end{equation}

Our next goal is to show the second item of property \ref{itm:iterations} and the remaining estimate of \ref{itm:derivative}. We will do so intertwined and by induction on the iterates, $\nu$.  More precisely, we will prove the following auxiliary result:
\begin{prop}
	For every $2\leq \nu\le k$,
	\begin{enumerate}  
		\renewcommand{\labelenumi}{(\arabic{enumi})}
		\item\label{item:deriv2} For every $\nu\le m\le k$, for every $w\in\partial U_m, z\in\overline{U_{m+1}}$, 
		$$
		\abs{f_{k-1}^{\nu-1}(z)-f_{k-1}^{\nu-1}(w)}\ge\prodit \ell 1 {\nu-2}\bb{1-\frac{2\log^2\bb{\tau_\ell}}{\tau_\ell}}\left(C_U^{-1}-\frac{2}{\tau_2}\right) \cdot C_U\inv\bb{\rho_{m}-\rho_{m+1}}\geq c_0 \frac{3^m}{\tau_m}>\frac{3^{m}}{\tau_{m+1}}$$
		for some universal constant $c_0>0$, if $\tau_1$ is chosen large enough. 
		\item \label{item:deriv1} \ref{itm:iterations}\ref{subitm:iterations} holds, namely, for every $z\in\overline{U_{\nu+1}}$ and $\nu-1 \le j\le k-1$, $\abs{f_k^\nu(z)-f_j^\nu(z)}<3^\nu\sumit m {j+1}{k}\frac{1}{\tau_{m+1}}<\frac{2\cdot 3^{\nu}}{\tau_{j+2}}$.
		\item\label{item:deriv3} \ref{itm:derivative}\ref{subitem:derivative_iterations} holds, namely, if $w\in\underset{z\in \overline{U_{\nu+1}}}\bigcup B\bb{f_k^{\nu}(z),\frac{\rho_{\nu}}{10}-\sumit\ell\nu k\frac{3^\ell}{\tau_{\ell+1}}}$, for some $z\in \overline{U_{\nu+1}}$ and $\nu\leq k$, then $\abs{f'_k\bb{w}-1}<\frac{\log^3\bb{\tau_\nu}}{2\tau_{\nu}}+\sumit \ell {\nu} k\frac{1}{\tau_{\ell+1}}<\frac{\log^3(\tau_\nu)}{\tau_\nu}$.
	\end{enumerate}
\end{prop}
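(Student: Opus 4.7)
The proof is by simultaneous induction on $\nu$, ranging from $2$ to $k$. The three statements are tightly coupled: (1) gives the geometric separation ensuring iterates of distinct points remain inside the balls where (3) is available; (3) furnishes the derivative control powering the chain-rule and mean-value estimates in (1) and (2); and (2) is the core approximation. At every step we have at our disposal the outer inductive hypothesis (all properties of $f_{k-1}$) and the parts of the theorem already proved for $f_k$, namely \ref{itm:consistency}, \ref{itm:growth}, and the first half of \ref{itm:derivative}.

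For the base case $\nu = 2$, item (1) is a mean-value estimate: integrate $|f_{k-1}'|$ along a curve joining $w \in \partial U_m$ to $z \in \overline{U_{m+1}}$, obtained by pulling back through $\varphi_U$ a radial segment in the annulus $\{1+\rho_{m+1} \leq |\zeta| \leq 1+\rho_m\}$; the lower bound $|f_{k-1}'| \geq C_U^{-1} - 2/\tau_2$ from \ref{itm:derivative} and the bi-Lipschitz constant $C_U$ yield the stated inequality, and the comparison with $3^m/\tau_{m+1}$ is Proposition \ref{prop:tau1} part \ref{item:tau1_b}. For (2) at $\nu = 2$, I telescope
$$f_k^2(z) - f_j^2(z) = \sum_{m=j}^{k-1}\bigl(f_{m+1}^2(z) - f_m^2(z)\bigr),$$
and split each summand as $[f_{m+1}(f_{m+1}(z)) - f_{m+1}(f_m(z))] + [f_{m+1}(f_m(z)) - f_m(f_m(z))]$; the second bracket is bounded by $1/\tau_{m+2}$ from \ref{itm:consistency} at $f_m(z)$ (which lies well inside $|\zeta| \leq 2\tau_m$), and the first by $|f_{m+1}'|$ times $1/\tau_{m+2}$, with $|f_{m+1}'|$ controlled near $f_m(z)$ via Cauchy's formula together with Proposition \ref{prop:gk} part \ref{subitm:derivative_k}. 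For (3) at $\nu = 2$, apply Cauchy's integral formula for derivatives to the approximation $f_k \approx h_k = r_k\, g_k((\cdot - \tau_k)/r_k)$ on a small disk around $w$, import the bound on $|g_k'|$ from Proposition \ref{prop:gk} part \ref{subitm:derivative_k}, and absorb the telescoping error from $f_k \to f_{k-1} \to \cdots \to f_2$.

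For the inductive step $\nu \to \nu + 1$, item (3) at $\nu$ is the workhorse. For (2), decompose
$$f_k^{\nu+1}(z) - f_j^{\nu+1}(z) = \bigl[f_k(f_k^\nu(z)) - f_k(f_j^\nu(z))\bigr] + \bigl[f_k(f_j^\nu(z)) - f_j(f_j^\nu(z))\bigr].$$
The inductive hypothesis of (2) at $\nu$ combined with Proposition \ref{prop:tau1} part \ref{item:tau1_b} places $f_j^\nu(z)$ inside the ball where (3) at $\nu$ applies, so the first bracket is at most $(1 + o(1))\,|f_k^\nu(z) - f_j^\nu(z)|$; the second telescopes as $\sum_{m=j}^{k-1}|f_{m+1} - f_m|(f_j^\nu(z)) \leq \sum_m 1/\tau_{m+2}$ via \ref{itm:consistency}. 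Summing and absorbing a factor $3$ gives $3^{\nu+1}\sum_{m=j+1}^k 1/\tau_{m+1}$. For (1) at $\nu + 1$, write $f_{k-1}^\nu(z) - f_{k-1}^\nu(w) = \int_\gamma f_{k-1}'(\zeta)\,d\zeta$ along a curve $\gamma$ from $f_{k-1}^{\nu-1}(w)$ to $f_{k-1}^{\nu-1}(z)$; by (1) and (2) at $\nu$ for $f_{k-1}$, the curve $\gamma$ lies in the ball where (3) at $\nu$ yields $|f_{k-1}'| \geq 1 - 2\log^2(\tau_\nu)/\tau_\nu$, extending the inductive product by one factor. Item (3) at $\nu + 1$ is treated exactly as in the base case, with $\nu + 1$ in place of $2$.

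The main technical obstacle is the bookkeeping required to ensure, at each inductive step, that all evaluation points (both $f_k^\nu(z)$ and $f_j^\nu(z)$, as well as points along the integration curves) remain inside the shrinking ball $B(f_k^\nu(z), \rho_\nu/10 - \sum_\ell 3^\ell/\tau_{\ell+1})$ where (3) is available. This inclusion is engineered via Proposition \ref{prop:tau1} part \ref{item:tau1_b}, which guarantees $\sum_\ell 3^\ell/\tau_{\ell+1} \ll \rho_\nu$, but must be verified at each step by invoking both (1) and (2) at the previous level. A secondary delicate point is the survival of the lower bound in (1) under iteration: the product $\prod_\ell (1 - 2\log^2(\tau_\ell)/\tau_\ell)$ must stay bounded below by a positive universal constant $c_0$, which holds because the super-exponential growth imposed by \eqref{cond:1} makes the series $\sum_\ell \log^2(\tau_\ell)/\tau_\ell$ finite.
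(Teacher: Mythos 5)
Your overall architecture---simultaneous induction on $\nu$, with (1) supplying separation, (3) supplying derivative control, and (2) being the central estimate---matches the paper's. The telescoping decomposition in (2), summing $f_{m+1}^\nu(z)-f_m^\nu(z)$ over $m$ from $j$ to $k-1$, differs from the paper's route (which peels off one step $|f_k^\nu - f_{k-1}^\nu|$ and then invokes part \ref{subitm:iterations} of \ref{itm:iterations} for $f_{k-1}$), but it is a legitimate alternative.

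However, your treatment of (3) contains a genuine gap. You propose to control $|f_k'(w)-1|$ by approximating $f_k$ by $h_k = r_k\, g_k\bigl((\cdot-\tau_k)/r_k\bigr)$ and importing the bound on $g_k'$ from Proposition~\ref{prop:gk}. This only works when $\nu=k$: by definition \eqref{def:hk}, the model map equals $r_k\,g_k((\cdot-\tau_k)/r_k)$ only on $B\bigl(\tau_k,\tfrac{10}{36}\tau_k\bigr)$. When $\nu < k$, the relevant points $w$ sit near $\tau_\nu$, well inside $B\bigl(0,\tfrac{10}{36}\tau_k\bigr)$, where $h_k \equiv f_{k-1}$; the bound $|g_k'-1| < \tfrac{\log^3\tau_k}{2\tau_k}$ is simply unavailable there, and in any case would produce a quantity far too small to account for the error $\tfrac{\log^3\tau_\nu}{2\tau_\nu}$ that actually appears. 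The correct mechanism (and the one the paper uses) is: for $\nu<k$, transfer $\sup|f_k-f_{k-1}|$ to $|f_k'-f_{k-1}'|$ via Cauchy's formula, then apply the inductive hypothesis \ref{itm:derivative}\ref{subitem:derivative_iterations} for $f_{k-1}$ at level $\nu$; the factor $\tfrac{\log^3\tau_\nu}{2\tau_\nu}$ is inherited from the first stage $f_\nu$ where $g_\nu$ is used, and the remaining $\sum_{\ell\geq\nu}\tau_{\ell+1}^{-1}$ accumulates from the telescoping. Your mention of ``telescoping $f_k\to\cdots\to f_2$'' gestures at this but only makes sense in the base case $\nu=2$ (and there the terminal map is $g_2$, not $g_k$); in the inductive step the chain must terminate at $f_\nu$ and $g_\nu$, and your claim that (3) at $\nu+1$ is ``treated exactly as the base case'' misses the case split $\nu<k$ versus $\nu=k$ that the paper makes explicit.

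A secondary, smaller issue: the lower bound in (1) of the form $|f_{k-1}(z)-f_{k-1}(w)| \geq \inf|f_{k-1}'|\cdot|z-w|$ is not automatic for holomorphic maps; one needs either near-linearity ($|f'-1|$ small on a convex set) or the specific bi-Lipschitz structure inherited from $\varphi_U^{-1}$. You invoke a curve integral, which by itself gives an upper bound, not a lower one. The paper glosses over this too, but you should be aware the lower bound requires the additional structural input from \ref{itm:iterations} (that $f_{k-1}$ is uniformly close to the bi-Lipschitz map $\varphi_U^{-1}+\tau_1$ on $\overline{U_2}$).
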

Note that the first statement, \eqref{item:deriv2}, is an auxiliary claim that will be required to prove the other two.

\begin{proof}
	{\bf The base case $\nu=2$:} To see \eqref{item:deriv2}, first note that, as we saw in the proof of \eqref{eq:gammak}, 
	$$
	\rho_m-\rho_{m+1}=\rho_m\bb{1-\frac{\rho_{m+1}}{\rho_m}}\ge \frac{\rho_m}2>\frac{3^m}{\tau_m},
	$$
	following part \ref{item:tau1_b} of Proposition \ref{prop:tau1}. Next, note that if $\tau_1\ge 10$, 
	$$
	\prod_{\ell \geq 2} \bb{1-\frac{2\log^3\bb{\tau_\ell}}{\tau_\ell}}=\exp\bb{\sumit \ell 2\infty \log\bb{1-\frac{2\log^3\bb{\tau_\ell}}{\tau_\ell}}}\ge \exp\bb{-\sumit \ell 2\infty \frac{2\log^3\bb{\tau_\ell}}{\tau_\ell}}\ge\exp\bb{-\sumit \ell 2\infty \frac{2\ell^3}{10^\ell}}\ge e^{-\frac14}>\frac34.
	$$
	Define $c_0=\frac{1}{40}\left(C_U^{-1}-\frac{2}{\tau_2}\right) \cdot C_U\inv$ and 
	choose $\tau_1$ large enough so that 
	$c_0\,\frac{3^{m}}{\tau_m} \;>\; \frac{3^{m}}{\tau_{m+1}}$ for every $m\geq 2$.
	Using these estimates and \eqref{eq:deriv_lower_U2}, for every $2\le m\le k$ and every $z\in \overline{U_{m+1}}$ and $w\in \partial U_{m}$,
	\begin{equation}\label{eq:bound_aux}
		\begin{aligned}
			\abs{f_{k-1}(z)-f_{k-1}(w)}&\ge \underset{z\in \overline{U_{2}}}\inf\abs{f'_{k-1}(z)}\abs{z-w}\ge \left(C_U^{-1}-\frac{2}{\tau_2}\right) \cdot C_U\inv\bb{\bb{1+\rho_m}-\bb{1+\rho_{m+1}}}\\
			&=\left(C_U^{-1}-\frac{2}{\tau_2}\right) \cdot C_U\inv\bb{\rho_{m}-\rho_{m+1}}>c_0\frac{3^m}{\tau_{m}}>\frac{3^{m}}{\tau_{m+1}}>\frac{1}{\tau_{k+1}}, 
		\end{aligned}
	\end{equation}
	which concludes the proof of \eqref{item:deriv2} in the case $\nu=2$. In addition, using property \ref{itm:consistency}, which we already proved, and \eqref{eq:bound_aux} for  $m=2$, we get that 
	$$
	f_k\bb{\overline{U_3}}\subset \underset{z\in \overline{U_3}}\bigcup B\bb{f_{k-1}(z),\frac1{\tau_{k+1}}}\subset f_{k-1}\bb{\overline{U_{2}}}.
	$$
	To prove \eqref{item:deriv1}, fix $z\in\overline{U_3}$, and, for now,  $j=k-1$. Since $z, f_k(z), f_{k-1}(z)\in B\bb{0,2\tau_1}\subset B\bb{0,2\tau_{k-1}}$, using \ref{itm:consistency}, and the inclusion above,
	\begin{equation}\label{eq:squared}
		\begin{aligned}
			\abs{f_k^2(z)-f_{k-1}^2(z)}&<\abs{f_k\bb{f_k(z)}-f_{k-1}\bb{f_k(z)}}+\abs{f_{k-1}\bb{f_{k}(z)}-f_{k-1}\bb{f_{k-1}(z)}}\\
			&<\frac1{\tau_{k+1}}+\underset{w\in B\bb{f_{k-1}(z),\frac{1}{\tau_{k+1}}}}\sup\;\abs {f'_{k-1}(w)}\abs {f_k(z)-f_{k-1}(z)}\\
			&<\frac1{\tau_{k+1}}\bb{1+\underset{f_{k-1}\bb{\overline{U_{2}}}}\sup\;\abs {f'_{k-1}}}<\frac1{\tau_{k+1}}\bb{2+\frac{\log^3\bb{\tau_2}}{\tau_{2}}}<\frac{3}{\tau_{k+1}},
		\end{aligned}
	\end{equation}
	where in the last line we bounded the derivative of $f_{k-1}$ using part \ref{subitem:derivative_iterations} of \ref{itm:derivative}.
	
	Next, for any $1\le j\le k-2$, we combine this with part \ref{subitm:iterations} of \ref{itm:iterations} for $f_{k-1}$:
	$$
	\abs{f_k^2(z)-f_j^2(z)}\le \abs{f_k^2(z)-f_{k-1}^2(z)}+\abs{f_{k-1}^2(z)-f_j^2(z)}<\frac{3}{\tau_{k+1}}+3\sumit m j {k-1}\frac1{\tau_{m+2}}=3\sumit m {j+1}{k}\frac{1}{\tau_{m+1}}.
	$$
	
	To see item \eqref{item:deriv3} for $\nu=2$, let $w\in B\bb{f^2_k(z),\frac{\rho_2}{10}-\sumit m 2 k\frac{3^m}{\tau_{m+1}}}$ for some $z\in\overline{U_3}$. Then, arguing similarly by using \eqref{eq:derivative}, \eqref{eq:squared}, and part \ref{subitem:derivative_iterations} of \ref{itm:derivative} for $f_{k-1}$,
	\begin{align*}
		\abs{f'_k\bb{w}-1}&\le \abs{\bb{f_k-f_{k-1}}'\bb{w}}+\underset{\zeta\in B\bb{f^2_{k-1}(z),\frac{\rho_2}{10}-\sumit m 2 k\frac{3^m}{\tau_{m+1}}+\frac1{\tau_{k+1}}}}\max\abs{f'_{k-1}\bb{\zeta}-1}\\
		&<\frac1{\tau_{k+1}}+\underset{\zeta\in \bb{f^2_{k-1}\bb{\overline{U_{2}}}}^{+\bb{\frac{\rho_2}{10}-\sumit m 2 {k-1}\frac{3^m}{\tau_{m+1}}}}}\sup \abs{f'_{k-1}\bb{\zeta}-1}<\frac{\log^3\bb{\tau_2}}{2\tau_{2}}+\sumit \ell {2} k\frac{1}{\tau_{\ell+1}}.
	\end{align*}
	This concludes the proof of the base case of the induction.
	
	{\bf The induction step:} Assume \eqref{item:deriv2}-\eqref{item:deriv3} hold for $(\nu-1)$, and we shall prove it for $\nu$.
	To see \eqref{item:deriv2}, let $\nu\leq m\leq k$. Let $z\in\overline{U_{m+1}}, w\in\partial U_m$. Then, using part \ref{subitem:derivative_iterations} of \ref{itm:derivative} for $f_{k-1}$ and $\nu-2$, and the inductive hypothesis, i.e.,  that \eqref{item:deriv2} holds for $\nu-1$, we have
	\begin{align*}
		\abs{f_{k-1}^{\nu-1}(z)-f_{k-1}^{\nu-1}(w)}&=\abs{f_{k-1}\bb{f_{k-1}^{\nu-2}(z)}-f_{k-1}\bb{f_{k-1}^{\nu-2}(w)}}\ge \underset{\zeta \in  f_{k-1}^{\nu-2}\bb{\overline{U_{\nu-1}}}}\inf \abs{f'_{k-1}(\zeta)} \abs{f_{k-1}^{\nu-2}(z)-f_{k-1}^{\nu-2}(w)}\\
		&\ge \bb{1-\frac{2\log^3{\tau_{\nu-2}}}{\tau_{\nu-2}}}\abs{f_{k-1}^{\nu-2}(z)-f_{k-1}^{\nu-2}(w)}\ge\prodit \ell 1 {\nu-2}\bb{1-\frac{2\log^3\bb{\tau_\ell}}{\tau_\ell}}C_U^{-2}\bb{\rho_m-\rho_{m+1}}>\frac{3^{m}}{\tau_{m+1}}.
	\end{align*}
	Moreover, using the inductive hypothesis, i.e., \eqref{item:deriv1} for $\nu-1$ and $j=k-1$, and the inequality we just proved for $m=\nu$,
	$$
	f_k^{\nu-1}\bb{\overline{U_{\nu+1}}}\subset \underset{z\in \overline{U_{\nu+1}}}\bigcup B\bb{f_{k-1}^{\nu-1}(z),\frac{2\cdot 3^{\nu-1}}{\tau_{k+1}}}\subset f_{k-1}^{\nu-1}\bb{\overline{U_{\nu}}}.
	$$
	To prove \eqref{item:deriv1}, fix $z\in\overline{U_{\nu+1}}$. As in the base case, we start by showing the case $j=k-1$. Since $f_k^{\nu-1}(\overline{U_{\nu+1})}\subset B(0,2\tau_{k-1})$, applying the consistency  property, property \ref{itm:consistency}, and part \ref{subitm:iterations} of property \ref{itm:iterations} for $f^{\nu-1}_{k-1}$, combined with the inclusion above we see that
	\begin{equation}\label{eq:iterates2}
		\begin{aligned}
			\abs{f_k^\nu(z)-f_{k-1}^\nu(z)}&<\abs{f_k\bb{f_k^{\nu-1}(z)}-f_{k-1}\bb{f_k^{\nu-1}(z)}}+\abs{f_{k-1}\bb{f_k^{\nu-1}(z)}-f_{k-1}\bb{f_{k-1}^{\nu-1}(z)}}\\
			&<\frac1{\tau_{k+1}}+\underset{w\in B\bb{f_{k-1}^{\nu-1}(z),\frac{2\cdot 3^{\nu-1}}{\tau_{k+1}}}}\sup\;\abs {f'_{k-1}(w)}\abs {f_k^{\nu-1}(z)-f_{k-1}^{\nu-1}(z)}\\
			&<\frac1{\tau_{k+1}}+\underset{f_{k-1}^{\nu-1}\bb{\overline{U_\nu}}}\sup\;\abs {f'_{k-1}}\cdot\frac{2\cdot 3^{\nu-1}}{\tau_{k+1}}\le\frac1{\tau_{k+1}}+ \bb{1+\frac{\log^3\bb{\tau_{\nu-1}}}{\tau_{\nu-1}}}\cdot\frac{2\cdot 3^{\nu-1}}{\tau_{k+1}}\leq 
			\frac{3^\nu}{\tau_{k+1}},
		\end{aligned}
	\end{equation}
	where the derivative of $f_{k-1}$ on $U_{\nu-1}$ is bounded by  part \ref{subitem:derivative_iterations} of property \ref{itm:derivative}. Next, for any $1\le j\le k-2$,  combining this with part \ref{subitm:iterations} of property \ref{itm:iterations} for $f_{k-1}$,
	$$
	\abs{f_k^\nu(z)-f_j^\nu(z)}\le \abs{f_k^\nu(z)-f_{k-1}^\nu(z)}+\abs{f_{k-1}^\nu(z)-f_j^\nu(z)}\le \frac{3^\nu}{\tau_{k+1}}+3^\nu\sumit m {j+1} {k-1}\frac1{\tau_{m+1}}=3^\nu\sumit m {j+1}{k}\frac{1}{\tau_{m+1}}.
	$$
	Lastly, to deduce \eqref{item:deriv3}, we divide the proof into two cases. First, suppose that $\nu<k$. Then, arguing as before, using \eqref{eq:derivative}, \eqref{eq:iterates2}, and part \ref{subitem:derivative_iterations} of property \ref{itm:derivative} for $f^{\nu}_{k-1}$, whenever $w\in B\bb{f_k^{\nu}(z),\frac{\rho_{\nu}}{10}-\sumit\ell\nu k\frac{3^\ell}{\tau_{\ell+1}}}$ for some $z\in\overline{U_{\nu+1}}$,
	\begin{align*}
		\abs{f'_k(w)-1}&\le \abs{\bb{f_k-f_{k-1}}'(w)}+\underset{\zeta\in B\bb{f_{k-1}^{\nu}(z),\frac{\rho_\nu}{10}-\sumit\ell\nu k\frac{3^\ell}{\tau_{\ell+1}}+\frac{3^{\nu}}{\tau_{k+1}}}}\max\abs{f'_{k-1}\bb{\zeta}-1}\\
		&<\frac1{\tau_{k+1}}+\underset{\zeta\in \bb{f_{k-1}^{\nu}\bb{\overline{U_{\nu+1}}}}^{+\bb{\frac{\rho_\nu}{10}-\sumit\ell\nu {k-1}\frac{3^\ell}{\tau_{\ell+1}}}}}\sup\abs{f'_{k-1}\bb{\zeta}-1}<\frac{\log^2\bb{\tau_\nu}}{2\tau_{\nu}}+\sumit \ell {\nu} k\frac{1}{\tau_{\ell+1}}.
	\end{align*}
	If $\nu=k$, item \eqref{item:deriv1} for $j=k-1$ implies that for every $z\in\overline{U_{k+1}}$,
	$f_k^k(z)\in B\bb{f_{k-1}^k(z),\frac{2\cdot 3^k}{\tau_{k+1}}}.$ 
	Combining this and part \ref{subitm:final} of property \ref{itm:iterations} for the map $f^k_{k-1}$, $f_k^k(\overline{U}_{k+1})\subset B(\tau_k,2)$ means we can then use \eqref{eq:error_tauk}.
	
	Moreover, by part (b) of property \ref{item:err_disk} in Proposition~\ref{prop:gk},  every $\zeta\in B\bb{f_{k-1}^k(z),\frac{\rho_k}{10}}$ satisfies $\abs{h_k'(\zeta)-1}<\frac{\log^3{\tau_k}}{2\tau_k}$, implying that for every $w\in B\bb{f_{k}^k(z),\frac{\rho_k}{10}-\frac{2\cdot 3^k}{\tau_{k+1}}}$,
	$$
	\abs{f'_k(w)-1}\le \abs{\bb{f_k-h_k}'(w)}+\underset{\zeta\in B\bb{f_{k-1}^{k}(z),\frac{\rho_k}{10}}}\sup\abs{h'_k\bb{\zeta}-1}<e^{-\tau_k^\alpha}+ \frac{\log^3\bb{\tau_k}}{2\tau_{k}} \leq \frac{\log^3\bb{\tau_k}}{\tau_{k}},
	$$
	concluding the proof of the proposition.
\end{proof}
Note that the proposition concludes the proof of item \ref{itm:derivative} and of parts \ref{subitm:first} and \ref{subitm:iterations} in \ref{itm:iterations}.

We are left to prove part \ref{subitm:final} of property \ref{itm:iterations}. Let $z\in\overline{U_{k+1}}$. Using  \eqref{eq:error_tauk},  that $f_k^k(z)\in B\bb{f_{k-1}^k(z),\frac{2\cdot 3^k}{\tau_{k+1}}}$, and property \ref{item:err_disk} 
of the function $g_k$ from Proposition \ref{prop:gk}, we have
\begin{align*}
	\abs{f_k^{k+1}(z)-\bb{\varphi_U\inv(z)+\tau_{k+1}}}&\le \abs{f_k\bb{f_k^k(z)}-h_k\bb{f_k^k(z)}}+\abs{h_k\bb{f_{k}^k(z)}-\bb{\varphi_U\inv(z)+\tau_{k+1}}}\\
	&\le e^{-\tau_k^\alpha}+\abs{h_k\bb{f_k^k(z)}-h_k\bb{f_{k-1}^k(z)}}+\abs{h_k\bb{f_{k-1}^k(z)}-\bb{\varphi_U\inv(z)+\tau_{k+1}}}\\
	&\le e^{-\tau_k^\alpha}+\underset{\zeta\in B\bb{f_{k-1}^{k}(z),\frac{2\cdot 3^{k}}{\tau_{k+2}}}}\sup\abs{h'_k\bb{\zeta}}\abs{f_{k-1}^k(z)-f_{k}^k(z)}+\frac1{\tau_{k+1}}\\
	&\le e^{-\tau_k^\alpha}+\left( 1+ \frac{\log^3\bb{\tau_k}}{2\tau_{k}}\right)\frac{2\cdot 3^k}{\tau_{k+1}}+\frac1{\tau_{k+1}}
	<\frac{3^{k+1}}{\tau_{k+1}},
\end{align*}
concluding the proof of property \ref{itm:iterations}.

\paragraph*{Attracting basin (property \ref{itm:attracting}):} 
By definition of the map $h_k$ in \eqref{def:hk} and part \ref{item:att_basin_k} of Proposition \ref{prop:gk}, for every $\delta\in\bb{0,1}$ for every $w\in B\bb{f^k_{k-1}(a^k_j), \frac{1}{50}\delta\cdot s\cdot\rho_k}\subset B\bb{f^k_{k-1}(a^k_j), \frac{r_k}{50}\cdot \delta\cdot s\cdot\rho_k}$, 
$$h_k(w)=r_k\cdot g_k\bb{\frac{w-\tau_k}{r_k}} \subset B\bb{-\tau_1,\frac{\delta^2}{\tau_{k+1}}}.$$ 

By this, choosing $\delta=\frac{1}{4}$, noting that $\rho_k<\frac{s}{200}$ for $\tau_1$ large enough, and using \eqref{eq:error_tauk}, 
$$
f_k \bb{B\bb{f^k_{k-1}(a^k_j),\rho_k^2}}\subset  f_k \bb{B\bb{f^k_{k-1}(a^k_j), \frac{s}{200} \cdot \rho_k}}\subset B\bb{-\tau_1,\frac{1}{\tau_{k+1}}+e^{-\tau_k^\alpha}}\subset B\bb{-\tau_1,\frac{1}{2}},
$$
concluding the proof of Theorem \ref{thm:sequence}.

\section{The limiting function: the proof of Theorem 1.1}
	Let $\bset{f_k}$ be the sequence of entire functions provided by Theorem~\ref{thm:sequence} for some fixed domain, $U$, a value of $\alpha$ as in the statement of the theorem, and some sequence $\bset{\tau_k}$ satisfying \eqref{cond:2} and \eqref{cond:1}. Note that by property \ref{itm:consistency} of the sequence $\bset{f_k}$ and as $\bset{\tau_k}\nearrow\infty$,
for every $j$ fixed sequence, $\bset{f_k}$, is a Cauchy sequence when restricted to $B(0, 2\tau_{j})$. We conclude that the sequence $\bset{f_k}$ converges locally uniformly to an entire function, $f$, as $\bigcup^\infty_{k=1}B(0, 2\tau_{k})=\C$. Moreover, for every $k\geq 2$ and $z\in B(0, 2\tau_{k-1})$, 
\begin{equation}\label{eq_error_f}
	\vert f(z)-f_{k-1}(z)\vert 	\le \sumit \ell {k} {\infty}\abs{f_{\ell}(w)-f_{\ell-1}(w)}<\sumit \ell {k} {\infty}\frac{1}{\tau_{\ell+1}}\leq \frac{2}{\tau_{k+1}}.
\end{equation}

Let $A:=B\bb{-\tau_1,\frac{3}{4}}$ and let $Q_k=\bset{a_j^k}_{j=1}^{N_k}$ be the sequence of collections of points fixed in \eqref{eq:points_Qk}, whose accumulation set as $k\to \infty$ is $\partial U$. We will show that 
\begin{equation}\label{eq:attracting}
	f(\overline{A}) \cup f^{k+1}(Q_k)\subset A \quad \text{ for all } k\geq 1.
\end{equation}
Indeed, by property \ref{itm:attracting} of the sequence $\bset{f_k}$, $f_1(\overline{A})\subset B\bb{-\tau_1, \frac{1}{\tau_1}}$, and since, by \eqref{eq_error_f}, $\vert f(z)-f_1(z)\vert< \frac{2}{\tau_1}$, the inclusion $f(\overline{A})\subset A$ follows. Let us fix some $k\geq 1$ and some point  $a_j^k\in Q_k$ for some $j$. 
Note that since $\bset{f_m}$ converges locally uniformly to $f$, then the sequence $\bset{f_m^k}$ converges locally uniformly to $f^k$, where the rate of convergence depends on $k$. Let $m\geq k$ be large enough so that $\underset{z\in B(0,2\tau_k)}\sup\;\abs{f_m^k-f^k}<\frac{\rho_k^2}2$. Then, by part \ref{subitm:iterations} of property \ref{itm:iterations} applied to $f_m$ with $\nu=j=k-1$,
$$
\abs{f_m^{k-1}\bb{a_j^k}-f^{k-1}_{k-1}\bb{a_j^k}}\le \frac{3^{k-1}}{\tau_{k+1}},
$$
as $a_j^k\in\overline{U_k}$. 	Moreover, properties \ref{itm:consistency} and part \ref{subitm:final} of property \ref{itm:iterations}, imply that $f_m^{k-1}\bb{a_j^k}\in B\bb{0,2\tau_{k-1}}$ therefore
$$
\abs{f_m\bb{f_m^{k-1}\bb{a_j^k}}-f_{k-1}\bb{f_m^{k-1}\bb{a_j^k}}}<\sumit \ell k m\frac1{\tau_{\ell+1}}<\frac2{\tau_{k+1}}.
$$
Combining this with property \ref{itm:derivative} for $f^{k-1}_{k-1}$, we see that
\begin{align*}
	\abs{f_m^k\bb{a_j^k}-f^k_{k-1}\bb{a_j^k}}	&=\abs{f_m\bb{f_m^{k-1}\bb{a_j^k}}-f_{k-1}\bb{f_{k-1}^{k-1}\bb{a_j^k}}}\\
	&	\le \abs{f_m\bb{f_m^{k-1}\bb{a_j^k}}-f_{k-1}\bb{f_m^{k-1}\bb{a_j^k}}}+\abs{f_{k-1}\bb{f_m^{k-1}\bb{a_j^k}}-f_{k-1}\bb{f_{k-1}^{k-1}\bb{a_j^k}}}\\
	&\le  \frac2{\tau_{k+1}}+\underset{w\in B\bb{f^{k-1}_{k-1}\bb{a_j^k},\frac{3^{k-1}}{\tau_{k}}}}\sup\; \abs{f_{k-1}'(w)}\abs{f_m^{k-1}\bb{a_j^k}-f^{k-1}_{k-1}\bb{a_j^k}}\le\frac2{\tau_{k+1}}+2\cdot  \frac{3^{k-1}}{\tau_{k+1}}\\
	&<\frac{4}{3}\cdot\frac{3^k}{\tau_{k+1}}
	\le \frac{4}{3}\cdot\frac{3^k}{\tau_k}\cdot\frac{\tau_k}{\tau_{k+1}}
	\le \frac{\rho_k^2}{2},
\end{align*}
\M{since} $\ \frac{3^k\log^3(\tau_k)}{\tau_k}\le \rho_k$, the last bound holds for $\tau_1$ large enough; see Proposition \ref{prop:tau1}.

Overall, we have that
$$
\abs{f^k\bb{a_j^k}-f^k_{k-1}\bb{a_j^k}}\le \abs{f^k\bb{a_j^k}-f^k_m\bb{a_j^k}}+\abs{f_m^k\bb{a_j^k}-f^k_{k-1}\bb{a_j^k}}\le \underset{z\in B(0,2\tau_k)}\sup\;\abs{f_m^k-f^k}+\frac{\rho_k^2}2<\rho_k^2.
$$
i.e., $f^k(a_j^k)\in B\bb{f_{k-1}^k(a_j^k),\rho_k^2}$.
This together with  property of the sequence $\bset{f_k}$ and \eqref{eq_error_f} yields
$$f^{k+1}(a_j^k)\in f\bb{B\bb{f^k_{k-1}\bb{a_j^k}, \rho_k^2}}\subset B\bb{-\tau_1,\frac1{2}+\frac{2}{\tau_{k+1}}}\subset A,$$
which concludes the proof of \eqref{eq:attracting}.

Equation \eqref{eq:attracting} together with Montel's theorem imply that all the points in $A \cup \bigcup_{k\geq 1} Q_k$ are contained in attracting basins of $f$, and, in particular, they must have bounded orbits.  On the other hand, property \ref{itm:iterations} of the sequence $\bset{f_k}$, combined with \eqref{eq_error_f} imply that that $f^k(U)\subset B(\tau_k, 3)$ for each $k$, and since $\tau_k\to \infty$, all points in $U$ have unbounded orbits. Consequently, any neighbourhood of any $z\in \partial U$ contains both points with bounded and unbounded orbits, which prevents the family of iterates to be normal on it. We conclude that $\partial U \subset J(f)$. Since $\tau_k>0$ for all $k$, $\bigcup_k f^k(\overline{U})$ is contained in some right half plane, and since $U$ is regular, applying Montel's theorem we obtain that $\text{int}(\overline{U})=U\subset F(f)$. Overall, as $U$ is bounded, connected, and its iterates converge to infinity, it must be a wandering Fatou component.

The fact that the iterates $f^n\vert_{U}$ are univalent follows from part \ref{subitm:final} of property \ref{itm:iterations} of the sequence $\bset{f_k}$ together with the choice of $\tau_1$ in Proposition \ref{prop:tau1}.

Finally, fix $R>2\tau_1$ and let $k\in\N$ be so that $2\tau_{k-1}< R\le2 \tau_{k}$. Then, using property \ref{itm:growth} of the sequence $\bset{f_k}$, combined with \eqref{eq_error_f} and condition \eqref{cond:2}, we see that
\begin{align}\label{eq:growth-ratef}
	\left(M_f(R)\right)^2e^{-R^\alpha}&\le  2 \underset{\abs z=2\tau_k}\max\;\abs{f_{k}(z)}^2e^{-\abs z^\alpha}+2\underset{\abs w\le 2\tau_{k}}\sup\;\abs{f(w)-f_{k}(w)}^2\le  2\cdot 15\cdot 2^4\tau_k^{10}+\frac{8}{\tau_{k+1}^2}\nonumber\\
	&\le \tau_k^{11}<\exp\bb{\tau_{k-1}^\alpha}\le \exp\bb{R^\alpha}.
\end{align}
That is, $M_f(R)\le e^{R^\alpha},$
implying that the order of $f$ is at most $\alpha$.

\begin{rmk}
	The sequence $\tau_{k+1}=\exp\bb{\tau_k^{\frac\alpha 7}}$ will give rise to an entire function of order $b$ for \underline{some} $b\in\bb{\frac\alpha7,\alpha}$. While it is possible to ensure the order is exactly $\alpha$, the additional details would have made the proof of our theorem, which is already highly technical, even longer. We therefore outline here the modifications required to obtain order exactly $\alpha$. \\
	For every $k$ we add to the construction of the subharmonic function, $u_k$, another disk, $B\bb{-\tau_k,\frac{\tau_k}4}$. In particular, a modified version of \eqref{eq:bnd_pst} holds on $B(-\tau_k,4)$, and $\left.u_k\right|_{\mathcal A_3}\equiv \abs z^\alpha$ for $\mathcal A_3:=\left\{\,z\in\mathbb{C}\colon  \frac{\tau_1}{4}<|z-\bb{-\tau_k}|<\frac{\tau_k}{3}\right\}$. We then modify the model map, $h_k$, assigning $-\exp\bb{\bb{\frac{\tau_k}{2}}^\alpha}$ on $B\bb{-\tau_k,\frac{10\tau_k}{36}}$. Estimating the integral, this  will not change it at all since $\abs{\nabla\chi_k}^2\sim\frac1{m\bb{\mathcal A_3}}$ and $\left.\abs{h_k(z)}\cdot e^{-\abs z^\alpha}\right|_{\mathcal A_3}\ll 1$. By increasing $\tau_1$ slightly, we can ensure the upper bound on the integral remains unchanged. In addition, following Proposition \ref{prop:method} combined with (the modified version of) \eqref{eq:bnd_pst} we see that $\abs{f_k(-\tau_k)-\bb{-\exp\bb{\bb{\frac{\tau_k}2}^\alpha}}}<\frac1{\tau_{k+1}}$. Finally, following property \ref{itm:consistency} of the sequence $\bset{f_k}$, we see that $\abs{f(-\tau_k)}\ge \frac12\exp\bb{\bb{\frac{\tau_k}2}^\alpha}$  for all $k$. In particular
	$$
	\limitsup R\infty\frac{\log\log\bb{M_f(R)}}{\log(R)}\ge\limit k \infty \frac{\log\log\abs{f(-\tau_k)}}{\log(\tau_k)}\ge\limit k \infty \frac{\log\log\bb{\frac12\exp\bb{\bb{\frac{\tau_k}2}^\alpha}}}{\log(\tau_k)}=\alpha.
	$$
	Combining this with \eqref{eq:growth-ratef} we see that $f$ is of order $\alpha$ (mean type).
\end{rmk}

\bigskip 

\paragraph*{Acknowledgements.}
We thank Lasse Rempe, Dave Sixsmith, Phil Rippon and Gwyneth Stallard for helpful discussions and for pointing us to relevant references. Part of this work was undertaken while the authors were affiliated with Northwestern University and the University of Manchester, respectively; we gratefully acknowledge support from these institutions, including funding for a research visit to Northwestern University. The first author acknowledges support from the Golda Meir Fellowship. The second author is a Serra Húnter fellow; this work was partially supported by the project PID2023-147252NB-I00 financed by MICIU/AEI MCIN/AEI/10.13039/501100011033, FEDER, EU, by the Spanish State Research Agency, through the Severo Ochoa and María de Maeztu Program for Centers and Units of Excellence in R\&D (CEX2020-001084-M).

\bibliographystyle{alpha}
\bibliography{biblio_boundariesWD_2}

\bigskip\bigskip\bigskip\bigskip\bigskip\bigskip

\noindent A.G.: Einstein Institute of Mathematics, Edmond J. Safra Campus, The Hebrew University of Jerusalem, Givat Ram. Jerusalem, 9190401, Israel
\newline{\tt https://orcid.org/0000-0002-6957-9431}
\newline{\tt adi.glucksam@mail.huji.ac.il}

\bigskip\bigskip
\noindent L.P.S. Dept. de Matemàtiques i Informàtica, Universitat de Barcelona, Catalonia, Spain,
\newline  Centre de Recerca Matemàtica, Bellaterra, Catalonia, Spain.
\newline {\tt https://orcid.org/0000-0003-4039-5556}
\newline{\tt lpardosimon@ub.edu}

\end{document}